\DeclareFontFamily{U}{mathx}{\hyphenchar\font45}
\DeclareFontShape{U}{mathx}{m}{n}{
      <5> <6> <7> <8> <9> <10>
      <10.95> <12> <14.4> <17.28> <20.74> <24.88>
      mathx10
      }{}
\DeclareSymbolFont{mathx}{U}{mathx}{m}{n}
\DeclareMathAccent{\widecheck}      {0}{mathx}{"71}
\newcommand{\Tsch}{{Cheby\v{s}ev}} %Chebysev
\newcommand{\bmc}{\boldsymbol{c}}
\newcommand{\bse}{{\boldsymbol{e}}}
\newcommand{\bsj}{{\boldsymbol{j}}}
\newcommand{\bsv}{{\boldsymbol{v}}}
\newcommand{\bsw}{{\boldsymbol{w}}}
\newcommand{\bsx}{{\boldsymbol{x}}}
\newcommand{\bsy}{{\boldsymbol{y}}}
\newcommand{\bsQ}{{\boldsymbol{Q}}}
\newcommand{\bsmu}{{\boldsymbol{\mu}}}
\newcommand{\bsnu}{{\boldsymbol{\nu}}}
\newcommand{\bsrho}{{\boldsymbol{\rho}}}
\newcommand{\bsPsi}{{\boldsymbol{\Psi}}}
\newcommand{\bbC}{{\mathbb{C}}}
\newcommand{\bbN}{{\mathbb{N}}}
\newcommand{\bbP}{{\mathbb{P}}}
\newcommand{\bbR}{{\mathbb{R}}}
\newcommand{\bbT}{{\mathbb{T}}}
\newcommand{\C}{{\mathbb{C}}} %
\newcommand{\N}{{\mathbb{N}}} %
\newcommand{\R}{{\mathbb{R}}} %
\DeclareSymbolFont{bbold}{U}{bbold}{m}{n}
\DeclareSymbolFontAlphabet{\mathbbold}{bbold}
\newcommand{\calD}{{\mathcal{D}}}
\newcommand{\calG}{{\mathcal{G}}}
\DeclareSymbolFont{bbold}{U}{bbold}{m}{n}
\DeclareSymbolFontAlphabet{\mathbbold}{bbold}
\newcommand{\domain}{{\mathrm D}} %
\newcommand{\ii}{{\mathrm i}} %
\newcommand{\eps}{{\varepsilon}}
\newcommand{\be}{\begin{equation}}
\newcommand{\ee}{\end{equation}}
  \providecommand*{\toclevel@author}{999}
  \providecommand*{\toclevel@title}{0}
\newcommand{\cX}{\mathcal{X}}
\newcommand{\cY}{\mathcal{Y}}
\newcommand{\cA}{\mathcal{A}}
\newcommand{\cH}{\mathcal{H}}
\newcommand{\cG}{\mathcal{G}}
\newcommand{\cF}{\mathcal{F}}
\newcommand{\cE}{\mathcal{E}}
\newcommand{\cD}{\mathcal{D}}
\newcommand{\rF}{\mathrm {F}}
\newcommand{\rS}{\mathrm {S}}
\newcommand{\cM}{{\mathcal M}} %
\newcommand{\bG}{{\boldsymbol G}}
\newcommand{\norm}[2][]{\|#2\|_{#1}}
\newcommand{\normlr}[2][]{\left\|#2\right\|_{#1}}
\newcommand{\set}[2]{\{#1 : #2\}}
\newcommand{\setlr}[2]{\left\{#1 : #2\right\}}
\newcommand{\dup}[2]{\langle #1,#2 \rangle}
\newcommand{\duplr}[2]{\left\langle #1,#2 \right\rangle}
\DeclareMathOperator{\supp}{supp}
\DeclareMathOperator*{\essinf}{ess\,inf}
\newcommand{\dd}{\;\mathrm{d}}
\newtheorem{theorem}{Theorem}[section]
\newtheorem*{acknowledgement}{Acknowledgment}
\newtheorem{corollary}[theorem]{Corollary}
\newtheorem{definition}[theorem]{Definition}
\newtheorem{example}[theorem]{Example}
\newtheorem{lemma}[theorem]{Lemma}
\newtheorem{proposition}[theorem]{Proposition}
\newtheorem{remark}[theorem]{Remark}
\newtheorem{assumption}[theorem]{Assumption}
\begin{document}
\title{Neural and spectral operator surrogates:\\ unified construction and expression rate bounds}

\author[1]{Lukas Herrmann}
\author[2]{Christoph Schwab}
\author[3]{Jakob Zech}

\affil[1]{\small Johann Radon Institute for Computational and Applied Mathematics, Austrian Academy of Sciences, Altenbergerstrasse 69, 4040 Linz, Austria
  
  \texttt{lukas.herrmann@alumni.ethz.ch}}
\affil[2]{\small Seminar for Applied Mathematics, ETH Z\"urich, R\"amistrasse 101, 8092 Zurich, Switzerland
  
  \texttt{christoph.schwab@sam.math.ethz.ch}}
\affil[3]{\small Interdisziplin\"ares Zentrum f\"ur wissenschaftliches Rechnen, Universit\"at Heidelberg, Im Neuenheimer Feld 205, 69120 Heidelberg, Germany
  
  \texttt{jakob.zech@uni-heidelberg.de}}

\maketitle

\abstract{ Approximation rates are analyzed for deep surrogates of
  maps between infinite-dimensional function spaces, arising e.g. as
  data-to-solution maps of linear and nonlinear partial differential
  equations.  Specifically, we study approximation rates for
  \emph{Deep Neural Operator} and \emph{Generalized Polynomial Chaos
    (gpc) Operator} surrogates for nonlinear, holomorphic maps between
  infinite-dimensional, separable Hilbert spaces.  Operator in- and
  outputs from function spaces are assumed to be parametrized by
  stable, affine representation systems.  Admissible representation
  systems comprise orthonormal bases, Riesz bases or suitable tight
  frames of the spaces under consideration.  Algebraic expression rate
  bounds are established for both, deep neural and {spectral} operator
  surrogates acting in scales of separable Hilbert spaces containing
  domain and range of the map to be expressed, with finite Sobolev or
  Besov regularity.
  We illustrate the abstract concepts by expression rate bounds for
  the coefficient-to-solution map for a linear elliptic PDE on the torus.
}
\\

\noindent
{\bf Key words:} Neural Networks, generalized polynomial chaos, 
operator learning
\section{Introduction}
\label{sec:intro}
In recent years, deep learning (DL) based numerical methods have started to
impact the numerical solution of (parametric) partial
differential equations (PDEs) at every stage of the solution
process. Deep Neural Networks (DNNs) have been promoted as efficient
approximation architectures for PDE solutions and parametric PDE
  response manifolds. 
However, the theoretical understanding of
the methodology remains underdeveloped; 
quoting
\cite[Sec.~4.1.1]{AdlOktem17}: ``applying deep learning to infinite
dimensional spaces is associated with a number of fundamental
questions regarding convergence..., if it converges, in what sense?''
\subsection{Existing results}
\label{sec:PrevRes}
Recent successful examples of deployment of DL surrogates in numerical
PDE solution algorithms
(e.g. \cite{KarnPINN_JCP2019,TripBiliJCP2018,KarnBIPDNN_JCP2019,HZE19}),
has promoted \emph{neural network approximation architectures for
  PDE solution approximation}.  
The related question of approximation
rates of NN based PDE discretizations has been answered in a number of 
settings.  
We mention only
\cite{OPS20_2738,MOPS22_2877,Xu_2020,LODSZ22_991,BGKP19,berner2021modern}
and the references there.  
These approximation rate results were
developed in function spaces on finite-dimensional domains.

A more recent and distinct development addresses %
\emph{neural networks for operator learning}, i.e. for the neural
network emulation of maps between infinite dimensional function
spaces, such as solution operators of PDEs, coefficient-to-solution
and shape-to-solution maps for elliptic PDEs, to name but a few.
These have been promulgated under the acronym ``Neural operators'',
``O-Nets'', ``operator learning'', see, e.g.,
\cite{lu2021comprehensive,li2021physicsinformed} and the references
there.  
Versions of the \emph{DNN universal approximation theorems for
  operator learning} have been established, following the pioneering
work \cite{ChenChen1993}, recently in
\cite{yu2021arbitrarydepth,deng2021convergence,li2021physicsinformed,lanthaler2021error,acciaio2022metric}.
In these references, generic approximation properties of operators for
several architectures of DNNs of in principle arbitrary large depth
and width have been established.  These results are analogous to the
early, universal approximation results of DNNs for function
approximation from the 90s of the previous century, as
e.g., \cite{hornik1990universal} and \cite{pinkus1999approximation} and
the references there.

Contrary to the mentioned universality
theorems, proofs of \emph{operator expression rate bounds} tend to be
problem-specific, including assumptions on regularity of input and
output data of the operators of interest, and some structural
assumptions on the operator mappings.  With domains and/or ranges of
the operators of interest being infinite-dimensional, as a rule,
overcoming the \emph{curse of dimensionality} (CoD) in the proofs of
operator emulation bounds is necessary.  
The results in the present paper leverage progress in recent years
on approximation rate bounds for gpc representations
of such maps for DNN approximation.  This line of research was
initiated in \cite{SchwabZechAA}, building on earlier results on gpc
emulation rates in \cite{BCDS17_2452} and the references there.
{We also mention recent publications where operator expression rate bounds
    have been proved for particular en- and decoding systems, such as KL-expansions
\cite{LanthPCA}.
}

For coefficient-to-solution maps of elliptic PDEs, \emph{on domains
  consisting of smooth coefficient functions}, it was shown recently
in \cite{MS21_984} that \emph{exponential expression rates of solution
  operators} is possible with certain deep ReLU NNs.  This result
leverages the exponential encoding and decoding of smooth (analytic)
functions with tensorized polynomials by spectral collocation,
combined with a ReLU NN which emulates a Gaussian Elimination Method
for regular matrices of size $N$ in NN size $O(N^4)$, and scaling
polylogarithmically in terms of the target accuracy $\eps$ of the
solution vector.
\subsection{Contributions}
\label{sec:Contr}
We establish expression rate bounds for DNN emulations of holomorphic
maps between subsets of (scales of) infinite-dimensional spaces $\cX$
and $\cY$.  We focus in particular on maps between scales
$\{\cX^s\}_{s\geq 0}$ and $\{\cY^t\}_{t\geq 0}$ of spaces of finite
regularity, such as function spaces of Sobolev- or Besov-type on
``physical domains'', being for example bounded subsets $\domain$ of
Euclidean space.  Mappings between function spaces on manifolds $\cM$
are also covered by the present operator expression rate bounds, upon
introduction of stable bases in suitable function spaces on $\cM$, or
also on space-time cylinders $\cM \times [0,T]$.

A typical example is the (nonlinear) coefficient-to-solution map for
linear, elliptic or parabolic partial differential equations of second
order which we develop for illustration in some detail in
Section~\ref{sec:Expl} ahead.  
In this ``linear elliptic PDE'' case, 
the (nonlinear) coefficient-to-solution map is holomorphic between
suitable subsets of $L^\infty(\domain)$ (accounting for positivity)
and $H^1(\domain)$ (accounting for homogeneous essential boundary
conditions on $\partial\domain$).  The notion of operator holomorphy
requires complexifications of the domain and range spaces, in the
(common) case that physical modelling will initially comprise only
spaces of real-valued functions in $\domain$.  To simplify
technicalities of exposition, we develop the theory for separable,
real Hilbert spaces $\cX$. 
Complexificiation is then a standard
process resulting in a (canonical) extension \cite{MunozCplxB}.

Our results show that for data with finite Sobolev- or Besov
regularity, there exist operator surrogates of either deep neural
network or of generalized polynomial chaos type such that
approximation rates afforded by linear approximation schemes are
essentially preserved by the surrogate operators.  This generalizes
the recent result in \cite{MS21_984} where analyticity of inputs was
exploited in an essential fashion to the more realistic, finite
regularity setting, in rather general classes of function spaces.  In
addition, our \emph{proofs of these results are constructive} allowing
for a \emph{deterministic construction of the surrogate maps with a
  set budget of pre-defined, numerical operator queries}.  Our main
results, Theorems~\ref{thm:main} and~\ref{thm:main2}, ensure
\emph{worst case and mean square generalization error bounds for 
neural operator surrogates} thus computed.  
The algebraic operator expression rates are 
limited by the approximation rates of the
encoding and decoding operators entering into the construction of the
surrogates.  
Theorem~\ref{thm:int} then has corresponding results for the 
{spectral} operator surrogate.

We note that alternative approaches to analyzing the generalization
error of operator surrogates, such as methods from statistical
learning theory (e.g. \cite{liu2022deep,lanthaler2023error} and the references there),
deliver lower approximation rates (these results do not require
holomorphy of $\cG$, however).
\subsection{Layout of this text}
\label{sec:Layout}
In Section~\ref{sec:Setting}, we present an abstract function space
setting, in which the operators and their surrogates will be analyzed. 
We precise in particular the notion of stable bases in
smoothness scales via isomorphisms to sequence spaces, 
comprising orthonormal bases in
separable Hilbert spaces of 
Fourier- \cite{li2020fourier}, \Tsch- \cite{FanasOsele2022} and
Karhunen-Lo\`{e}ve type,
as well as biorthogonal bases of spline and wavelet type 
(see, e.g., \cite{WavNeurOp2022} and the references there). 
Examples are furnished by Sobolev and Besov spaces, %
and by reproducing kernel Hilbert spaces of covariance operators in statistics 
(e.g.\ \cite{TriebelWavelets2008} and \cite{SteinwartScovel12} 
 and the references there).

Section~\ref{sec:MainReS} gives a succinct statement of our main
results in Theorems \ref{thm:main} and \ref{thm:main2}; these theorems
provide %
expression rate bounds of Deep Neural Network operator emulations and
of generalized polynomial chaos emulations for holomorphic maps
between separable
Hilbert spaces admitting \emph{stable, biorthogonal bases}.  
There, a
key role in building appropriate encoding and decoding operators for
neural operator networks is taken by \emph{dual bases}, which must, to
some extent, be available explicitly in order to construct the
encoders and decoders.  Section~\ref{sec:proof} provides the proof of
Theorem~\ref{thm:main} which asserts the \emph{existence} of operator
emulations which preserve algebraic encoding and decoding error bounds
of input and output data.

In Section~\ref{sec:int}, we discuss in further detail the second,
novel class of operator emulations dubbed \emph{spectral} or
\emph{generalized polynomial chaos operator (``gpc operator'', {or ``spectral operator''}) surrogates}. 
Its \emph{deterministic construction} 
is via finitely truncated gpc expansions of holomorphic
parametric maps, resulting from suitable encoders and decoders in the
domain and range, respectively.  
Sparse gpc operator surrogates
provide a construction (via ``stochastic collocation'') for operator
emulations, %
i.e., the proof of Theorem~\ref{thm:int} on gpc operator expression
rates yields an explicit deterministic construction procedure
realizing the proposed operator emulations.

Finally, in Section~\ref{sec:Expl} we illustrate the abstract theory with
an example.
	\subsection{Notation}
	\label{sec:Not}
        We write $\N = \{1,2,...\}$ and $\N_0 = \{0,1,2,...\}$. 
        Throughout, $C \lesssim D$ means that $C$
          can be bounded by a multiple of $D$, independently of
          parameters which $C$ and $D$ may depend on.
          $C \gtrsim D$ is defined as $D \lesssim C$, and 
          $C\simeq D$ as $C\lesssim D$ and $C \gtrsim D$.

          For a {real} Hilbert space $\cX$, 
          the inner product of $v$ and $w\in\cX$ is denoted by $\dup{v}{w}$.  
          The space of real-valued, square {summable} sequences indexed over $\N$ 
          is denoted by $\ell^2(\N)$. 
          Complex valued, square summable sequences shall be denoted with $\ell^2(\N,\C)$. {The (unique, cf. e.g. \cite{MunozCplxB}) complexification of a real Hilbert space $\cH$ is denoted by $\cH_\C$.}
	\section{Setting}
	\label{sec:Setting}

	We fix notation and introduce, following established practice
	in statistical learning theory, encoder and decoder operators 
	as stipulated in \cite{ChenChen1993,lanthaler2021error}.
        Throughout, $\cX$, $\cY$ {shall denote} separable Hilbert spaces over $\R$.

	\subsection{Framework}
	\label{sec:AbsFrmWk}
        ``Operator learning'' refers, in the present paper, to
        procedures of emulation of (not necessarily linear) maps
        $\cG:\cX\to\cY$.
        We will address \emph{existence and error
          bounds of surrogate maps $\tilde{\cG}$}, subject to a finite
        number $N$ of parameters defining $\tilde{\cG}$. 
        If we wish to
        emphasize the dependence on $N$, we also write
        $\tilde\cG_N=\tilde\cG$.
        
        On suitable %
        subsets $S\subseteq\cX$ of admissible
        input data, we will consider convergence rates in $N$ either
        in terms of the \emph{worst case error}
	\be\label{eq:G-tG} \sup_{a\in S} \| \cG(a) - \tilde{\cG}_N(a)
        \|_{\cY} \ee or the \emph{mean square error}
        \be\label{eq:G-tG2} \left( \int_{a \in S} \| \cG(a) -
          \tilde{\cG}_N(a) \|_{\cY}^2 \dd\zeta(a) \right)^{1/2}, \ee for a
        measure $\zeta$ on $S$ equipped with the Borel sigma algebra.
        
	As in \cite{li2021physicsinformed,deng2021convergence} and the
        references there, we seek surrogates $\tilde{\cG}_N$
        of the form
	\be\label{eq:GenForm} 
        \tilde{\cG}_N := \cD \circ \tilde{\bG}_N \circ \cE, 
        \ee
	where $\cE: \cX\to \ell^2(\bbN)$ and $\cD: \ell^2(\bbN) \to
        \cY$ denote the so-called \emph{encoder} and \emph{decoder}
        maps.  The encoder allows to express elements in $\cX$ in a
        certain (efficient) representation system.  For example, $\cE$
        could map vectors in $\cX$ to their Fourier coefficients
        w.r.t.\ some fixed orthonormal basis in $\cX$, and $\cD$ could
        perform the opposite operation w.r.t.\ another fixed
        orthonormal basis in $\cY$. While we restrict ourselves to
        linear encoders/decoders, we will give a more general
        framework and further details in the next subsections.  
        The parametric approximations
        $\tilde{\bG}_N : \ell^2(\bbN)\to \ell^2(\bbN)$ in
        \eqref{eq:GenForm} belong to hypothesis classes comprising
        $N$-term polynomial chaos expansions or deep neural networks
        depending on $N$ parameters.
\subsection{Representation systems}
\label{sec:BiOrth}
We discuss several representation systems and corresponding pairs
  of encoder/decoder maps. Specifically, we will admit (possibly
  redundant) biorthogonal systems such as Riesz bases realized by
  Multiresolution Analyses (MRAs) or by 
  {Finite Element frames (e.g.\ \cite{HSS08}) or 
  more general directional representation systems \cite{GK14} 
  comprising wavelet, shearlet (e.g.\ \cite{PhGShearlFr11}) and curvelet frames,}
  which offer additional flexibility over mere orthogonal bases. This framework
  also comprises Karhunen-Lo\`eve eigenfunctions, Fourier bases and
  other, fully orthogonal families as particular cases. 
  See e.g. \cite[Chap. 2,5,6.3]{TriebelWavelets2008} for 
  general discussion and constructions.

  \subsubsection{Frames}
  \label{sec:Frames}
  Constructions of concrete representation systems are often
  simplified when one insists on stability, but the basis property is
  relaxed.  This leads to the concept of \emph{frames}, which we now
  shortly recall.  It comprises biorthogonal wavelet bases as a
  particular case, and allows in particular also iterative realization
  on unstructured simplicial partitions of polyhedra via the so-called
  BPX multi-level iteration {(see, e.g., \cite{HSS08}, and the orginal
  construction due to P.\ Oswald in space dimension $d=2$ \cite{OswaldBPXP12d},
  and subsequently in polyhedra, in \cite{OswaldML94} and the references
  there. A frame property in $H^1_0(\domain)$ 
  of the subspace splittings furnished by the BPX iteration is also implicitly
  shown in \cite{BPWX}).}

  \begin{definition}
  A collection $\bsPsi = \set{\psi_j}{j\in\N}\subset
  \cX$ %
  is called a \emph{frame for $\cX$}, if the \emph{analysis operator}
$$
\rF: \cX \to \ell^2(\N): v\mapsto (\dup{v}{\psi_j})_{j\in\N}
$$
is boundedly invertible between $\cX$ and ${\rm range}(\rF)\subset \ell^2(\N)$.
\end{definition}
The adjoint $\rF'$ of the analysis operator is called the
\emph{synthesis operator}. 
It is given by
\begin{equation}\label{eq:calFp}
\rF': \ell^2(\N) \to \cX: \bsv \mapsto \bsv^\top\bsPsi \;.
\end{equation}
The \emph{numerical stability of frames} 
is quantified by the \emph{frame bounds}
\be \label{eq:FrBd} 
\lambda_{\bsPsi} := \inf_{0\ne v \in \cX} \frac{\| \rF v \|_{\ell^2}}{\| v \|_\cX} \;,
\quad
\Lambda_{\bsPsi} := \| \rF \|_{\cX\to \ell^2} = \sup_{0\ne v \in \cX}  \frac{\| \rF v \|_{\ell^2}}{\| v \|_\cX} \;.
\ee
{
\emph{Parseval frames} are frames $\bsPsi$ with ideal conditioning 
$\lambda_{\bsPsi} = \Lambda_{\bsPsi} = 1$.}
\begin{remark}\label{rmk:FrBd}
    Since $\|\rF'\|_{\ell^2\to\cX}=\|\rF\|_{\cX\to\ell^2}$, 
    \eqref{eq:FrBd} implies that for all $\bsnu\in\ell^2(\N)$
    \begin{equation}\label{eq:rmk:FrBd}
      \normlr[\cX]{\sum_{j\in\N}\nu_j\psi_j}^2
      =\norm[\cX]{\rF'\bsnu}^2
      \le \Lambda_{\bsPsi}^2\sum_{j\in\N}\nu_j^2 
       = \Lambda_{\bsPsi}^2 \norm[\ell_2]{\bsv}^2.
    \end{equation}
\end{remark}

The \emph{frame operator} $\rS := \rF'\rF: \cX \to \cX$ is 
boundedly invertible, self-adjoint and positive
(e.g. \cite{Christensen,Heil}) 
with
$\| \rF' \rF \|_{\cX\to \cX} = \Lambda_{\bsPsi}^2$ 
and
$\| (\rF' \rF)^{-1} \|_{\cX\to \cX} = \lambda_{\bsPsi}^{-2}$,
\cite[Lemma 5.1.5]{Christensen}.

{
With the pseudoinverse $(\rF')^\dagger = \rF(\rF'\rF)^{-1}$ 
of the synthesis operator, given by 
$$
(\rF')^\dagger: \cX \to \ell^2(\N): f \mapsto \{ \langle f, S^{-1} \psi_j \rangle \}_{j\in \N},
$$
the \emph{frame decomposition theorem} asserts that
every $f\in \cX$ can be uniquely and stably reconstructed
from a corresponding sequence of frame coefficients via
$$
f = \rF'(\rF')^\dagger f 
= \sum_{j\in \N} \langle f,S^{-1}\psi_j \rangle \psi_j
= \sum_{j\in \N} \langle f,\psi_j\rangle S^{-1} \psi_j 
\;.
$$
This highlights the importance of
}
the collection
$\tilde{\bsPsi}:= \rS^{-1}\bsPsi$. It
is a frame for $\cX$ which is referred to 
as the \emph{canonical dual frame} of $\bsPsi$.  
Its analysis operator is
$\tilde{\rF} := {(\rF')^\dagger = } \rF (\rF'\rF)^{-1}$,
and its frame bounds
\eqref{eq:FrBd} are $\lambda_{\bsPsi}^{-1}$ and
$\Lambda_{\bsPsi}^{-1}$, respectively.

{The frame decomposition theorem takes the form (e.g.} \cite{Christensen,Heil})
\begin{equation}\label{eq:compid}
  \rF' \tilde{\rF} = I\qquad\text{on}\qquad \cX.
\end{equation}
Whence every $v\in \cX$ has a representation 
$v = \bsv^\top\bsPsi$ with $\bsv = \tilde\rF(v)\in \ell^2(\N)$, 
and
\be\label{eq:FrStab} 
\Lambda_\bsPsi^{-1} \leq \frac{\| \bsv
\|_{\ell^2}}{\|v\|_\cX} \leq \lambda_{\bsPsi}^{-1} \;.  
\ee
Property \eqref{eq:FrStab} is equivalent to $\bsPsi$ 
being a frame for $\cX$ (see, e.g., \cite[Thm.~8.29 (b)]{Heil}).  

{
\begin{remark}\label{rmk:CDE} [Continuous-discrete equivalence]
Given a (generally nonlinear) map $\cG:\cX\to\cY$ 
between separable Hilbert spaces $\cX$ and $\cY$ over $\R$
which are endowed with frames $\bsPsi_\cX$ and $\bsPsi_\cY$, respectively,
there exists a \emph{coordinate map} 
$\bG = \bG(\bsPsi_\cX, \bsPsi_\cY):\ell_2(\N) \to \ell_2(\N)$ 
such that there holds %
\begin{equation}\label{eq:CDE}
\cG 
= \rF'_\cY \circ \bG \circ (\rF'_\cX)^\dagger 
= \rF'_\cY \circ \bG \circ \tilde{\rF}_\cX
\;.
\end{equation}
Here and throughout, 
in case that the representation systems $\bsPsi_\cX$ and $\bsPsi_\cY$ 
are clear from the context, 
we write $\bG$ in place of $\bG(\bsPsi_\cX, \bsPsi_\cY)$.
\end{remark}
}
\begin{remark} \label{rmk:FramKer}
[Uniqueness of coordinate sequence]
Unless
$\bsPsi$ is a Riesz basis (see below), 
representation of $v\in \cX$ as
$v = \bsv^\top\bsPsi$ is generally not unique: 
there holds
$\ell^2(\N) = {\rm ran}(\rF) \oplus^\perp {\rm ker}(\rF')$ 
and
$\bsQ := \tilde{\rF} \rF'$ is the orthoprojector onto
${\rm ran}(\rF)$.  
We refer to \cite{Christensen,Heil} and the references there.
\end{remark}
\subsubsection{Riesz Bases}
\label{sec:RieszB}
Riesz bases are special cases of frames
which are defined as follows {(e.g. \cite{Christensen,Heil}).}
\begin{definition}\label{def:RieszB}
  A sequence $\bsPsi = \{ \psi_j \}_{j\in \N}\subset \cX$ is a
  Riesz basis of $\cX$ if there exists a bounded bijective operator
  $A:\cX\to\cX$ and an orthonormal basis $(e_j)_{j\in\N}$ such that
  $\psi_j=Ae_j$ for all $j\in\N$.
\end{definition}

Every Riesz basis of $\cX$ is a frame:
there exist \emph{Riesz constants}
$0< \lambda_\bsPsi \leq \Lambda_\bsPsi <\infty$ 
such that
\begin{equation}\label{eq:StabRB}
\forall (c_j)_{j\in\N}\in\ell^2(\N): 
\quad 
{\lambda_\bsPsi^2} \sum_{j\in\N} | c_j |^2 
\leq 
\left\|\sum_{j\in\N} c_j \psi_j \right\|_{\cX}^2 
\leq 
{\Lambda_\bsPsi^2} \sum_{j\in\N} | c_j |^2 
\;.
\end{equation}
{
There holds a continuous-discrete equivalence:
any $v\in \cX$ can be \emph{equivalently represented}
by the sequence 
$\bmc = (c_j)_{j\in \N}$ of its coefficients w.r. to $\bsPsi$.
}

The canonical dual frame 
$\tilde\bsPsi = \{\tilde\psi_j\}_{j\in\N}$ of $\bsPsi$ 
is also a Riesz basis of $\cX$, and is referred to as the
the \emph{dual basis} or the \emph{biorthogonal system} to $\bsPsi$, 
since for all $j$, and all $k\in\N$ holds $\dup{\psi_j}{\tilde\psi_k}=\delta_{kj}$. 
We refer to \cite[Sec.~5]{Christensen} for further details and proofs.
\begin{remark}\label{rmk:RBConstr}
Constructions of piecewise polynomial Riesz bases for
Sobolev spaces in polytopal domains $\domain\subset \bbR^d$
are available
(e.g. \cite{DavydovStevenson2006,RobSurvey09} and the references there). 
{
We mention in particular \cite{DahmStevEBE} where 
locally supported wavelet bases for $C^0$-Lagrange finite element spaces on regular, 
simplicial triangulations of polytopal domains $\domain$ are constructed.
The constructions in \cite{DahmStevEBE} provide 
Riesz bases in $\cX = H^1(\domain)$ and $\cX = H^1_0(\domain)$.
}
\end{remark}
\subsubsection{Orthonormal Bases}
\label{sec:ONB}
Orthonormal bases (ONBs) are particular instances of frames and Riesz
bases: 
if $\bsPsi$ is an orthonormal basis of $\cX$, then $\tilde\bsPsi=\bsPsi$. 
This includes, for example, 
Fourier-bases \cite{Heil},
Daubechies - type wavelets \cite{DaubONWav} and 
orthogonal polynomials \cite{Szego3rdOrthPol}.
It also includes orthonormal bases obtained
by principal component analyses associated with a covariance operator
corresponding to a Gaussian measure on $\cX$ as commonly used in
statistical learning theory (e.g. \cite{SteinwartScovel12}).  
Such bases are generally not explicitly available, 
but may be approximately calculated in practice.
\begin{example}\label{ex:Fourier}
  Denote by $\bbT^d$ the $d$-dimensional torus.
  The Fourier basis $\bsPsi$ is an ONB
  of $\cX=L^2(\bbT^d)$. The analysis
  and synthesis operators $\rF$, $\rF'$ are in
  this case the Fourier transform and its inverse transform.
\end{example}
\subsection{Encoder and decoder}
\label{sec:EncDec}
In the following we use the notation
  \begin{equation}\label{eq:psieta}
    \bsPsi_\cX=(\psi_j)_{j\in\N},\quad
    \tilde\bsPsi_\cX=(\tilde\psi_j)_{j\in\N},\quad
    \bsPsi_\cY=(\eta_j)_{j\in\N},\quad
    \tilde\bsPsi_\cY=(\tilde\eta_j)_{j\in\N}.
  \end{equation}
to denote frames and their {canonical} dual frames of $\cX$, $\cY$ respectively.
With the corresponding analysis operators
$\rF_\cX$, $\rF_\cY$ %
the encoder/decoder pair in \eqref{eq:GenForm} %
is defined 
by the analysis and synthesis operators 
which are given by {(cf.\ \eqref{eq:CDE})}
\begin{equation}\label{eq:ED}
  \cE:={\tilde\rF}_\cX=\begin{cases}
    \cX\to \ell^2(\N)\\
    x\mapsto (\dup{x}{\tilde\psi_j})_{j\in\N},
  \end{cases}
  \qquad
  \cD:=\rF_\cY'=\begin{cases}
    \ell^2(\N)\to \cY\\
    (y_j)_{j\in\N}\mapsto \sum_{j\in\N}y_j\eta_j.
  \end{cases}  
\end{equation}
\begin{remark}
  If
  $\bsPsi_\cX$, $\bsPsi_\cY$ are Riesz bases of $\cX$, $\cY$,
  respectively, then the encoder $\cE : \cX\to\ell^2(\N)$ and
    decoder $\cD :\ell^2(\N)\to\cY$ in \eqref{eq:ED} are boundedly
  invertible operators.
\end{remark}

\begin{remark}\label{rmk:Thresh}
  Encoders and decoders with rate-optimal performance for subsets
  $\cX^s\subset \cX$ are obtained from \emph{$n$-term truncation}.  In
  the most straightforward case, \emph{linear $n$-term truncation} of
  representations $u\in \cX$ will ensure rate-optimal approximations
  for $\cX^s$ being classical Sobolev or Besov spaces with summability
  index $p\geq 2$.  It is well-known that MRAs which constitute Riesz
  bases in $\cX$ afford \emph{nonlinear encoding by coefficient
    thresholding}.  This could also be referred to as \emph{adaptive
    encoding}.  Such encoders are known to ensure rate-optimal
  approximations for a given budget of $n$ coefficients for
  considerably larger set $\cX^s \subset \cX$, comprising in
  particular Besov spaces in {bounded domains $\domain\subset \R^d$} 
  with summability indices $q\in (0,1]$ 
  (see, e.g., \cite{TriebelWavelets2008,TriebelBases2010}).
\end{remark}
\subsection{Smoothness scales}
\label{sec:Scales}
Our analysis will require subspaces of $\cX$ and $\cY$
exhibiting ``extra smoothness''. 
Typical instances are Sobolev and
Besov spaces with ``$s$-th weak derivatives bounded''.  
It is well-known, that membership in such function classes 
can be encoded via weighted summability of expansion
coefficients.  

To formalize this, for a fixed strictly positive, monotonically
decreasing {weight} sequence $\bsw=(w_j)_{j\in\N}$ such that
$\bsw^{1+\epsilon}\in\ell^{1}(\N)$ for all %
$\epsilon>0$, we introduce scales of Hilbert spaces
$\cX^s\subseteq \cX$, $\cY^t \subseteq \cY$ for $s, t\ge 0$ with
norms\footnote{All of the following remains valid if we use distinct
weight sequences $(w_{\cX,j})_{j\in\N}$ and $(w_{\cY,j})_{j\in\N}$
to define $\cX^s$, $\cY^t$ respectively.  
We refrain from doing so for simplicity of presentation.}
\begin{equation}\label{eq:XsYt}
  \norm[\cX^s]{x}^2:=\sum_{j\in\N}\dup{x}{\tilde{\psi}_{j}}^2 w_{j}^{-2s},
  \qquad
  \norm[\cY^t]{y}^2:=\sum_{j\in\N}\dup{y}{\tilde{\eta}_{j}}^2 w_{j}^{-2t}.
\end{equation}

  \begin{lemma} \label{lem:cXs}
    Let $s\ge 0$. 
    The space
    $\cX^s=\set{x\in\cX}{\norm[\cX^s]{x}<\infty}$ is a Hilbert space
    with inner product
    $\dup{x}{x'}_{\cX^s}=\sum_{j\in\N}\dup{x}{\tilde\psi_j}\dup{x'}{\tilde\psi_j}w_j^{-2s}$.
  \end{lemma}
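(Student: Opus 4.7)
The plan is to realize $\cX^s$ as isometrically isomorphic to a closed subspace of the weighted sequence Hilbert space
\begin{equation*}
\ell^2_{s} := \Bigl\{\bsc = (c_j)_{j \in \N} \subset \R \; : \; \sum_{j\in\N} c_j^2 w_j^{-2s} < \infty\Bigr\},
\qquad
\dup{\bsc}{\bsd}_{\ell^2_s} := \sum_{j \in \N} c_j d_j w_j^{-2s},
\end{equation*}
via the coordinate map $T(x) := (\dup{x}{\tilde\psi_j})_{j\in\N} = \tilde\rF(x)$. By \eqref{eq:XsYt}, $x \in \cX^s$ iff $T(x) \in \ell^2_s$, and in that case $\norm[\cX^s]{x} = \norm[\ell^2_s]{T(x)}$; the proposed form $\dup{\cdot}{\cdot}_{\cX^s}$ is precisely the pull-back under $T$ of the inner product on $\ell^2_s$.

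Bilinearity and symmetry of $\dup{\cdot}{\cdot}_{\cX^s}$ are then inherited from $\dup{\cdot}{\cdot}_{\ell^2_s}$. Positive-definiteness follows from the reconstruction identity $\rF'\tilde\rF = I$ in \eqref{eq:compid}: if $\dup{x}{x}_{\cX^s} = 0$, then $\dup{x}{\tilde\psi_j} = 0$ for every $j$, hence
\begin{equation*}
x \;=\; \rF'\tilde\rF(x) \;=\; \sum_{j\in\N} \dup{x}{\tilde\psi_j}\,\psi_j \;=\; 0.
\end{equation*}

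For completeness, let $(x^{(n)})_{n \in \N} \subset \cX^s$ be Cauchy. Then $(T(x^{(n)}))_{n\in\N}$ is Cauchy in the Hilbert space $\ell^2_s$ and converges there to some $\bsc \in \ell^2_s$. Because $(w_j)_{j\in\N}$ is strictly positive and monotonically decreasing, $w_j^{-2s} \ge w_1^{-2s} > 0$ uniformly in $j$, so convergence in $\ell^2_s$ entails convergence of $(T(x^{(n)}))$ in $\ell^2(\N)$ as well. The lower frame bound for $\tilde\bsPsi_\cX$, recorded just before \eqref{eq:compid}, makes $\tilde\rF : \cX \to \ell^2(\N)$ bounded below, so $\mathrm{ran}(\tilde\rF)$ is closed in $\ell^2(\N)$ and $\tilde\rF$ is injective. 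Hence there is a unique $x \in \cX$ with $T(x) = \bsc$; by construction $x \in \cX^s$ and $\norm[\cX^s]{x^{(n)} - x} = \norm[\ell^2_s]{T(x^{(n)}) - \bsc} \to 0$.

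The only substantive step is the last one, namely verifying that the $\ell^2_s$-limit $\bsc$ of the coefficient sequences actually lies in $\mathrm{ran}(\tilde\rF)$. This is exactly the closed-range property supplied by the lower frame bound; all remaining steps amount to unpacking definitions and transporting the Hilbert-space structure of $\ell^2_s$ through the isometry $T$.
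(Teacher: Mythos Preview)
Your proof is correct and follows essentially the same route as the paper: both identify $\cX^s$ with its image in the weighted sequence space $\ell^2_s$ under the analysis map $\tilde\rF$, and both reduce completeness to the closed-range property of $\tilde\rF$ in $\ell^2(\N)$ (the paper phrases the recovery of the limit via the synthesis map $\cD=\rF'$ and the identity $\cE\circ\cD=I$ on $\cE(\cX)$, whereas you invoke injectivity of $\tilde\rF$ directly, but these are equivalent). Your explicit verification of positive-definiteness via $\rF'\tilde\rF=I$ and of the embedding $\ell^2_s\hookrightarrow\ell^2(\N)$ via $w_j^{-2s}\ge w_1^{-2s}$ in fact fills in details the paper leaves implicit.
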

  \begin{proof}
    Clearly $\dup{\cdot}{\cdot}_{\cX^s}$ defines an inner product on
    the set $\cX^s$ compatible with the norm $\norm[\cX^s]{\cdot}$. We
    need to show that $\cX^s$ is closed w.r.t.\ this norm.

    Denote $\cE=\tilde\rF_\cX$, $\cD=\rF_\cX'$ and recall that
    $\cE(\cX)$ is a closed subspace of $\ell^2(\N)$ due to the
    property $\norm[\ell^2(\N)]{\cE(x)} \ge \lambda_\cX\norm[\cX]{x}$.
    Furthermore, denote in the following by
    $\ell_s^2(\N)$ the sequence space of $\bsx\in\ell^2(\N)$ such that
    $\norm[\ell^2_s]{\bsx}^2:=\sum_{j\in\N}x_j^2w_j^{-2s}<\infty$.
    Note that $\ell^2_s(\N)$ is closed, %
    and
    $\norm[\ell^2_s]{\cE(x)}=\norm[\cX^s]{x}$.

    Take a Cauchy sequence $(x_n)_{n\in\N}\subseteq\cX^s$ w.r.t.\
    $\norm[\cX^s]{\cdot}$. Then
    $(\cE(x_n))_{n\in\N}\subseteq\ell^2(\N)$ is a Cauchy-sequence
    w.r.t.\ $\ell_s^2(\N)$, and since $\ell^2_s(\N)$ is closed,
    there exists $\bsx\in\ell^2_s(\N)\subseteq\ell^2(\N)$ such that
    $\cE(x_n)\to \bsx\in\ell^2_s(\N)$. 
    Since
    $\cE(\cX)\subseteq\ell^2(\N)$ is closed, 
    $\bsx$ belongs to $\cE(\cX)\subseteq\ell^2(\N)$.
    Since $\cD$ maps from $\ell^2(\N)$ to $\cX$,
    $\tilde x:=\cD(\bsx)\in\cX$ is well-defined and belongs to $\cX^s$
    since
    \begin{equation*}
      \norm[\cX^s]{\tilde x} = \norm[\ell^2_s]{\bsx} < \infty.
    \end{equation*}
    Using that
      $\cD\circ\cE$ is the identity on $\cX$
      (cp.~\eqref{eq:compid}), we find that
      $\cE\circ\cD\circ\cE =\cE$ and thus 
      $\cE\circ\cD$ is the identity on $\cE(\cX)$. 
    Hence
    \begin{equation*}
      \norm[\cX^s]{x_n-\tilde x}
      =\norm[\ell^2_s]{\cE(x_n)-\cE(\tilde x)}
      =\norm[\ell^2_s]{\cE(x_n)-\cE(\cD(\bsx))}
      =\norm[\ell^2_s]{\cE(x_n)-\bsx}
      \to 0
    \end{equation*}
    as $n\to\infty$. This shows that $\cX^s$ is closed w.r.t.\
    $\norm[\cX^s]{\cdot}$.
  \end{proof}

\begin{remark}\label{rmk:weighted}
  For ONBs $\bsPsi_\cX = \{\psi_j\}_{j\in\N}$ and
  $\bsPsi_\cY = \{ \eta_j \}_{j\in \N}$ of $\cX$ and $\cY$, the
  sequences $(w_j^s\psi_j)_{j\in\N}$ and $(w_j^t\eta_j)_{j\in\N}$ form
  ONBs of $\cX^s$, $\cY^t$ respectively.
\end{remark}

The Hilbert spaces $\cX^s$ and $\cY^t$ are included in their (unique,
\cite{MunozCplxB}) complexified versions
$\cX_\C^s = \{1,i\}\otimes \cX^s$ and
$\cY_\C^t = \{1,i\}\otimes \cY^t$, for which the encoder and decoder
in \eqref{eq:ED} act on weighted, complex sequence spaces.

\begin{remark} \label{rmk:CplxCoeff} 
  The results of this paper can be
  extended to the case where $\cX$ and $\cY$ are separable Hilbert
  spaces over the coefficient field $\C$. 
  We do not elaborate details
  in order to avoid having to distinguish between two cases in the following.
\end{remark}
\begin{remark}\label{remk:SplWavD}
{
The biorthogonal spline wavelet bases constructed in \cite{DahmStevEBE} 
are continuous, piecewise polynomial,
and are stable in the Sobolev spaces $H^s(\domain)$ for $|s|\leq 3/2$. 
Scaling $\psi_j$ obtained in \cite{DahmStevEBE} so that the Riesz basis
property holds in $\cX = L^2(\domain)$, 
appropriate choices of the weight sequence $w_j$ provides 
in particular \eqref{eq:XsYt} with $0\leq s \leq 3/2$ in $\cX^s = H^s(\domain)$.
In addition, the wavelet constructions $\psi_j$ and $\tilde{\psi}_j$ 
in \cite{DahmStevEBE}
can furnish wavelet systems with vanishing moments of any a-priori required 
polynomial order.
}
\end{remark}
\section{Main results}
\label{sec:MainReS}
Our goal is to approximate maps $\cG$ from (subsets of)
$\cX$ into $\cY$. In the following denote
by $\bsPsi_\cX$, $\bsPsi_\cY$ fixed frames of the separable Hilbert
spaces $\cX$, $\cY$ as in \eqref{eq:psieta}, and let the encoder
$\cE:\cX\to\ell^2(\N)$, decoder $\cD:\ell^2(\N)\to\cY$ be as in \eqref{eq:ED}.
With
    $$
    U:=[-1,1]^\N,
    $$
    and 
    $\bsw$ as in Section \ref{sec:Scales}, for $s > \frac12$,
    {and a scaling factor $r>0$} set
\begin{equation}\label{eq:sigma}
  \sigma_r^s:=\begin{cases}
    U\to %
    \cX\\
    \bsy\mapsto r \sum_{j\in\N}w_j^s y_j\psi_j.
    \end{cases}
  \end{equation}
  The condition $s> \frac12$ ensures that the coefficient
    sequence $(rw_j^sy_j)_{j\in\N}$ belongs to $\ell^2(\N)$ so that
    $\sigma_r^s$ is well-defined as a mapping from $U$ to $\cX$
    (cp.~\eqref{eq:calFp}). %
    For $s> \frac{1}{2}$
    we then introduce the following ``Cubes'' in $\cX$ 
\begin{equation*}%
  \tilde C_r^s(\cX):=\setlr{\sigma_r^s(\bsy)}{\bsy\in U}
\end{equation*}
and additionally for $s\geq 0$
\begin{align}\label{eq:Crs}
  C_r^s(\cX)&:=\set{a\in\cX}{\cE(a)\in\times_{j\in\N}[-rw_j^{s},rw_j^{s}]}\nonumber\\
              &=
  \setlr{a\in\cX}{\sup_{j\in\N}|\dup{a}{\tilde{\psi}_j}|w_j^{-s}\le r}.
\end{align}
The sets $C_r^s(\cX)$ will serve as the
domains on which $\cG$ is to be approximated.

\begin{remark}\label{rmk:riesz}
  Observe
  $C_r^s(\cX)\subseteq\tilde C_r^s(\cX)$.
  If $\bsPsi_\cX=(\psi_j)_{j\in\N}$ is a Riesz basis, then (due to the
  basis property) the $\ell^2(\N)$-sequence of expansion coefficients
  of any element $a\in\cX$ w.r.t.\ $\bsPsi_\cX$ is unique. Due to
  \eqref{eq:compid} and \eqref{eq:sigma} it thus must hold
  $\cE(\sigma_r^s(\bsy))=(rw_j^sy_j)_{j\in\N}$ for all $\bsy\in U$.
  This implies $\tilde C_r^s(\cX)= C_r^s(\cX)$.
  For frames, this
  does not hold in general, since $\cE(\sigma_r^s(\bsy))$ 
  need not belong to
  $\times_{j\in\N}[-rw_j^s,rw_j^s]$ for $\bsy\in U$.
\end{remark}
\begin{remark}
\label{rmk:Cs_in_Xs_prime}
Let $s'\ge 0$ and $s>s'+1/2$.
Then
$C^s_r(\cX) \subset \cX^{s'}$, since
for $a\in C^s_r(\cX)$
\begin{equation*}
\| a \|^2_{\cX^{s'}}
=
\sum_{j\in\bbN}
\langle a, \tilde\psi_j \rangle ^2 w_j^{-2s'}
\leq 
r^2 \sum_{j\in\bbN}
 w_j^{2(s-s')}
 <\infty, 
\end{equation*}
due to $(w_j)_{j\in\N}\in\ell^{1+\eps}(\N)$ for any $\eps>0$.
\end{remark}
We shall work under the \emph{assumption that $\cG$ allows a complex
differentiable extension to some open superset of 
$\tilde C_r^{s}(\cX)$ in $\cX_\C$}:

\begin{assumption}\label{ass:1}
  There exist $s>1$, $t>0$, $M<\infty$ and an open set $O_\C\subseteq \cX_\C$
  containing $\tilde C_r^{s}(\cX)$ such that
  $\sup_{a\in O_\C}\norm[\cY_\C^t]{\cG(a)}\le M$
  and $\cG:O_\C \to \cY_\C$ is holomorphic.
\end{assumption}
{
\begin{remark}
  Assumption \ref{ass:1} is for instance satisfied by solution
  operators corresponding to second order elliptic PDEs. Specific
  examples will be discussed in full detail in Section \ref{sec:Expl}
  ahead.
\end{remark}
}

  We emphasize that in Assumption \ref{ass:1} the holomorphy condition
  is merely required w.r.t.\ the topology of $\cY_\C$ and not with
  respect to the stronger topology of $\cY_\C^t$. {We will
  see in Lemma \ref{lemma:Gt} below, that the assumed boundedness}
  already implies holomorphy of $\cG:O_\C\to\cY_\C^{t'}$ for any $t'\in [0,t)$.

{
\subsection{Universality}
\label{sec:Univ}
We observe that, in the present setting \eqref{eq:GenForm}, there holds
a version of the universal approximation theorem. 
We consider admissible activation functions as in \cite{LESHNO1993861}:
         \begin{align*}
          \cA:=\big\{
                        &\text{$\sigma\in L_{\rm loc}^\infty(\R)$ is not polynomial and the closure } \\
                        &\qquad\text{of the points of discontinuity has Lebesgue measure $0$}\big\}.
        \end{align*}
Then there holds:
        \begin{theorem}\label{thm:Universality}
          Let $\cX$, $\cY$ be two separable Hilbert spaces, let
          $\cG:\cX\to\cY$ be continuous and let $\sigma\in\cA$.

          Then there exists a sequence of operator nets
          $\widetilde \cG_n:\cX\to\cY$, $n\in\N$, with architecture
          \eqref{eq:GenForm} such that
          \begin{equation*}
          \forall x \in \cX: \quad 
            \lim_{N\to\infty}\widetilde\cG_N(x)=\cG(x) \;.
          \end{equation*}
          The convergence is uniform on every compact subset of $\cX$.
        \end{theorem}
        The proof is given in \cite[Appendix]{LipONet}.}
\subsection{Worst-case error for NN operator surrogates}
\label{sec:WCENNOp}
Our first main result states that a
holomorphic operator $\cG$ as in Assumption \ref{ass:1} can be
uniformly approximated on $C_r^s(\cX)$ by a NN surrogate of the form
$\cD\circ\tilde\bG\circ\cE$, where $\tilde\bG$ is a ReLU NN.  
More precisely, $\tilde\bG$ is a {map} of the form
\begin{equation}\label{eq:NN}
  {\tilde\bG =} A_L\circ{\rm ReLU}\circ A_{L-1}\cdots
  \circ A_{1}\circ {\rm ReLU}\circ A_0
\end{equation}
where the application of ${\rm ReLU}(x):=\max\{0,x\}$ is understood
componentwise, and each $A_j:\R^{n_j}\to\R^{n_{j+1}}$ is an affine
transformation of the form $A_j(x)=W_jx+b_j$ with
$W_j\in\R^{n_{j+1}\times n_j}$, $b_j\in\R^{n_{j+1}}$.  
The entries of
the weights $W_j$ and the biases $b_j$ are parameters that
determine the NN.  It is common practice to determine these
parameters by NN ``training'', where some regression procedure on
input-output data pairs with the map induced by $\tilde\bG$ of the
form \eqref{eq:NN} is used to find choices of the weights and
biases. 
Alternatively, concrete constructions of 
the NN weights $W_j$ and biases $b_j$ 
based on \emph{a-priori specified samples of input-output data pairs} 
are sometimes proposed (e.g. \cite{HOS22_2887}).  
The presently developed, constructive proofs are of this type.  
We refer to
the number of nonzero entries of all $W_j$ and $b_j$, i.e.
\begin{equation}\label{eq:size}
  {\rm size}(\tilde\bG) := \sum_{0\leq j \leq L } \| W_j \|_0 + \| b_j \|_0
\end{equation}
as the \emph{size of the NN in \eqref{eq:NN}}. In other words,
the ``size'' of the network is the number of trainable network
parameters.

\begin{remark}\label{rmk:padding}
  Any realization of a NN $\tilde\bG$ of the form \eqref{eq:NN}
  represents a map from $\R^{n_0}\to\R^{n_{L+1}}$.  Throughout we will
  also understand $\tilde\bG$ as map from
  $\ell^2(\N)\to\ell^2(\N)$: 
  Let
  {$\tilde A_0\in \R^{n_1\times\infty}$ %
  and
$\tilde A_L \in \R^{\infty\times n_L}$
    be obtained from $A_0$ and $A_L$ by padding the (infinitely many)
    entries with zeros according to
    \begin{equation*}
      \tilde A_0 = \begin{pmatrix}
        (A_0)_{11}&\dots &(A_0)_{1 n_0}&0&\dots\\
        \vdots &\ddots &\vdots&\vdots&\dots\\
        (A_0)_{n_1 1}&\dots &(A_0)_{n_1 n_0}&0&\dots\\
      \end{pmatrix},
      \qquad
      \tilde A_L=
\begin{pmatrix}
        (A_L)_{11}&\dots &(A_L)_{1 n_{L}}\\
        \vdots &\ddots &\vdots\\
        (A_L)_{n_{L+1} 1}&\dots &(A_L)_{n_{L+1} n_{L}}\\
        0&\dots &0\\
        \vdots&\vdots&\vdots
      \end{pmatrix}.
    \end{equation*}
    Formally replacing $A_0$, $A_L$ by $\tilde A_0$, $\tilde A_L$
    in \eqref{eq:NN}, $\tilde\bG$ in \eqref{eq:NN}} becomes a mapping from
  $\ell^2(\N)\to\ell^2(\N)$. %
This new network is of the same size as the original one, since we
only add zero entries (cp.~\eqref{eq:size}). 
The function it realizes is
obtained by padding the original network input and output with zeros.
\end{remark}
The composition
$\cD\circ\tilde\bG\circ\cE$ is well-defined in the sense
of Rmk.~\ref{rmk:padding}.

\begin{theorem}
  \label{thm:main}
  Let Assumption \ref{ass:1} be satisfied with $s>1$, $t>0$.  
  Fix $\delta>0$ (arbitrarily small) and $r>0$.
  Then there exists a constant $C > 0$ %
  such that
  for every $N\in\N$ there exists a ReLU NN
  $\tilde{\bG}_N$ of size $O(N)$ such that
  \begin{equation}\label{eq:mainlinf}
    \sup_{a\in C_r^s(\cX)}\norm[\cY]{\cG(a)- \cD({\tilde{\bG}}_N(\cE(a)))}
    \le 
    C N^{-\min\{s-1,t\}+\delta}.
  \end{equation}
\end{theorem}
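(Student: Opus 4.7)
The plan is to reduce the operator approximation to the ReLU emulation of a $\cY^t$-valued, $(\bsb,\eps)$-holomorphic parametric map on the infinite-dimensional cube $U=[-1,1]^\N$, and then to combine a truncated Legendre gpc expansion with ReLU polynomial emulation. For $a\in C_r^s(\cX)$ I set $y_j:=(rw_j^s)^{-1}\dup{a}{\tilde\psi_j}\in[-1,1]$; by \eqref{eq:compid} this gives $a=\sigma_r^s(\bsy)$, so that $\cG(a)=f(\bsy)$ with $f(\bsy):=\cG(\sigma_r^s(\bsy))$. Since $\sigma_r^s:U\to\cX$ is continuous (the defining series being absolutely convergent thanks to $\bsw\in\ell^{1+\eps}$), its image $\sigma_r^s(U)=\tilde C_r^s(\cX)$ is compact in $\cX$ and contained in the open set $O_\C\subset\cX_\C$; a uniform margin then allows $f$ to extend holomorphically to every polydisc $U_\bsrho\supset U$ with $\bsrho\ge\bsone$ and $\sum_j w_j^s(\rho_j-1)$ small enough. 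Hence $f$ is $(\bsb,\eps)$-holomorphic with $\bsb=(rw_j^s)_{j\in\N}\in\ell^p(\N)$ for every $p>1/s$, and uniformly bounded in $\cY_\C^t$ by Assumption~\ref{ass:1}.

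Invoking the standard vector-valued gpc approximation estimates for $(\bsb,\eps)$-holomorphic maps into a Hilbert space \cite{BCDS17_2452,SchwabZechAA}, for every $K\in\N$ there exist a downward-closed index set $\Lambda_K\subset\N_0^\N$ of cardinality $K$ and Legendre coefficients $c_\bsnu\in\cY^t$ with
\begin{equation*}
\sup_{\bsy\in U}\Bigl\|f(\bsy)-\sum_{\bsnu\in\Lambda_K}c_\bsnu L_\bsnu(\bsy)\Bigr\|_{\cY^t}\lesssim K^{-(s-1)+\delta/2}.
\end{equation*}
The continuous embedding $\cY^t\hookrightarrow\cY$, which follows from $\|\cD\|_{\ell^2\to\cY}\le\Lambda_{\bsPsi_\cY}$ together with $w_1<\infty$, transports this bound to the $\cY$-norm. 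To obtain a finitely representable surrogate I replace each $c_\bsnu$ by its truncation $\bar c_\bsnu:=\cD\pi_M\tilde\rF_\cY c_\bsnu$ to the first $M$ components in $\tilde\bsPsi_\cY$, which costs an additional $\cY$-error at most $w_{M+1}^t\sum_{\bsnu\in\Lambda_K}\norm[\cY^t]{c_\bsnu}\|L_\bsnu\|_\infty\lesssim M^{-t+\delta/2}$; finiteness of the $\ell^1$-type Legendre sum over the downward-closed $\Lambda_K$ is the standard estimate available for $\bsb\in\ell^p$ with $p<1$.

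The resulting object $\bsy\mapsto\sum_{\bsnu\in\Lambda_K}\bar c_\bsnu L_\bsnu(\bsy)$, read coordinate-wise in $\tilde\bsPsi_\cY$, is a polynomial map from the at most $O(\log K)$ active $y_j$ (as is typical for downward-closed $\Lambda_K$ with $|\Lambda_K|=K$) into $\R^M$. Yarotsky-type ReLU emulation of each multivariate Legendre polynomial together with the linear recombination into the $M$ output coordinates yields a ReLU NN $\tilde\bG$ whose size is controlled by $K$, $M$, the maximal support and degree of $\bsnu\in\Lambda_K$, and the prescribed emulation accuracy (cf.~\cite{SchwabZechAA,OPS20_2738}); padding with zeros as in Remark~\ref{rmk:padding} casts $\tilde\bG$ as a map $\ell^2(\N)\to\ell^2(\N)$, so that $\cD\circ\tilde\bG\circ\cE$ takes values in $\cY$. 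The final step is to balance $K$ and $M$ so that both error contributions $K^{-(s-1)}$ and $M^{-t}$ match the target rate $N^{-\min\{s-1,t\}}$ while keeping the total parameter count proportional to $N$. The main obstacle is precisely this bookkeeping: reconciling the polynomial-output emulation cost with the target size $O(N)$ forces the gpc summability exponent $p>1/s$ to be chosen slightly larger than optimal, and the resulting sub-polynomial factors (together with the polylogarithms from Yarotsky emulation) are absorbed into the free parameter $\delta>0$.
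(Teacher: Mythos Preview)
Your reduction to the parametric map $f(\bsy)=\cG(\sigma_r^s(\bsy))$ on $U$, the $(\bsb,\eps)$-holomorphy with $\bsb=(rw_j^s)_{j}\in\ell^p(\N)$ for $p>1/s$, and the gpc truncation in $\cY^{t}$ (really $\cY^{t'}$, $t'<t$) are all correct and match the paper. The gap is in the last paragraph. With a \emph{uniform} output truncation to the first $M$ frame coefficients, the linear layer that maps the $K$ emulated Legendre polynomials $(\tilde L_\bsnu)_{\bsnu\in\Lambda_K}$ to the $M$ output coordinates is a dense $M\times K$ matrix; hence the network size is of order $KM$ (plus the $O(K\,\mathrm{polylog})$ cost of the polynomial block). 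With error $\lesssim K^{-(s-1)}+M^{-t}$ and size $N\sim KM$, optimizing gives only the rate
\[
\frac{(s-1)\,t}{(s-1)+t},
\]
which is strictly smaller than $\min\{s-1,t\}$ for every $s>1$, $t>0$ (e.g.\ it is $(s-1)/2$ when $s-1=t$). Equivalently, forcing $K^{-(s-1)}\sim M^{-t}\sim N^{-\min\{s-1,t\}}$ yields $KM\sim N^{1+\min\{s-1,t\}\cdot\max\{1/(s-1),1/t\}}\gg N$. So the claimed balancing ``while keeping the total parameter count proportional to $N$'' cannot be realized with your construction.

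The paper fixes precisely this by a \emph{non-uniform} output allocation: to the $i$th Legendre mode $\bsnu_i$ (ordered so that the coefficient bounds $a_{\bsnu_i}$ decrease) it assigns its own truncation level $m_i$, chosen so that $\sum_i m_i\le N$ while
\[
\sum_{i\in\N}\omega_{\bsnu_i}^{1/2}\Bigl(\sum_{j>m_i}c_{\bsnu_i,j}^2\Bigr)^{1/2}\lesssim N^{-\min\{s-1,t\}+\delta};
\]
this is the content of Lemma~\ref{lemma:ml} combined with Proposition~\ref{prop:weightsum}. The output layer then has exactly $\sum_i m_i\le N$ nonzero weights, so the total size stays $O(N\,\mathrm{polylog}(N))$ and the rate $\min\{s-1,t\}$ is achieved. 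In short, the missing idea in your proposal is to let the number of retained $\eta_j$-coefficients depend on the Legendre index rather than truncating all $c_\bsnu$ at the same level $M$.
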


Next, introduce the closed ball of radius $r$ in $\cX^s$
\begin{equation*}
  B_r(\cX^s):=\setlr{a\in\cX}{\norm[\cX^s]{a}\le r}.
\end{equation*}
Since for any $\eps>0$, we have
\begin{equation*}
  B_r(\cX^s)\subseteq C_r^s(\cX)\subseteq B_{r_\eps}(\cX^{s-\frac{1}{2}-\eps})
\end{equation*}
with $r_\eps:=r(\sum_{j\in\N}w_j^{1+2\eps})^{1/2}<\infty$
(cp.~\eqref{eq:XsYt}, \eqref{eq:Crs} and
Rmk.~\ref{rmk:Cs_in_Xs_prime}), we trivially get the following:
\begin{corollary}\label{cor:ball}
  Consider the setting of Theorem \ref{thm:main}. 
  Then there exists a constant $C>0$ such that for every $N\in \bbN$
  there exists a ReLU NN %
  $\tilde{\bG}_N$ of size $O(N)$ such that
  \begin{equation*}
    \sup_{a\in B_r(\cX^s)}\norm[\cY]{\cG(a)-\cD(\tilde{\bG}_N(\cE(a)))} 
    \le C N^{-\min\{s-1,t\}+{\delta}}.
  \end{equation*}  
\end{corollary}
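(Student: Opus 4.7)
The plan is to derive the corollary directly from Theorem~\ref{thm:main} by exploiting the elementary inclusion $B_r(\cX^s)\subseteq C_r^s(\cX)$ recorded in the paragraph preceding the statement, so that the ReLU NN produced by Theorem~\ref{thm:main} on the larger set $C_r^s(\cX)$ automatically serves on the smaller set $B_r(\cX^s)$.

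First I verify the inclusion. If $a\in B_r(\cX^s)$, then by definition \eqref{eq:XsYt} each individual summand in $\norm[\cX^s]{a}^2$ is bounded by the whole sum, so $\dup{a}{\tilde\psi_j}^2 w_j^{-2s}\le \norm[\cX^s]{a}^2 \le r^2$ for every $j\in\N$. Equivalently, $|\dup{a}{\tilde\psi_j}|\le r w_j^s$ for all $j$, which by \eqref{eq:Crs} shows $a\in C_r^s(\cX)$. Hence $B_r(\cX^s)\subseteq C_r^s(\cX)$.

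Next I apply Theorem~\ref{thm:main} in its native form: since the setting (Assumption~\ref{ass:1} with the same $s>1$, $t>0$, $r>0$ and $\delta>0$) is inherited, for every $N\in\N$ the theorem supplies a ReLU NN $\tilde{\bG}_N$ of size $O(N)$ with
\begin{equation*}
\sup_{a\in C_r^s(\cX)}\norm[\cY]{\cG(a)-\cD(\tilde{\bG}_N(\cE(a)))}\le C N^{-\min\{s-1,t\}+\delta}.
\end{equation*}
Restricting the supremum to the subset $B_r(\cX^s)$ can only decrease it, so the same inequality holds with $B_r(\cX^s)$ in place of $C_r^s(\cX)$. Finally, since $N\ge 1$, we have $N^{-\min\{s-1,t\}+\delta}\le N^{-\min\{s-1,t\}+1+\delta}$, which yields the (weaker) rate as stated.

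There is no substantive obstacle here: the corollary is essentially a cosmetic reformulation of Theorem~\ref{thm:main} on a smaller input class, and the only ingredient beyond Theorem~\ref{thm:main} itself is the pointwise-vs-$\ell^2$ observation $B_r(\cX^s)\subseteq C_r^s(\cX)$ already flagged by the authors. I would remark in passing that this argument in fact delivers the sharper bound $CN^{-\min\{s-1,t\}+\delta}$ on $B_r(\cX^s)$; the extra ``$+1$'' in the stated exponent is a free loosening (harmless for $N\ge 1$) and is presumably retained to match a uniform form that would also accommodate the companion inclusion $C_r^s(\cX)\subseteq B_{r_\eps}(\cX^{s-1/2-\eps})$ if one wished to pass to the coarser scale.
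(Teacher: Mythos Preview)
Your proof is correct and is exactly the paper's approach: the authors state the inclusion $B_r(\cX^s)\subseteq C_r^s(\cX)$ just before the corollary and then declare the result ``trivial,'' which is precisely the restriction-of-supremum argument you wrote out. Your closing observation is also on point: the argument actually yields the sharper rate $N^{-\min\{s-1,t\}+\delta}$, and indeed that is the rate the paper uses when it later invokes the corollary in Proposition~\ref{prop:torus}, so the ``$+1$'' in the stated exponent appears to be a typo rather than an intended loosening.
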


\begin{remark}
Clearly $B_r(\cX^s)$ is a proper subset of $C_r^s(\cX)$.  However, in
general there is no $s'>s$ and $r'>0$ such that
$B_{r'}(\cX^{s'})\subseteq C_r^s(\cX)$, thus we cannot use
Theorem \ref{thm:main} to improve the convergence rate on the ball 
$B_r(\cX^s)\subset \cX^s$.
\end{remark}
Our results provide sufficient conditions under which operator nets
can overcome the curse of dimensionality, since we allow the
operators to have infinite dimensional domains. 
The proof hinges on
certain ``sparsity'' properties of the encoded coefficients. Neural
networks are able to exploit this form of intrinsic low-dimensionality
and in this way elude the curse of dimension.  However, we emphasize
that it is the intrinsic sparsity of the considered functions, rather
than specific properties of NNs that lead to these
statements.  The same convergence rate %
can
be obtained with other
methods such as sparse-grid polynomial interpolation or low-rank
tensor approximation, as we discuss in Section \ref{sec:wcgpc} ahead.

\subsection{Mean-square error for NN operator surrogates}
\label{sec:mnsq}
We can improve the operator approximation rate of Theorem
\ref{thm:main}, if we measure the error in a mean-square sense. To
this end assume that $\bsPsi_\cX$ is a Riesz basis, and let
$\mu:=\otimes_{j\in\N}\frac{\lambda}{2}$ be the uniform probability
measure on $U:=\times_{j\in\N}[-1,1]$ equipped with its %
product Borel sigma algebra, where $\lambda$ stands for the Lebesgue
measure in $\bbR$. 
By Rmk.~\ref{rmk:riesz},
the pushforward $(\sigma_r^s)_\sharp \mu$ of $\mu$ under $\sigma_r^s$
then constitutes a measure on $C_r^s(\cX)$.

\begin{theorem}%
  \label{thm:main_riesz}
  Assume that $\bsPsi_\cX$ is a Riesz basis.  Let Assumption
  \ref{ass:1} be satisfied with $s>1$, $t>0$.  Fix $\delta>0$
  (arbitrarily small) and $r>0$.
  Then there exists a constant $C > 0$ %
  such that
  for every $N\in\N$ there exists a ReLU NN
  $\tilde{\bG}_N$ of size $O(N)$ such that
  \begin{equation}\label{eq:mainl2}
    \normlr[L^2(C_r^s(\cX),(\sigma_r^s)_\sharp\mu;\cY)]{\cG - \cD\circ {\tilde{\bG}}_N \circ \cE}
    \le 
    C N^{-\min\{s-\frac{1}{2},t\}+\delta}.
  \end{equation}
\end{theorem}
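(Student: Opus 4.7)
The plan is to parallel the proof of Theorem~\ref{thm:main}, exploiting the Riesz basis assumption to globally parametrize $C_r^s(\cX)$ and replacing the worst-case polynomial approximation by a weighted $L^2$ (Legendre) expansion, which yields the characteristic half-power gain in the convergence exponent.

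First, I would use Remark~\ref{rmk:riesz}: because $\bsPsi_\cX$ is a Riesz basis, one has the identities $C_r^s(\cX)=\tilde{C}_r^s(\cX)$ and $\cE\circ\sigma_r^s(\bsy)=(rw_j^s y_j)_{j\in\N}$ for all $\bsy\in U$. Hence $\sigma_r^s:U\to C_r^s(\cX)$ is onto, and for any candidate surrogate $\cD\circ\tilde{\bG}_N\circ\cE$ the change of variables formula gives
\begin{equation*}
\normlr[L^2(C_r^s(\cX),(\sigma_r^s)_\sharp\mu;\cY)]{\cG-\cD\circ\tilde{\bG}_N\circ\cE}
=\normlr[L^2(U,\mu;\cY)]{F-\cD\circ\tilde{\bG}_N\circ E_{s}},
\end{equation*}
where $F:=\cG\circ\sigma_r^s$ and $E_s(\bsy):=(rw_j^sy_j)_{j\in\N}$. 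Thus it suffices to approximate the parametric map $F:U\to\cY_\C^t$ in $L^2(U,\mu;\cY)$.

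Next, Assumption~\ref{ass:1} implies that $F$ extends holomorphically to a neighborhood of $U$ in $\C^\N$, uniformly bounded by $M$ with values in $\cY_\C^t$. Writing the sensitivity sequence $\bsb=(rw_j^s)_{j\in\N}$, the hypothesis $\bsw^{1+\eps'}\in\ell^1(\N)$ gives $\bsb\in\ell^p(\N)$ for every $p>1/s$. The $(\bsb,\eps)$-holomorphy machinery (as used in the proof of Theorem~\ref{thm:main}, drawing on \cite{SchwabZechAA,BCDS17_2452}) yields tensorized Legendre coefficients $c_\nu\in\cY_\C^t$ and a family of downward-closed index sets $\Lambda_N$ of cardinality $N$ such that the truncated $\cY_\C^t$-valued Legendre expansion $P_N:=\sum_{\nu\in\Lambda_N}c_\nu L_\nu$ satisfies
\begin{equation*}
\normlr[L^2(U,\mu;\cY_\C^t)]{F-P_N}\lesssim N^{-(1/p-1/2)},
\end{equation*}
so choosing $p$ arbitrarily close to $1/s$ produces rate $N^{-(s-1/2)+\delta/2}$. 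This is the step where the $L^2$ orthogonality of the Legendre basis buys half a power over the Taylor/Chebyshev argument underlying Theorem~\ref{thm:main}.

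Then I would compose $P_N$ with a decoder truncated to the first $K\simeq N$ frame indices: using the definition of $\cY^t$ in \eqref{eq:XsYt} and the frame bound of $\bsPsi_\cY$, the tail $\cD(\bsz)-\sum_{j\le K}z_j\eta_j$ is controlled in $\cY$ by $\Lambda_{\bsPsi_\cY} w_{K+1}^t\norm[\cY^t]{\cdot}\lesssim N^{-t+\delta/2}$ uniformly. What remains is to realize the finite-dimensional polynomial map $(y_j)_{j\le J}\mapsto(\dup{P_N(\bsy)}{\tilde\eta_k})_{k\le K}\in\R^K$ (with $J$ chosen as the maximal active coordinate in $\Lambda_N$, which is $\lesssim N$ up to logarithmic factors) by a ReLU NN; for this I would invoke the standard constructive emulation of sparse tensorized polynomials by ReLU networks used in the proof of Theorem~\ref{thm:main} (cf.~\cite{SchwabZechAA,OPS20_2738}), which produces a network of size $O(N)$ achieving emulation error $N^{-\min\{s-1/2,t\}+\delta/2}$ in $L^2(U,\mu;\R^K)$. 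Padding the input/output with zeros as in Remark~\ref{rmk:padding} defines $\tilde{\bG}_N:\ell^2(\N)\to\ell^2(\N)$, and combining the three error contributions via the triangle inequality delivers~\eqref{eq:mainl2}.

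The main obstacle will be the bookkeeping for the NN size budget: one must simultaneously absorb the maximal coordinate $J$, the output truncation level $K$, and the total number of active multi-indices in $\Lambda_N$ into a single $O(N)$ budget, while ensuring that the polynomial-emulation log factors are absorbed into the arbitrary $\delta>0$. This is essentially a careful rerun of the analysis for Theorem~\ref{thm:main} with $L^2$-optimal index sets in place of the $L^\infty$-optimal ones; the underlying summability estimate ($\bsb\in\ell^p$ for $p>1/s$) is identical, but the rate gain of $1/2$ hinges entirely on measuring the polynomial truncation error in $L^2(U,\mu)$ rather than $L^\infty(U)$.
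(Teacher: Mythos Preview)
Your overall strategy---use the Riesz-basis property to identify $C_r^s(\cX)$ with $\sigma_r^s(U)$, then approximate $F=\cG\circ\sigma_r^s$ by a truncated Legendre expansion and exploit $L^2$-orthogonality for the extra half power---matches the paper. The gap is in the size budget. You take $|\Lambda_N|=N$ Legendre terms and then truncate the decoder uniformly at level $K\simeq N$. The resulting finite-dimensional polynomial $(y_j)_{j\le J}\mapsto(\dup{P_N(\bsy)}{\tilde\eta_k})_{k\le K}$ has, in general, $|\Lambda_N|\cdot K\simeq N^2$ nonzero scalar coefficients $c_{\bsnu,k}$; the final affine layer of any ReLU emulation must carry essentially all of them, so the network cannot be of size $O(N)$. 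If instead you split the budget as $|\Lambda|\cdot K=N$, optimizing gives rate $N^{-(s-1/2)t/(s-1/2+t)}$, which is strictly worse than $N^{-\min\{s-1/2,t\}}$ whenever $s-1/2$ and $t$ are comparable.

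The paper avoids this via a \emph{joint} allocation (Lemma~\ref{lemma:ml}~(ii)): from the weighted bound $\omega_{\bsnu_i}\sum_j w_j^{-2t'}c_{\bsnu_i,j}^2\lesssim a_{\bsnu_i}^2$ (Proposition~\ref{prop:weightsum}) one chooses, for each Legendre index $\bsnu_i$, an individual output cutoff $m_i$ with $\sum_i m_i\le N$ and $(\sum_i\sum_{j>m_i}c_{\bsnu_i,j}^2)^{1/2}\lesssim N^{-\min\{s-1/2,t\}+\delta}$. The network $\tilde g_j(\bsy)=\sum_{\{i:m_i\ge j\}}c_{\bsnu_i,j}\tilde L_{\bsnu_i}(\bsy)$ then has exactly $\sum_i m_i\le N$ coefficients in its output layer, and the $L^2$-error bound follows from orthonormality of $(L_\bsnu)_{\bsnu\in\cF}$. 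Your argument is missing precisely this nonuniform coupling of the polynomial and decoder truncations.
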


\subsection{Worst-case error for {spectral} operator surrogates}
\label{sec:wcgpc}
For our third main result, instead of a NN $\tilde\bG_N$ we use a
multivariate polynomial $p_N:\R^{n}\to\R^{m}$. The operator surrogate
then takes the form $\cD\circ p_N\circ \cE$, where the composition is
again understood as truncating the output of $\cE$ after the first $n$
parameters, and padding the output of $p_N$ with infinitely many zeros
(cp.~Rmk.~\ref{rmk:padding}). 
The advantage over the NN operator
surrogate in Theorem \ref{thm:main} is that, while we achieve the
same converence rate, the proof is constructive, 
and one can explicitly compute $p_N$ as an interpolation polynomial,
{based on a finite set of \emph{judiciously chosen} 
(rather than random, i.i.d) input-output pairs.}
Hence, the ``training'' %
{consists of an explicit and deterministic construction,
rather than the minimization of a (typically non-convex)
loss by stochastic optimization methods.
Moreover, and as a consequence,
we are able to obtain} (higher-order) deterministic {(``worst-case'')} 
generalization bounds, 
{
rather than (low-order) probabilistic bounds 
as commonly applied within statistical learning theory.
}

Contrary to Section \ref{sec:mnsq}, 
we now allow for $\bsPsi_\cX$ again to be a frame.

\begin{theorem}
  \label{thm:main2} 
  Consider the setting of Theorem \ref{thm:main}. Then
  there is a constant $C>0$ such that for every $N\in\N$ there exists
  a multivariate polynomial $p_N$
  such that %
  \begin{equation*}
    \sup_{a\in C_r^s(\cX)}\norm[\cY]{\cG(a)-\cD(p_N(\cE(a)))}
    \le 
     C N^{-\min\{s-1,t\}+\delta}.
  \end{equation*}

  Furthermore, $p_N$ belongs to an $N$-dimensional space of
  multivariate polynomials. Its components are interpolation
  polynomials, whose computation requires the evaluation of
  $\dup{\cG(a)}{\tilde{\eta}_j}$ at at most $N$ tuples
  $(a,j)\in C_r^s(\cX)\times\N$.
\end{theorem}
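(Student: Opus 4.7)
The plan is constructive and parallels the strategy behind Theorem~\ref{thm:main}, with the NN emulator replaced by a sparse polynomial interpolant. First I reparametrize the input. For $a\in C_r^s(\cX)$, \eqref{eq:compid} together with \eqref{eq:Crs} and \eqref{eq:sigma} yields $a=\sigma_r^s(\bsy(a))$ with $\bsy(a):=(\cE(a)_j/(rw_j^s))_{j\in\N}\in U$, so that $\cG(a)=g(\bsy(a))$ for $g:=\cG\circ\sigma_r^s$. By Assumption~\ref{ass:1} and the fact that $\sigma_r^s$ extends to a $\cX_\C$-valued map on a polydisc in $\C^\N$ (with radii of analyticity in direction $y_j$ proportional to $(rw_j^s)^{-1}$), the composite $g$ admits a holomorphic extension to such a polydisc containing $U$, and $\|g(\bsy)\|_{\cY^t_\C}\le M$ there. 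The size sequence $\bsb=r\bsw^s$ satisfies $\bsb\in\ell^{p_0}(\N)$ for any $p_0>1/s$, so that $g$ is $(\bsb,\epsilon)$-admissible in the sense of Chkifa--Cohen--Schwab / Schwab--Zech.

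For each frame index $j\in\N$ I set $h_j(\bsy):=\dup{g(\bsy)}{\tilde\eta_j}$; this is a scalar holomorphic function on $U$ satisfying $|h_j(\bsy)|\le Mw_j^t$ uniformly, by the $\cY^t_\C$-bound on $g$ and the definition of the $\cY^t$-norm. I then invoke sparse polynomial interpolation for $(\bsb,\epsilon)$-admissible scalars (Smolyak--Chebyshev or Leja sparse grids, as in Chkifa--Cohen--Schwab): given a per-component budget $K_j\in\N$, there is an interpolating polynomial $q_j$ in finitely many $y$-variables, computable from $K_j$ evaluations of $\dup{\cG(\sigma_r^s(\cdot))}{\tilde\eta_j}$, with
\begin{equation*}
  \sup_{\bsy\in U}|h_j(\bsy)-q_j(\bsy)|\le C\,Mw_j^t\,K_j^{-(s-1)+\delta/2}.
\end{equation*}
Defining $p_N$ to have components $\bsz\mapsto q_j(\bsz/(r\bsw^s))$ for $j\le n$ and $0$ otherwise (padded as in Remark~\ref{rmk:padding}), the frame upper bound of $\bsPsi_\cY$ (Remark~\ref{rmk:FrBd}) gives
\begin{equation*}
  \|\cG(a)-\cD(p_N(\cE(a)))\|_\cY^2
  \le \Lambda_{\bsPsi_\cY}^2\Bigl[\sum_{j\le n}\bigl(h_j(\bsy(a))-q_j(\bsy(a))\bigr)^2+\sum_{j>n}h_j(\bsy(a))^2\Bigr],
\end{equation*}
and the tail obeys $\sum_{j>n}h_j(\bsy)^2\le M^2 w_{n+1}^{2t}$ via $\sum_j h_j^2 w_j^{-2t}\le M^2$ and the monotonicity of $\bsw$.

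The remaining step is to balance the per-component polynomial error $Mw_j^t K_j^{-(s-1)+\delta}$ against the frame-truncation error $Mw_{n+1}^t$, subject to $\sum_{j\le n}K_j\le N$. An allocation $K_j\propto w_j^{2t/(2s-1)}$ obtained by Lagrange optimization of $\sum_j w_j^{2t}K_j^{-2(s-1)}$ is summable in $j$ precisely in the regime where the polynomial contribution dominates, yielding rate $N^{-(s-1)+\delta}$; in the complementary regime the truncation term dominates and a suitable (uniform or dyadic) fallback allocation yields rate $N^{-t+\delta}$. Combining both regimes gives $\|\cG(a)-\cD(p_N(\cE(a)))\|_\cY\le C\,N^{-\min\{s-1,t\}+\delta}$ uniformly over $a\in C_r^s(\cX)$, with $p_N$ living in an $N$-dimensional space of polynomials whose components are interpolation polynomials requiring $\sum_{j\le n}K_j\le N$ scalar queries $\dup{\cG(a)}{\tilde\eta_j}$, as claimed. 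The main technical obstacle is the per-component sparse-interpolation estimate: it relies crucially on Assumption~\ref{ass:1} furnishing a holomorphy polydisc whose reciprocal radii belong to $\ell^{1/s+\delta}(\N)$; the subsequent allocation argument is a delicate but standard nonlinear-approximation balancing step.
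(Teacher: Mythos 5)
Your construction is essentially the same as the paper's proof of Theorem~\ref{thm:int}: interpolate the scalar maps $\bsy\mapsto\dup{u(\bsy)}{\tilde\eta_j}$ on downward-closed index sets $\Lambda_{N,j}$ with $\sum_j|\Lambda_{N,j}|\le N$, then recombine via the frame. The gap lies entirely in the error analysis. After the frame bound you estimate each $|h_j-q_j|$ by the \emph{scalar} sparse-interpolation bound $CMw_j^{t}K_j^{-(s-1)+\delta}$ and then take $\ell^2$ over $j$. This step uses only the \emph{entrywise} coefficient bound $|c_{\bsnu,j}|\lesssim w_j^{t'}a_\bsnu$; it discards the crucial aggregate estimate of Proposition~\ref{prop:weightsum}, namely $\sum_j w_j^{-2t'}c_{\bsnu,j}^2\lesssim a_\bsnu^2$, which is strictly stronger. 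In the regime $t<s-\tfrac12$ no allocation $(K_j)$ with $\sum_j K_j\le N$ recovers the claimed rate from your bound. Concretely, take $s=2$, $t=1$, so $d_j=w_j^{t'}\simeq j^{-1}$ and $a=s-1=1$; one must minimize $\bigl(\sum_{j\le n}j^{-2}K_j^{-2}+n^{-2}\bigr)^{1/2}$ subject to $\sum_{j\le n}K_j\le N$. Controlling the tail at $N^{-2}$ forces $n\gtrsim N$, but then the interpolation sum is bounded below (for \emph{any} admissible $K_j$) by $\gtrsim N^{-1}$, so the error is at best $\simeq N^{-1/2}$; balancing instead gives $n\simeq N^{2/3}$ and total error $\simeq N^{-2/3}$. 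Both fall short of the theorem's $N^{-1+\delta}$, and in fact your ``uniform or dyadic fallback yields $N^{-t+\delta}$'' claim fails: the optimal rate of your estimate is $\max\bigl\{\tfrac{2(s-1)t}{2(s-1)+1},\tfrac{(s-1)t}{s-1+t}\bigr\}<\min\{s-1,t\}$ whenever $t<s-\tfrac12$.

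The paper avoids this loss by \emph{not} bounding each $j$-component independently. Starting from the $\ell^2_j$-of-$\ell^1_\bsnu$ expression that the frame bound produces, it applies the triangle (Minkowski) inequality once to pass to the $\ell^1_\bsnu$-of-$\ell^2_j$ form $\sum_i \omega_{\bsnu_i}^{3/2+\tau}\bigl(\sum_{j>m_i}c_{\bsnu_i,j}^2\bigr)^{1/2}$, where the inner $\ell^2_j$ tail can be estimated by $w_{m_i}^{t'}a_{\bsnu_i}$ directly from Proposition~\ref{prop:weightsum}; the balancing is then done via Lemma~\ref{lemma:ml}(i) on the quantities $a_{\bsnu_i}\simeq i^{-\alpha}$ and $w_j^{t'}\simeq j^{-\beta}$, yielding $N^{-\min\{\alpha-1,\beta\}+\delta}$. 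This reorganization is the step your argument is missing; without it the per-$j$ scalar viewpoint cannot see the decay-in-$j$ of the Legendre coefficients beyond the crude $w_j^{t'}$ prefactor, and the theorem's rate is unreachable.
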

{Prior to proving these results}, 
let us make one further remark.
Corollary \ref{cor:ball} states that we can \emph{uniformly}
approximate any holomorphic $\cG$ as in Assumption \ref{ass:1} with a
NN operator surrogate on the ball $B_r(\cX^s)$; an analogous corollary
also holds for {spectral} operator surrogates. 
Since $\cX^s$ is an infinite dimensional Hilbert space, 
$B_r(\cX^s)$ is not compact in $\cX^s$. 
Thus the image of $B_r(\cX^s)$ will in general also not be
compact in $\cY^t$, for example in case $\cX=\cY^t$ and
$\cG:\cX\to\cY^t$ is the identity (which satisfies Assumption
\ref{ass:1}). Therefore it seems counterintuitive that we can
uniformly approximate $\cG$ using a NN with only \emph{finitely} many
parameters (or a polynomial of finite degree). This is possible,
because the approximation rate is stated not in the norm of $\cY^t$,
but in the weaker norm of $\cY$, and in fact a ball in $\cY^t$ is
compact in $\cY$:
\begin{lemma}
  For every $0\le t'<t<\infty$, the set $B_r(\cY^t)$ is compact in $\cY^{t'}$.
\end{lemma}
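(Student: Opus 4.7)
The plan is to transport the problem to weighted $\ell^2$ sequence spaces. Denote by $\cE_\cY := \tilde\rF_\cY$ the $\cY$-analogue of the encoder, $y \mapsto (\dup{y}{\tilde\eta_j})_{j\in\N}$. By the definition \eqref{eq:XsYt} of $\norm[\cY^t]{\cdot}$, this map is an isometry from $\cY^t$ onto a closed subspace (as in the proof of Lemma~\ref{lem:cXs}) of the weighted sequence space
$\ell^2_t(\N) := \set{\bsc \in \ell^2(\N)}{\sum_j c_j^2 w_j^{-2t} < \infty}$,
and similarly for $t'$. It therefore suffices to show that the closed $r$-ball in $\ell^2_t(\N)$ is compact in $\ell^2_{t'}(\N)$, and then to pull the conclusion back through $\cE_\cY$.

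Given an arbitrary sequence $(y_n)_{n\in\N} \subseteq B_r(\cY^t)$, I would set $\bsv_n := \cE_\cY(y_n)$, so $\sum_j (v_n)_j^2 w_j^{-2t} \le r^2$. Each coordinate satisfies $|(v_n)_j| \le r w_j^t$ uniformly in $n$, so a Cantor diagonal argument yields a subsequence (not relabeled) with $(v_n)_j \to c_j$ for every $j$, and Fatou's lemma gives $\sum_j c_j^2 w_j^{-2t} \le r^2$.

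The heart of the argument is a uniform tail bound exploiting that $(w_j)_{j\in\N}$ is positive, decreasing, and tends to $0$ (since $\bsw^{1+\epsilon}\in\ell^1(\N)$ for all $\epsilon>0$): for every $J\in\N$,
\begin{equation*}
  \sum_{j > J} ((v_n)_j - c_j)^2 w_j^{-2t'}
  = \sum_{j > J} ((v_n)_j - c_j)^2 w_j^{-2t} \cdot w_j^{2(t-t')}
  \le w_J^{2(t-t')} \cdot 4 r^2,
\end{equation*}
which vanishes as $J \to \infty$ since $t - t' > 0$. For each fixed $J$, the finite head $\sum_{j \le J} ((v_n)_j - c_j)^2 w_j^{-2t'}$ tends to $0$ as $n\to\infty$ by componentwise convergence. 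Hence $\bsv_n \to \bsc$ in $\ell^2_{t'}(\N)$, and therefore also in $\ell^2(\N)$ (since $w_j \le w_1$ bounds the $\ell^2$ norm by the $\ell^2_{t'}$ norm up to a constant). Because $\cE_\cY(\cY)$ is closed in $\ell^2(\N)$, the limit $\bsc$ equals $\cE_\cY(y^*)$ for some $y^*\in\cY$; the $\ell^2_t$-bound on $\bsc$ then gives $\norm[\cY^t]{y^*} \le r$ (so $y^*\in B_r(\cY^t)$) together with $\norm[\cY^{t'}]{y_n - y^*} = \norm[\ell^2_{t'}]{\bsv_n - \bsc} \to 0$. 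The one substantive point is that $w_j \to 0$ forces the weight ratio $w_j^{2(t-t')}$ to vanish, which is the weighted-sequence analogue of the Rellich compact-embedding phenomenon; no real obstacle arises.
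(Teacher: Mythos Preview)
Your proof is correct and follows essentially the same route as the paper: encode into weighted $\ell^2$, extract a coordinatewise convergent subsequence by a diagonal argument, and combine a uniform tail bound via $w_J^{2(t-t')}\to 0$ with convergence of the finite head. The paper stops once the subsequence is shown to be Cauchy in $\cY^{t'}$; you additionally verify via Fatou and the closedness of $\cE_\cY(\cY)$ that the limit lies in $B_r(\cY^t)$, which the paper leaves implicit but is indeed part of compactness.
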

\begin{proof}
  Let $(a_n)_{n\in\N}$ be a sequence in $B_r(\cY^t)$ and denote
    $\bsx_n=\cE(a_n)$, where
    $\bsx_n=(x_{n,j})_{j\in\N}\in\ell^2(\N)$. 
    Then, due to
    $a_n\in B_r(\cY^t)$ holds $\sum_{j\in\N}x_{n,j}^2w_j^{-2t}\le r$
    for all $n\in\N$ and, in particular,
    \begin{equation*}
      x_{n,j} \in [-rw_j^t,rw_j^t]\qquad\forall n,j\in\N.
    \end{equation*}

    Compactness of $[-rw_1^t,rw_1^t]$ implies the existence
    of a subsequence $(\bsx_{1,n})_{n\in\N}$ of
    $(\bsx_n)_{n\in\N}$, such that $(x_{1,n,1})_{n\in\N}$ is a Cauchy
    sequence in $[-rw_1^t,rw_1^t]$. Inductively, let
    $(\bsx_{k,n})_{n\in\N}$ be a subsequence of
    $(\bsx_{k-1,n})_{n\in\N}$ such that $(x_{k,n,k})_{n\in\N}$ is a
    Cauchy sequence in $[-rw_k^t,rw_k^t]$. Then
    $\tilde \bsx_n:=\bsx_{n,n}$ defines a subsequence of
    $(\bsx_n)_{n\in\N}$ with the property that
    $(\tilde x_{n,j})_{n\in\N}$ is a Cauchy sequence for each $j\in\N$.  
    Denote the corresponding sequence in $\cY^t$ by $(\tilde a_n)_{n\in\N}$.

    Now fix $\eps>0$ arbitrary. 
    Let $N(\eps)\in\N$ be so large that
    $w_{N(\eps)}^{2(t-t')}<\frac{\eps}{4r^2}$. 
    Then for all $n\in\N$
    \begin{equation*}
      \sum_{j>N(\eps)}x_{n,j}^2w_{j}^{-2t'}
      \le w_{N(\eps)}^{2(t-t')}
      \sum_{j>N(\eps)}x_{n,j}^2w_{j}^{-2t}\le \frac{\eps}{4}.
    \end{equation*}
    Next,
    since $(\tilde x_{n,j})_{n\in\N}$ is a Cauchy sequence for each
    $j\le N(\eps)$, there exists $M(\eps)\in\N$ so large that
    \begin{equation*}
      \sum_{j=1}^{N(\eps)}|x_{m,j}-x_{n,j}|^2w_j^{-2t'}<\frac{\eps}{2}\qquad
      \forall m,n\ge M(\eps).
    \end{equation*}
    Then for all $m$, $n\ge M(\eps)$ we have
  \begin{equation*}
    \norm[\cY^{t'}]{\tilde a_n-\tilde a_m}\le
    \sum_{j=1}^{N(\eps)}|x_{m,j}-x_{n,j}|^2w_j^{-2t'}
    +2\sum_{j>N(\eps)}(x_{m,j}^2+x_{n,j}^2)w_j^{-2t'}
    \le\eps.
  \end{equation*}
  Thus $(\tilde a_n)_{n\in\N}$ is a Cauchy sequence in
  $\cY^{t'}$. This concludes the proof.
\end{proof}

\section{Proof of Theorem \ref{thm:main} and Theorem
  \ref{thm:main_riesz}}\label{sec:proof}
With $s>1$ and $\sigma_r^s$ as in \eqref{eq:sigma} 
let in the following
  \begin{equation}
    \bG:=\tilde\rF_\cY\circ\cG\circ\sigma_r^s.
  \end{equation}
  Since $\sigma_r^s:U\to\cX$, $\cG:\cX\to\cY$ and
  $\tilde\rF_\cY:\cY\to\ell^2(\N)$ we have $\bG:U\to\ell^2(\N)$.
  In addition, due to $\cD={\rF}_\cY'$ and \eqref{eq:compid} we have that
  $\cD\circ \tilde\rF_\cY$ is the identity on $\cY$ and thus
\begin{equation}\label{eq:u}
  u(\bsy) := ({\calD} \circ \bG)(\bsy) = \cG(\sigma_r^s(\bsy))
\end{equation}
is well-defined for $\bsy\in U$.
To prove
Theorem \ref{thm:main} we  %
first show that $\bG:U\to{\ell^2}$ can be approximated.
\subsection{Auxiliary results}
{
  We start by showing that the functions $\cG:O\to\cY_\C$
  in Assumption \ref{ass:1} are holomorphic w.r.t.\
  the topologies of $\cY_\C^{t'}$ for any $t'<t$.
\begin{lemma}\label{lemma:Gt}
    Let Assumption \ref{ass:1} hold. Then for every
  $t'\in [0,t)$, the map $\cG:O_\C\to \cY^{t'}_\C$ is holomorphic.
\end{lemma}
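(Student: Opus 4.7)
The plan is to upgrade the holomorphy of $\cG$ from $\cY_\C$ to the finer space $\cY^{t'}_\C$ by working at the level of frame coefficients and invoking Weierstrass's theorem for Banach-space-valued holomorphic maps. First I would introduce the coefficient map $\bsc(a):=\tilde{\rF}_\cY(\cG(a))=(\dup{\cG(a)}{\tilde\eta_j})_{j\in\N}$. Since $\tilde{\rF}_\cY:\cY_\C\to\ell^2(\N,\C)$ is bounded linear and $\cG:O_\C\to\cY_\C$ is holomorphic by Assumption~\ref{ass:1}, $\bsc$ is holomorphic as a map into $\ell^2(\N,\C)$, and each coordinate $c_j(a):=\dup{\cG(a)}{\tilde\eta_j}$ is a scalar-holomorphic function on $O_\C$. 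The $\cY_\C^t$-bound on $\cG$ translates, via \eqref{eq:XsYt}, into the uniform estimate $\sum_{j\in\N}|c_j(a)|^2 w_j^{-2t}\le M^2$ for all $a\in O_\C$.

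Next I would introduce the weighted sequence space $\ell^2_{t'}(\N,\C)$ with norm $\sum_j |b_j|^2 w_j^{-2t'}$ and the finite truncations $\bsc_N(a):=(c_1(a),\dots,c_N(a),0,0,\dots)$. Each $\bsc_N$ is a finite linear combination of scalar-holomorphic functions with fixed vectors in $\ell^2_{t'}$, hence itself holomorphic as a map $O_\C\to\ell^2_{t'}(\N,\C)$. The key estimate is
\begin{equation*}
  \norm[\ell^2_{t'}]{\bsc(a)-\bsc_N(a)}^2=\sum_{j>N}|c_j(a)|^2 w_j^{-2t'}\le w_{N+1}^{2(t-t')}\sum_{j\in\N}|c_j(a)|^2 w_j^{-2t}\le M^2 w_{N+1}^{2(t-t')},
\end{equation*}
which uses monotonicity of $(w_j)_{j\in\N}$ and the strict inequality $t'<t$. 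Since $w_N\to 0$, this gives $\bsc_N\to\bsc$ \emph{uniformly} on $O_\C$ in the norm of $\ell^2_{t'}(\N,\C)$, and the Weierstrass convergence theorem for Banach-valued holomorphic functions then yields that $\bsc:O_\C\to\ell^2_{t'}(\N,\C)$ is holomorphic.

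Finally I would observe that, viewed as an operator from $\cY^{t'}_\C$ into $\ell^2_{t'}(\N,\C)$, the analysis operator $\tilde{\rF}_\cY$ is a linear isometry by the very definition of $\norm[\cY^{t'}]{\cdot}$ in \eqref{eq:XsYt}. Completeness of $\cY^{t'}_\C$ (Lemma~\ref{lem:cXs}) makes its range a closed subspace of $\ell^2_{t'}(\N,\C)$, and the open mapping theorem upgrades $\tilde{\rF}_\cY$ to a topological isomorphism onto this range. Since $\bsc(a)=\tilde{\rF}_\cY(\cG(a))$ factors through the range, I can write $\cG=\tilde{\rF}_\cY^{-1}\circ\bsc$ as a map $O_\C\to\cY^{t'}_\C$, which is the composition of a holomorphic map with a bounded linear one, and is therefore holomorphic. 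The main technical obstacle is the middle step: the Weierstrass argument requires the tail of $\bsc(a)$ to decay \emph{uniformly} in $a\in O_\C$, and it is precisely the gap $t-t'>0$ together with $w_N\to 0$ that furnishes this uniformity; everything else reduces to routine transfer of holomorphy through bounded linear maps.
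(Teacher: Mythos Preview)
Your proposal is correct and follows essentially the same route as the paper: finite truncation, the uniform tail estimate $\sum_{j>N}|c_j(a)|^2w_j^{-2t'}\le M^2 w_{N+1}^{2(t-t')}\to 0$, and a Weierstrass-type convergence theorem for Banach-valued holomorphic maps. The only cosmetic difference is that the paper works directly in $\cY_\C^{t'}$ with the partial sums $\cG_n(a)=\sum_{j\le n}\dup{\cG(a)}{\tilde\eta_j}\eta_j$, whereas you carry out the identical argument at the level of the weighted sequence space $\ell^2_{t'}(\N,\C)$ and then pull back via the isometry $\tilde\rF_\cY:\cY_\C^{t'}\to\ell^2_{t'}$; this extra transfer step is sound but not needed if one approximates $\cG$ rather than $\bsc$.
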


To prove Lemma \ref{lemma:Gt}, we use the following result:
\begin{proposition}\label{prop:conthol}
  Let $X$, $Y$, $Z$ be three complex Banach spaces and
  $O\subseteq X$ open, nonempty.
  Let $\iota:Z\to Y$ be a %
  bounded injective linear map. 
  Then the following statements are equivalent:
  \begin{enumerate}
  \item\label{item:Gt1} 
    $\iota\circ\cG:O\to Y$ is holomorphic
    and 
    $\cG:O \to Z$ is continuous,
  \item\label{item:Gt2} 
   $\cG:O\to Z$ is holomorphic.
  \end{enumerate}
\end{proposition}
\begin{proof}
  \ref{item:Gt2} $\Rightarrow$ \ref{item:Gt1}: 
  Holomorphy of $\cG:O\to Z$ implies its continuity, 
  and the composition of
  the holomorphic map $\cG:O\to Z$ with the bounded linear
  (thus holomorphic) map $\iota:Z\to Y$ is holomorphic 
  (as a composition of holomorphic functions).

  \ref{item:Gt1} $\Rightarrow$ \ref{item:Gt2}: Fix a point $x\in O$
  and let $r>0$ be so small that the ball of radius $2r$ around $x$ in
  $X$ is contained in $O$.  
  Additionally fix $h\in X$ with
  ${0<}\norm[X]{h}\le 1$
  and $\xi\in\C$ with $|\xi|<r$ 
  and consider the Bochner integral
  \begin{equation}\label{eq:CauchyBochner}
    \frac{1}{2\pi\ii}\int_{\set{\zeta\in\C}{|\zeta|=r}} \frac{\cG(x+\zeta h)}{\zeta-\xi}\dd\zeta\in Y',
  \end{equation}
  which is understood w.r.t.\ the arclength measure on the
  circle $C_r:=\set{\zeta\in\C}{|\zeta|=r}$, oriented counterclockwise.

Continuity of $\cG: O\to Z$ implies that
$\zeta\mapsto \cG(x+\zeta h)\in Z$ is measurable 
w.r.t.\ Borel $\sigma$-algebra on $C_r$, and that
$\set{\cG(x+\zeta h)}{\zeta\in C_r}\subseteq Z$ is 
a compact (and thus separable) subset of $Z$. 
Hence \eqref{eq:CauchyBochner} is well-defined as a Bochner integral. 
Using (i) the fact that bounded
linear mappings may be exchanged with the Bochner integral and 
(ii) Cauchy's integral formula (for the holomorphic map
$\iota\circ\cG:O\to Y$) in complex Banach spaces, see
\cite[13.3]{SBchae} or \cite[Thm.~1.2.11]{JZDiss} 
for this specific version, 
it holds
\begin{align*}
    \iota\left(\frac{1}{2\pi\ii}\int_{\set{\zeta\in\C}{|\zeta|=r}} \frac{\cG(x+\zeta h)}{\zeta-\xi}\dd\zeta\right)
    &=\frac{1}{2\pi\ii}\int_{\set{\zeta\in\C}{|\zeta|=r}} \frac{\iota\circ \cG(x+\zeta h)}{\zeta-\xi}\dd\zeta\nonumber\\
    &= \iota\circ \cG(x+\xi h)
\end{align*}
for all complex $|\xi|<r$.

Injectivity of $\iota$ yields
\begin{equation*}
\frac{1}{2\pi\ii}\int_{\set{\zeta\in\C}{|\zeta|=r}} \frac{\cG(x+\zeta h)}{\zeta-\xi}\dd\zeta 
=  
\cG(x+\xi h)\in Z.
\end{equation*}
Continuity of $\cG$ implies its local boundedness, and thus we can
conclude that this function is complex differentiable for $\xi\in\C$
with $|\xi|<r$.  This verifies so-called G\^ateaux holomorphy of
$\cG:O\to Z$.  Together with the continuity of this map, this implies
holomorphy in the sense of complex Fr\'echet differentiability
\cite[14.9]{SBchae}.
\end{proof}

}

\begin{proof}[of Lemma \ref{lemma:Gt}]
  Applying Prop.~\ref{prop:conthol} with 
  $X:=\cX_\C$, $Y:=\cY_\C$ and $Z := \cY_\C^{t'}$, 
  in order to show the lemma it suffices to
  check that $\cG:O_\C\to \cY^{t'}$ is continuous, since we trivially
  have a continuous embedding $\iota:\cY_\C^{t'}\to \cY_\C$.

  To do so fix $\eps>0$ and $x\in O_\C$.  
  We need to show that there
  exists $\delta>0$ such that $\norm[\cX_\C]{x-\tilde x}<\delta$
  implies $\norm[\cY_\C^{t'}]{\cG(x)-\cG(\tilde x)}<\eps$. 
  For any
  $n\in\N$
  \begin{align}\label{eq:cGcont}
    &\norm[\cY_\C^{t'}]{\cG(x)-\cG(\tilde x)}^2
    =\sum_{j\in\N}|\dup{\cG(x)-\cG(\tilde x)}{\tilde\eta_j}|^2w_j^{-2t'}\nonumber\\
    &\qquad\le\sum_{j=1}^n|\dup{\cG(x)-\cG(\tilde x)}{\tilde\eta_j}|^2w_j^{-2t'}
      +2(\sup_{j>n}w_j^{2(t-t')})\sum_{j>n}(|\dup{\cG(x)}{\tilde\eta_j}|^2+|\dup{\cG(x')}{\tilde\eta_j}|^2)w_j^{-2t}
      \nonumber\\
    &\qquad\le\norm[\cY_\C]{\cG(x)-\cG(x')}^2\sum_{j=1}^nw_j^{-2t'}
      +4M^2 (\sup_{j>n}w_j^{2(t-t')}),
  \end{align}
  where we have used that
  $\sum_{j\in\N}|\dup{\cG(a)}{\tilde\eta_j}|^2w_j^{-2t}=\norm[\cY_\C^{t}]{\cG(a)}^2\le
  M^2$ for all $\tilde x\in O$ by Assumption \ref{ass:1}.  Since
  $(w_j)_{j\in\N}$ is a null-sequence, by choosing $n$ large enough,
  the second term on the right-hand side of \eqref{eq:cGcont} is less
  than $\eps/2$. Since $\cG:O_\C\to\cY_\C$ is continuous, there exists
  $\delta>0$ depending on $\eps$ and $n$ such that
  $\norm[\cX_\C]{x-\tilde x}<\delta$ implies the first term on the
  right-hand side of \eqref{eq:cGcont} to be less than $\eps/2$. 
  This concludes the proof.
\end{proof}

It is well known, that algebraic decay of the ``input'' sequence
$w_j^s\psi_j$ in \eqref{eq:sigma} is inherited by the Legendre
coefficients of the ``output'' $u(\bsy)$, see for example \cite{CCS15}
and the earlier works \cite{CDS2010,CDS2011} for the analysis for some
specific choices of $\cG$, and \cite{cohen_devore_2015} for the
general analysis.  To provide %
a statement of this type, we 
denote by
$L_n$ the $n$-th Legendre polynomial normalized such that
$\frac{1}{2}\int_{-1}^1L_n(x)^2\dd x = 1$. 
In the following $\cF$
stands for the set of inifite dimensional
multiindices with
''finite support''
\begin{equation*}
  \cF:=\set{(\nu_j)_{j\in\N_0}\in\N_0^\N}{|\bsnu|<\infty},
\end{equation*}
{where $|\bsnu|:=\sum_{j\in\N}\nu_j$. 
For such multiindices and
$\bsy=(y_j)_{j\in\N}\in U=[-1,1]^\N$ we write}
\begin{equation*}
  L_\bsnu(\bsy):=\prod_{j\in\supp\bsnu}L_{\nu_j}(y_j),
\end{equation*}
{where $\supp\bsnu:=\set{j\in\N}{\nu_j\neq 0}$ and}
where an empty product is understood as constant $1$.
With the
infinite product measure $\mu=\otimes_{j\in\N}\frac{\lambda}{2}$ 
on $U=[-1,1]^\N$, we then have $\norm[L^2(U,\mu)]{L_\bsnu}=1$.
As is well known, e.g., \cite[Theorem 2.12]{SchwabGittelson2011},
$(L_\bsnu)_{\bsnu\in\cF}$ is an orthonormal basis of
$L^2(U,\mu)$.
Furthermore, there holds the bound
\begin{equation}\label{eq:Lnubound}
  \norm[L^\infty(U)]{L_\bsnu}\le \prod_{j\in\N}(1+2\nu_j)^{1/2}
\end{equation}
for all $\bsnu\in\cF$, see \cite[\S 18.2(iii) and \S 18.3]{nist} with
our normalization of $L_\bsnu$.

We work with the following theorem, which, apart from giving
an algebraically decaying upper bound on the Legendre coefficients,
additionally provides information on the structure of these upper bounds. 
It is essentially \cite[Theorem 2.2.10]{JZDiss} stated in the current
setting. 
To formulate the result, we first introduce an order-relation on multi-indices.
For $\bsmu=(\mu_j)_{j\in\N}$ and $\bsnu=(\nu_j)_{j\in\N}\in\cF$ we
write $\bsmu\le\bsnu$ iff $\mu_j\le\nu_j$ for all $j\in\N$. 
A set $\Lambda\subseteq\cF$ is \emph{downward closed} iff
$\bsnu\in\Lambda$ implies $\bsmu\in\Lambda$ whenever $\bsmu\le\bsnu$.

\begin{theorem}\label{thm:bpe}
  Let Assumption \ref{ass:1} be satisfied with some $s>1$ and
  $t>0$. Fix $\tau>0$, $p\in (\frac{1}{s},1]$ and $t'\in [0,t)$. 
  For $\bsnu\in \cF$, set
  $\omega_\bsnu:=\prod_{j\in\supp\bsnu}(1+2\nu_j)$ for all
  $\bsnu\in\cF$ (empty products are equal to $1$). 

  Then there exists
  $C>0$ and a sequence $(a_\bsnu)_{\bsnu\in\cF}\in\ell^p(\cF)$ of
  positive numbers such that
  \begin{enumerate}
  \item\label{item:abound} for each $\bsnu\in\cF$
    \begin{equation}\label{eq:anu}
      \omega_\bsnu^\tau\normlr[\cY^{t'}]{\int_U L_\bsnu(\bsy)u(\bsy)\dd\mu(\bsy)}\le C a_\bsnu,
    \end{equation}
  \item\label{item:aprop} there exists an enumeration
    $(\bsnu_i)_{i\in\N}$ of $\cF$ such that $(a_{\bsnu_i})_{i\in\N}$
    is monotonically decreasing, the set
    $\Lambda_N:=\set{a_{\bsnu_i}}{i\le N}\subseteq\cF$ is downward
    closed for each $N\in\N$, and additionally
    {
    \begin{equation}\label{eq:md}
      m(\Lambda_N):=\sup_{\bsnu\in\Lambda_N}|\bsnu|=O(\log(|\Lambda_N|)),\qquad
      d(\Lambda_N):=\sup_{\bsnu\in\Lambda_N}|\supp\bsnu|=o(\log(|\Lambda_N|))
    \end{equation}
    }
    as $N\to\infty$,
  \item\label{item:cprop} 
   {there exist $T>1$ and $t>0$ such that with
    \begin{equation} \label{eq:cprop}
      \bsrho=(\rho_j)_{j\in\N}\qquad\text{where}\qquad\rho_j:=\max\{T,tw_j^{s(p-1)}\}
    \end{equation}
    and $e_\bsnu:=\rho^{\bsnu}$ it holds $(a_\bsnu e_\bsnu)_{\bsnu\in\cF}\in\ell^1(\cF)$
    and $(e_\bsnu^{-1})_{\bsnu\in\cF}\in\ell^{p/(1-p)}$,}
  \item\label{item:conv} 
    the following expansion holds with absolute and uniform convergence:
    \begin{equation*}
      \forall \bsy\in U:\quad 
      u(\bsy) = \sum_{\bsnu\in\cF}L_\bsnu(\bsy) \int_U L_\bsnu(\bsx)u(\bsx)\dd\mu(\bsx)\in\cY^{t'}.
    \end{equation*}
  \end{enumerate}
\end{theorem}

\begin{proof}
  By definition $u(\bsy)=\cG(\sum_{j\in\N}y_jw_j^s\psi_j)$ for all
  $\bsy\in U$. 
  Our choice of the sequence $\bsw$ guarantees
  $\sum_{j\in\N}\norm[\cX]{w_j^s\psi_j}^p \le\Lambda_\bsPsi^{p/2}
  \sum_{j\in\N}w_j^{sp}<\infty$, where we used that
    $\norm[\cX]{\psi_j}\le\Lambda_\bsPsi^{1/2}$ by
    \eqref{eq:rmk:FrBd}.
  By Lemma \ref{lemma:Gt} the map $\cG:O_\C\to \cY^{t'}_\C$
  is holomorphic and uniformly bounded in norm by $M$, where by
  Assumption \ref{ass:1} the set
  $\set{\sum_{j\in\N}y_jw_j^s\psi_j}{\bsy\in U}$ is contained in
  $O_\C$. Thus $u(\bsy):=\cG(\sigma_r^s(\bsy))$ satisfies \cite[Assumption
  1.3.7]{JZDiss}.

  Now \cite[Theorem 2.2.10 (i) and (ii)]{JZDiss} with ``$k=1$'' give
  the existence of $(a_\bsnu)_{\bsnu\in\cF}\in\ell^p(\cF)$ satisfying
  item \ref{item:abound} of the current theorem. 
{Item \ref{item:aprop}
  is a consequence of \cite[Theorem 2.2.10 (iii)]{JZDiss} and
  \cite[Lemma 1.4.15]{JZDiss}. We apply \cite[Theorem 2.2.10
  (iii)]{JZDiss} here with the set $\mathfrak{J}$ occurring in
  \cite[Theorem 2.2.10 (iii)]{JZDiss} chosen as $\mathfrak{J}=\N_0$. Item
  \ref{item:cprop} then holds by \cite[Theorem 2.2.10 (iii)]{JZDiss}.}
  Finally \ref{item:conv} holds by \cite[Corollary 2.2.12]{JZDiss}.
\end{proof}

To approximate the bounded function $u:U\to\cY$ in \eqref{eq:u}, we
first expand it in the %
frame
$(\eta_jL_\bsnu(\bsy))_{j,\bsnu}$ of $L^2(U,\mu;\cY)$:
\begin{subequations}\label{eq:expansion}
\begin{equation}
  u(\bsy) = \cG(\sigma_r^s(\bsy)) = \sum_{j\in\N}\sum_{\bsnu\in\cF} c_{\bsnu,j}\eta_j
  L_\bsnu(\bsy)
\end{equation}
with coefficients
\begin{equation}
  c_{\bsnu,j}:=\int_U L_\bsnu(\bsy)
  \dup{\cG(\sigma_r^s(\bsy))}{\tilde{\eta}_j}\dd\mu(\bsy)
\end{equation}
\end{subequations}
with convergence in $L^2(U,\mu;\cY)$. We have the following weighted
bound on these coefficients:

\begin{proposition}\label{prop:weightsum}
  Consider the setting of Theorem \ref{thm:bpe}. 
  Then for each
  $\bsnu\in\cF$
  \begin{equation*}
    \omega_\bsnu^2 \sum_{j\in\N}w_j^{-2t'}c_{\bsnu,j}^2\le C^2 a_\bsnu^2.
  \end{equation*}
\end{proposition}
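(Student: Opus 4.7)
The plan is to reexpress the weighted sum $\sum_{j\in\N} w_j^{-2t'} c_{\bsnu,j}^2$ as the squared $\cY^{t'}$-norm of a single Bochner integral, and then apply Theorem \ref{thm:bpe}\ref{item:abound} with $\tau=1$. Recall from \eqref{eq:u} that $u(\bsy) = \cG(\sigma_r^s(\bsy))$, so
\begin{equation*}
  c_{\bsnu,j} = \int_U L_\bsnu(\bsy)\,\dup{u(\bsy)}{\tilde\eta_j}\dd\mu(\bsy).
\end{equation*}

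First I would check that the Bochner integral $I_\bsnu := \int_U L_\bsnu(\bsy)\, u(\bsy)\dd\mu(\bsy)$ is well-defined in $\cY^{t'}$. By Lemma \ref{lemma:Gt} the map $\cG: O_\C \to \cY^{t'}$ is holomorphic and uniformly bounded on a superset of $\tilde C_r^s(\cX)\ni\sigma_r^s(\bsy)$, hence $\bsy\mapsto u(\bsy)$ is uniformly bounded in $\cY^{t'}$; combined with $\|L_\bsnu\|_{L^\infty(U)}\le\prod_{j}(1+2\nu_j)^{1/2}=\omega_\bsnu^{1/2}$ from \eqref{eq:Lnubound} this yields Bochner-integrability. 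Then the continuous linear functional $\dup{\cdot}{\tilde\eta_j}$ commutes with the integral, giving
\begin{equation*}
  c_{\bsnu,j} = \duplr{I_\bsnu}{\tilde\eta_j}.
\end{equation*}

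Plugging this into the definition \eqref{eq:XsYt} of $\norm[\cY^{t'}]{\cdot}$ produces
\begin{equation*}
  \sum_{j\in\N} w_j^{-2t'} c_{\bsnu,j}^2 = \sum_{j\in\N} w_j^{-2t'}\duplr{I_\bsnu}{\tilde\eta_j}^2 = \norm[\cY^{t'}]{I_\bsnu}^2.
\end{equation*}
Multiplying both sides by $\omega_\bsnu^2$ and invoking \eqref{eq:anu} with $\tau=1$ (applied in its squared form) gives $\omega_\bsnu^2\norm[\cY^{t'}]{I_\bsnu}^2 \le C^2 a_\bsnu^2$, which is the claim.

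There is really no serious obstacle: the only point needing any care is the exchange of the integral and the evaluation functional, and that is routine once Lemma \ref{lemma:Gt} has provided the uniform $\cY^{t'}$-bound on $u$. The whole proposition is, in essence, just the observation that the $\cY^{t'}$-norm of the $\bsnu$-th vector-valued Legendre coefficient of $u$ coincides with the $\ell^2$-norm (weighted by $w_j^{-2t'}$) of the scalar Legendre coefficients of its components $\dup{u}{\tilde\eta_j}$, combined with the a priori bound from Theorem \ref{thm:bpe}.
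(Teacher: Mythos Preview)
Your proof is correct and follows essentially the same route as the paper's own argument: identify $\sum_{j}w_j^{-2t'}c_{\bsnu,j}^2$ with $\norm[\cY^{t'}]{\int_U L_\bsnu u\,\rd\mu}^2$ via the definition of the $\cY^{t'}$-norm, and then square \eqref{eq:anu}. The paper does this in two lines without spelling out the Bochner-integrability check or the commutation of $\dup{\cdot}{\tilde\eta_j}$ with the integral, whereas you make these routine points explicit; your specification ``with $\tau=1$'' is exactly what is needed to land on the factor $\omega_\bsnu^2$ in the statement.
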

\begin{proof}
  It holds
  \begin{equation*}
    \normlr[\cY^{t'}]{\int_UL_\bsnu(\bsy) u(\bsy)\dd\mu(\bsy)}^2=
    \sum_{j\in\N}w_j^{-2t'}\duplr{\int_U L_\bsnu(\bsy) u(\bsy)\dd\mu(\bsy)}{{\tilde \eta_j}}^2
    =\sum_{j\in\N}w_j^{-2t'}c_{\bsnu,j}^2.
  \end{equation*}
  Together with %
  \eqref{eq:anu} this gives the statement.
\end{proof}

Before proving the main statement we need one more lemma.

\begin{lemma}\label{lemma:ml}
  Let $\alpha > 1$, $\beta>0$ and assume given two 
  sequences $(a_i)_{i\geq 1}, (d_j)_{j\geq 1}\subset (0,\infty)^\N$
   with 
  $a_i\lesssim i^{-\alpha}$ and $d_j\lesssim j^{-\beta}$ 
  for all $i$, $j\in\N$. 
  Assume that additionally $(d_j)_{j\in\N}$ is monotonically decreasing.
  Suppose that
  there exists $C<\infty$ such that the sequence
  $(c_{i,j})_{j,i\in\N}$ satisfies 
  \begin{equation*}
\forall i\in \N:\quad \sum_{j\in\N}c_{i,j}^2d_j^{-2}\le C^2 a_i^2.
  \end{equation*}
  Then for every $\delta>0$
  \begin{enumerate}
  \item for every $N\in\N$ exists $(m_i)_{i\in\N}\subseteq\N_0$
    monotonically decreasing such that $\sum_{i\in\N}m_i\le N$ and
  \begin{equation*}
    \sum_{i\in\N}\left(\sum_{j> m_i}c_{i,j}^2\right)^{1/2}\lesssim N^{-\min\{\alpha-1,\beta\}+\delta},
  \end{equation*}
\item for every $n\in\N$ exists $(m_i)_{i\in\N}\subseteq\N_0$
  monotonically decreasing
  such that $\sum_{i\in\N}m_i\le N$ and
  \begin{equation*}
    \left(\sum_{i\in\N}\sum_{j> m_i}c_{i,j}^2\right)^{1/2}\lesssim N^{-\min\{\alpha-\frac{1}{2},\beta\}+\delta}.
  \end{equation*}
  \end{enumerate}
\end{lemma}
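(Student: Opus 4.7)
The plan is to reduce both (i) and (ii) to a deterministic discrete allocation problem for the per-index truncation depths $m_i$, and then to solve it with a floor-of-power-law ansatz. First, monotonicity of $(d_j)_{j\in\N}$ combined with the weighted hypothesis yields the per-index tail bound
\[
\sum_{j>m}c_{i,j}^2
\le d_{m+1}^2\sum_{j>m}c_{i,j}^2 d_j^{-2}
\le C^2 a_i^2\, d_{m+1}^2,
\]
and inserting $a_i\lesssim i^{-\alpha}$ and $d_j\lesssim j^{-\beta}$ gives $\bigl(\sum_{j>m}c_{i,j}^2\bigr)^{1/2}\lesssim i^{-\alpha}(m+1)^{-\beta}$. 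Hence it suffices to construct a monotonically decreasing $(m_i)_{i\in\N}\subset\N_0$ with $\sum_i m_i\le N$ such that
\[
\sum_i i^{-\alpha}(m_i+1)^{-\beta}\lesssim N^{-\min\{\alpha-1,\beta\}+\delta}
\]
for (i), and the analogous $\ell^1$-bound for $\sum_i i^{-2\alpha}(m_i+1)^{-2\beta}$ with exponent $-2\min\{\alpha-1/2,\beta\}+2\delta$ for (ii), from which (ii) then follows by taking a square root.

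For the allocation I would take $m_i:=\lfloor K\,i^{-\rho}\rfloor$ with parameters $K=K(N,\rho)>0$ and $\rho=\rho(\alpha,\beta,\delta)>0$ to be chosen. Monotonicity is then automatic, and $m_i=0$ precisely for $i>I:=\lfloor K^{1/\rho}\rfloor$, so
\[
\sum_i i^{-\alpha}(m_i+1)^{-\beta}
\lesssim K^{-\beta}\sum_{i\le I}i^{\rho\beta-\alpha}+\sum_{i>I}i^{-\alpha},
\]
with the second summand bounded by $I^{1-\alpha}$ (using $\alpha>1$). Two regimes arise according to which of $\alpha-1$, $\beta$ is smaller. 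In the regime $\alpha-1>\beta$ I would take $\rho\in(1,(\alpha-1)/\beta)$ close to $(\alpha-1)/\beta$: the series $\sum_i i^{-\rho}$ converges, so the constraint reduces to $K\simeq N$, giving $I\simeq N^{1/\rho}$, the inner sum is $O(1)$, and the tail is $\lesssim N^{(1-\alpha)/\rho}\lesssim N^{-\beta+\delta}$. In the regime $\alpha-1\le\beta$ I would take $\rho\in(0,1)$ close to $1$: the constraint then forces $K^{1/\rho}\simeq N$, hence $I\simeq N$, the tail is $\lesssim N^{-(\alpha-1)}$, and a direct elementary estimate of $K^{-\beta}\sum_{i\le I}i^{\rho\beta-\alpha}$ also gives $\lesssim N^{-(\alpha-1)+\delta}$. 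Part (ii) follows from the identical scheme with $(\alpha,\beta)$ replaced by $(2\alpha,2\beta)$ throughout, the transition now occurring at $2\alpha-1=2\beta$, i.e.\ at $\alpha-1/2=\beta$.

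The main obstacle is the case analysis at the boundaries $\alpha-1=\beta$ (resp.\ $\alpha-1/2=\beta$), where the inner sum $\sum_{i\le I}i^{\rho\beta-\alpha}$ degenerates into a logarithmic one in $I\simeq N$. The cushion $\delta>0$ is introduced precisely so that, by keeping $\rho$ a small strictly positive distance inside the relevant open interval on each side, such $\log$ factors are absorbed into $N^\delta$. The integer- and monotonicity constraints on $(m_i)$ are built into the floor-of-power-law construction and thus require no separate argument.
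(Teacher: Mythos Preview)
Your proposal is correct and follows essentially the same approach as the paper: both reduce to the tail bound $(\sum_{j>m}c_{i,j}^2)^{1/2}\lesssim a_i d_{m+1}\lesssim i^{-\alpha}(m+1)^{-\beta}$ via monotonicity of $(d_j)$, and then use a power-law allocation $m_i\simeq K i^{-\rho}$ truncated to zero beyond a cutoff. The paper fixes $\rho=(\alpha-1)/\beta$ (writing $m_i=\lceil(n/i)^{(\alpha-1)/\beta}\rceil$ for $i\le n$), obtains the error bound $n^{-(\alpha-1)}\log n$ uniformly, and then distinguishes three cases when estimating $M=\sum_i m_i$; you instead vary $\rho$ across the two regimes so that the inner sum $\sum_{i\le I}i^{\rho\beta-\alpha}$ stays away from the logarithmic exponent, trading the paper's case split on $M$ for a case split on $\rho$. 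Both routes absorb the borderline $\alpha-1=\beta$ (resp.\ $\alpha-\tfrac12=\beta$) into the $\delta$, and part (ii) is handled identically by the substitution $(\alpha,\beta)\to(2\alpha,2\beta)$.
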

\begin{proof}
  Fix $n\in\N$. Set
  $m_i:=\lceil(\frac{n}{i})^{(\alpha-1)/\beta}\rceil$ for $i\le n$ and
  $m_i:=0$ otherwise. For $i\le n$, since $(d_j)_{j\in\N}$ was assumed
  monotonically decreasing,
  \begin{equation*}
    \left(\sum_{j>m_i}c_{i,j}^2\right)^{1/2}
    =
    \left(\sum_{j>m_i}c_{i,j}^2d_{j}^2 d_j^{-2}\right)^{1/2}
    \le C d_{m_i} a_i\lesssim
    m_i^{-\beta}i^{-\alpha}
  \end{equation*}
  and for $i>n$ we have
  $(\sum_{j>m_i}c_{i,j}^2)^{1/2}=(\sum_{j\ge 1}c_{i,j}^2)^{1/2}\lesssim a_i\lesssim
  i^{-\alpha}$. Thus
  \begin{align*}
    \sum_{i\in\N}\left(\sum_{j> m_i}c_{i,j}^2\right)^{1/2}
    \lesssim \sum_{i\le n} m_i^{-\beta}i^{-\alpha}+C \sum_{i>n}i^{-\alpha}
    \lesssim \sum_{i\le n} \left(\frac{n}{i}\right)^{-\alpha+1}i^{-\alpha}+n^{-\alpha+1}
    \lesssim n^{-\alpha+1}\log(n).
  \end{align*}
In addition,
  \begin{equation*}
    \sum_{j\in\N}m_j\lesssim n+n^{\frac{\alpha-1}{\beta}}\int_1^{n+1} x^{-\frac{\alpha-1}{\beta}}\dd x
    \lesssim
    n+n^{\frac{\alpha-1}{\beta}}
    \begin{cases}
      1&\text{if }\frac{\alpha-1}{\beta}>1\\
      \log(n) &\text{if }\frac{\alpha-1}{\beta}=1\\
      n^{1-\frac{\alpha-1}{\beta}}&\text{if }\frac{\alpha-1}{\beta}<1
    \end{cases}
    \lesssim
    \begin{cases}
      n^{\frac{\alpha-1}{\beta}}&\text{if }\frac{\alpha-1}{\beta}>1\\
      n\log(n) &\text{if }\frac{\alpha-1}{\beta}=1\\
      n&\text{if }\frac{\alpha-1}{\beta}<1.
    \end{cases}    
  \end{equation*}
  With $M:=\sum_{j\in\N}m_j$ we get
  \begin{equation*}
    \sum_{i\in\N}\left(\sum_{j> m_i}c_{i,j}^2\right)^{1/2}\lesssim
    \begin{cases}
      M^{-\beta+\delta}&\text{if }\alpha-1\ge \beta\\
      M^{-\alpha+1+\delta}&\text{if }\alpha-1<\beta.
    \end{cases}        
  \end{equation*}
  Choosing $n(N)$ appropriately we can guarantee $M(n)\sim N$.

  For the second item
  fix again $n\in\N$
  and set $m_i:=\lceil(\frac{n}{i})^{(2\alpha-1)/(2\beta)}\rceil$
  for $i\le n$ and $m_i:=0$ otherwise. For $i\le n$
  \begin{equation*}
    \sum_{j> m_i}c_{i,j}^2
    =
    \sum_{j> m_i}c_{i,j}^2d_{j}^2 d_j^{-2}
    \le C d_{m_i}^2 a_i^2\lesssim
    m_i^{-2\beta}i^{-2\alpha},
  \end{equation*}
  and for $i>n$ we have
  $\sum_{j> m_i}c_{i,j}^2\lesssim a_i^2\lesssim i^{-2\alpha}$.
  With the same calculation as in the first case (but with
  $\alpha$, $\beta$ replaced by $2\alpha$, $2\beta$ respectively)
  we get with $M:=\sum_{j\in\N}m_j$
  \begin{equation*}
    \sum_{i\in\N}\left(\sum_{j> m_i}c_{i,j}^2\right)\lesssim
    \begin{cases}
      M^{-2\beta+\delta}&\text{if }2\alpha-1\ge 2\beta\\
      M^{-2\alpha+1+\delta}&\text{if }2\alpha-1< 2\beta.
    \end{cases}        
  \end{equation*}
  This concludes the proof.
\end{proof}

\begin{remark} \label{rmk:OptRate}
The {bounds} %
in the above lemma are optimal as can be checked.
\end{remark}
\subsection{Proof in a particular case}
We prove a particular case of Theorem~\ref{thm:main}, with fixed
parameter range $y_j \in [-1,1]$.
\begin{theorem}\label{thm:tg}
  Let Assumption \ref{ass:1} be satisfied for some $s>1$ and $t>0$.
  Fix $\delta>0$ (arbitrarily small).
  
  Then there exists a constant $C>0$ such that for every $N\in\N$
  exists a ReLU NN $\tilde{\bG}_N$  %
  of size $O(N\log(N)^5)$ such that
  \begin{equation}\label{eq:linferr}
    \sup_{\bsy\in U}\normlr[\cY]{\cG(\sigma_r^s(\bsy))-\cD(\tilde{\bG}_N(\bsy))}
    \le 
    C N^{-\min\{s-1,t\}+\delta}
  \end{equation}
  and
  \begin{equation}\label{eq:l2err}
    \normlr[L^2(U,\cY)]{\cG(\sigma_r^s(\bsy))-\cD(\tilde{\bG}_N(\bsy))}
    \le 
    C N^{-\min\{s-\frac{1}{2},t\}+\delta}.
  \end{equation}
\end{theorem}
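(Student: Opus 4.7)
The plan is to first approximate $u(\bsy):=\cG(\sigma_r^s(\bsy))$ by a suitable finite truncation of its Legendre expansion with values in a finite-dimensional subspace of $\cY$, and then to emulate the resulting polynomial by a ReLU network of the claimed size. By Theorem~\ref{thm:bpe}(iii) we have $u(\bsy) = \sum_{\bsnu\in\cF} L_\bsnu(\bsy)\hat u_\bsnu$ with $\hat u_\bsnu := \int_U L_\bsnu(\bsy) u(\bsy)\dd\mu(\bsy)\in\cY^{t'}$. Taking the enumeration $\{\bsnu_i\}_{i\in\N}$ and the downward-closed index sets $\Lambda_N=\{\bsnu_i:i\le N\}$ from part~(ii) of that theorem, expand each coefficient in the $\cY$-frame as $\hat u_{\bsnu_i}=\sum_{j\in\N} c_{\bsnu_i,j}\eta_j$ and define the polynomial approximant
\[
u_N(\bsy) := \sum_{i\in\N} L_{\bsnu_i}(\bsy)\sum_{j\le m_i} c_{\bsnu_i,j}\eta_j
\]
for nonnegative integers $m_i$ subject to $\sum_i m_i\le N$ and $m_i=0$ for all sufficiently large $i$.

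A triangle-inequality argument combined with the upper frame bound $\Lambda_{\bsPsi_\cY}$ and the estimate $\|L_{\bsnu_i}\|_{L^\infty}\le\omega_{\bsnu_i}^{1/2}$ from~\eqref{eq:Lnubound} gives
\[
\sup_{\bsy\in U}\|u(\bsy)-u_N(\bsy)\|_\cY \;\lesssim\; \sum_{i\in\N}\omega_{\bsnu_i}^{1/2}\Bigl(\sum_{j>m_i}c_{\bsnu_i,j}^2\Bigr)^{\!1/2},
\]
while orthonormality of $(L_{\bsnu_i})_{i\in\N}$ in $L^2(U,\mu)$ yields $\|u-u_N\|_{L^2(U,\mu;\cY)}^2 \lesssim \sum_{i\in\N}\sum_{j>m_i}c_{\bsnu_i,j}^2$. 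Setting $\hat c_{i,j}:=\omega_{\bsnu_i}^{1/2}c_{\bsnu_i,j}$ for the first estimate (respectively $c_{\bsnu_i,j}$ for the second) and $d_j:=w_j^{t'}$, Proposition~\ref{prop:weightsum} together with $\omega_{\bsnu_i}\ge 1$ yields $\sum_j \hat c_{i,j}^2 d_j^{-2}\le C^2 a_i^2$. Since $(a_\bsnu)\in\ell^p(\cF)$ for every $p\in(1/s,1]$, we have $a_i\lesssim i^{-\alpha}$ for any $\alpha<s$, and $\bsw^{1+\epsilon}\in\ell^1$ together with $t'<t$ arbitrary gives $d_j\lesssim j^{-\beta}$ for any $\beta<t$. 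Applying parts~(i) and~(ii) of Lemma~\ref{lemma:ml} with total budget $\sum_i m_i\le N$ then delivers the polynomial-approximation rates $N^{-\min\{s-1,t\}+\delta}$ in $L^\infty$ and $N^{-\min\{s-1/2,t\}+\delta}$ in $L^2$.

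To realize $u_N$ as a ReLU network I exploit the structural information from Theorem~\ref{thm:bpe}(ii): $u_N$ uses at most $N$ monomials, depends on only $d(\Lambda_N)=o(\log N)$ variables, and has total degree at most $m(\Lambda_N)=O(\log N)$. Standard ReLU emulation results for approximate multiplication gates and for univariate Legendre polynomials allow each $L_{\bsnu_i}$ to be emulated to pointwise accuracy $\eps$ by a subnetwork of size polylogarithmic in $N$ and $1/\eps$. Assembling the $N$ monomials through a linear output layer that realizes the finite sums $\sum_{i:\,m_i\ge j} c_{\bsnu_i,j}L_{\bsnu_i}(\bsy)$, for $j\le\max_i m_i$, yields a total size of order $N\cdot\mathrm{polylog}(N/\eps)$; choosing $\eps$ as a sufficiently large negative power of $N$ absorbs the emulation error into the polynomial-truncation error and gives the claimed size $O(N\log(N)^5)$ without deteriorating the rates~\eqref{eq:linferr}, \eqref{eq:l2err}.

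The main obstacle is the last step: the $\log^5$ factor only emerges from careful tracking of how the accuracy $\eps$, the maximum degree $m(\Lambda_N)$, the active dimension $d(\Lambda_N)$, and the overhead from chaining multiplications interact in the network size bound. This relies crucially on the polylogarithmic control $m(\Lambda_N)=O(\log N)$ and $d(\Lambda_N)=o(\log N)$ from Theorem~\ref{thm:bpe}(ii); without it the $N$-term polynomial would not admit a near-linear-size NN emulator, and the precise exponent of the logarithm has to be read off from the chosen multiplication/polynomial emulation blocks.
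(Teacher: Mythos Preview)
Your proposal is correct and follows essentially the same route as the paper: truncate the Legendre expansion using the budget allocation from Lemma~\ref{lemma:ml} (fed by Proposition~\ref{prop:weightsum} and the decay $a_{\bsnu_i}\lesssim i^{-s+\delta/2}$), then replace each $L_{\bsnu_i}$ by a ReLU surrogate and control the additional emulation error separately. The only substantive detail you leave implicit is the precise NN emulation result; the paper invokes \cite[Proposition~2.13]{OSZ21}, which under \eqref{eq:md} gives ${\rm size}((\tilde L_\bsnu)_{\bsnu\in\Lambda_N})=O(N\log(N)^4\log(1/\gamma))$, and then takes $\gamma=N^{-s+1/2}$ to arrive at the $O(N\log(N)^5)$ bound---exactly the careful tracking you flag in your last paragraph.
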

\begin{proof}
  Let $(a_\bsnu)_{\bsnu\in\cF}$ and the enumeration
  $(\bsnu_i)_{i\in\N}$ be as in Theorem \ref{thm:bpe} (with ``$\tau$''
  in this theorem being $1/2$), so that $(a_{\bsnu_i})_{i\in\N}$ is
  monotonically decreasing and belongs to $\ell^p(\N)$, where we fix
  $p\in (\frac{1}{s},1]$ such that $\frac{1}{p}\ge s-\delta/2$. Note
  that due to $i a_{\bsnu_i}^p\le \sum_{j\in\N}a_{\bsnu_j}^p<\infty$
  this implies $a_{\bsnu_i}\lesssim i^{-1/p} \le i^{-s+\delta/2}$.

  Fix $N\in\N$ and set $\Lambda_N:=\set{\bsnu_j}{j\le
    N}\subset\cF$, which is a downward closed set by Theorem
  \ref{thm:bpe}.  By \cite[Proposition 2.13]{OSZ21}, for every $0<
  \gamma < 1$ there exists a ReLU NN $(\tilde
  L_\bsnu)_{\bsnu\in\Lambda_{N}}$ such that
    \begin{equation*}
      \sup_{\bsy\in U}\sup_{\bsnu\in\Lambda_{N}}|L_\bsnu(\bsy)-\tilde L_\bsnu(\bsy)|\le \gamma,
    \end{equation*}
    and using \eqref{eq:md} one has the bound
    \begin{equation}\label{eq:sizeLnu}
      {\rm size}((\tilde L_\bsnu)_{\bsnu\in\Lambda_N})
      =O(N\log(N)^4\log(1/\gamma))
    \end{equation}
    on the network size.
    The constant hidden in $O(\cdot)$ is independent of
    $N$ and $\gamma$.  In the following fix for $N\in \bbN$, $N\ge
    2$, the accuracy $\gamma:=N^{-s+\frac{1}{2}} \in
    (0,1)$.  With these choices, the right-hand side of
    \eqref{eq:sizeLnu} is $O(N\log(N)^5)$.

    Next fix $t'\in [0,t)$ such that $t'>t-\delta/2$. By
    Proposition \ref{prop:weightsum}
    with $\omega_{\bsnu}:=\prod_{j\in\supp\bsnu}(1+2\nu_j)\ge 1$
    we have for every $i\in\N$
    \begin{subequations}\label{eq:lemmaass}
    \begin{equation}
      \omega_{\bsnu_i}^{1/2} \left(\sum_{j\in\N}w_j^{-2t'} c_{\bsnu_i,j}^2\right)^{1/2}\lesssim a_{\bsnu_i}
      \lesssim i^{-s+\frac{\delta}{2}}.
    \end{equation}
    Due to
    $(w_j^{t'})_{j\in\N}\in\ell^{1/(t-\delta/2)}(\N)$, by the same
    argument as above (using that $(w_j)_{j\in\N}$ was assumed
    monotonically decreasing) it holds
    \begin{equation}
      w_j^{t'}\lesssim j^{-t+\frac{\delta}{2}}.
    \end{equation}
    \end{subequations}
    We now show \eqref{eq:linferr} and \eqref{eq:l2err} separately.

    \begin{enumerate}
    \item Due to \eqref{eq:lemmaass}
      with $\alpha:=s-\delta/2$
    and $\beta:=t-\delta/2$, by Lemma \ref{lemma:ml} we can find
    $(m_i)_{i\in\N}\subset\N_0$ such that $\sum_{i\in\N}m_i\le N$ and
    \begin{equation}\label{eq:suminf}
      \sum_{i\in\N}\omega_{\bsnu_i}^{1/2}\left(\sum_{j> m_i}c_{\bsnu_i,j}^2\right)^{1/2} \le N^{-\min\{s-1,t\}+\delta}.
    \end{equation}
    Now define for $j\in\N$ (where an empty sum is equal to $0$)
    \begin{equation}\label{eq:tgj}
      \tilde g_j(\bsy):=\sum_{\set{i\in\N}{m_i\ge j}} \tilde L_{\bsnu_i}(\bsy) c_{\bsnu,j}.
    \end{equation}
    Then by \eqref{eq:expansion} with $\tilde \bG_N = (\tilde g_j)_{j\in\N}$
    for all $\bsy\in U$
    \begin{align*}
      &\norm[\cY]{\cG(\sigma_r^s(\bsy))-\cD(\tilde g(\bsy))}
      =\normlr[\cY]{\sum_{i,j\in\N}c_{\bsnu_i,j}L_{\bsnu_i}(\bsy)\eta_j
      - \sum_{i\in\N}\sum_{j\le {{m_i}}}c_{\bsnu_i,j}\tilde L_{\bsnu_i}(\bsy)\eta_j}\nonumber\\
      &\qquad\qquad\le
        \normlr[\cY]{\sum_{i\in\N}L_{\bsnu_i}(\bsy)\sum_{j>{{m_i}}}c_{\bsnu_i,j}\eta_j}
        +\normlr[\cY]{\sum_{i\in\N}(L_{\bsnu_i}(\bsy) - \tilde L_{\bsnu_i}(\bsy))\sum_{j\le {{m_i}}}\eta_j c_{\bsnu_i,j}}\nonumber\\
      &\qquad\qquad 
        \le \Lambda_\cY \sum_{i\in\N}\norm[L^\infty(U)]{L_{\bsnu_i}}
        \left(\sum_{j>{{m_i}}}c_{\bsnu_i,j}^2\right)^{1/2}
        + \Lambda_\cY \gamma \sum_{i\in\N}\left(\sum_{j\le {{m_i}}}c_{\bsnu_i,j}^2 \right)^{1/2},
    \end{align*}
    where $\Lambda_\cY$ denotes the upper frame constant in
    \eqref{eq:FrBd} for the frame $\bsPsi_\cY=(\eta_j)_{j\in\N}$,
    cp.~Rmk.~\ref{rmk:FrBd}.
    By \eqref{eq:Lnubound} and \eqref{eq:suminf} the first term is
    $O(N^{-\min\{s-1,t\}+\delta})$ and the second term is
    $O(\gamma) = O(N^{-\min\{s-1/2,t\}})$ which shows the error bound
    \eqref{eq:linferr}.

    The size of $\tilde\bG_N = (\tilde g_j)_{j\in\N}$ in \eqref{eq:tgj}
    is bounded by
    \begin{align*}
      {\rm size}((\tilde L_\bsnu)_{\bsnu\in\Lambda_N})
      + \sum_{j\in\N} |\set{i\in\N}{m_i\ge j}|
      &= O(N\log(N)^5)+\sum_{i\in\N}\sum_{j\le m_i}1\nonumber\\
      &= O(N\log(N)^5),
    \end{align*}
    since $\sum_{i\in\N}m_i\le N$.

    \item
      Due to \eqref{eq:lemmaass}
      with $\alpha:=s-\delta/2$
    and $\beta:=t-\delta/2$, by Lemma \ref{lemma:ml} we can find
    $(m_i)_{i\in\N}\subset\N_0$ such that $\sum_{i\in\N}m_i\le N$ and
    \begin{equation}\label{eq:suminfl2}
      \sum_{i\in\N}\sum_{j> m_i}c_{\bsnu_i,j}^2 \le N^{-\min\{s-\frac{1}{2},t\}+\delta}.
    \end{equation}
    The rest of the calculation is similar as in the first case.
    Let $\tilde g_j$ be as in \eqref{eq:tgj}.
    Then by \eqref{eq:expansion} and because $(L_\bsnu(\bsy)\eta_j)_{\bsnu,j}$
    is %
    {a frame}
    of $L^2(U,\mu;\cY)$
    \begin{align*}
      &\norm[L^2(U,\cY)]{\cG(\sigma_r^s(\bsy))-\cD((\tilde g_j(\bsy))_{j\in\Lambda_{\N,\epsilon}})}
        \le \normlr[L^2(U,\cY)]{\sum_{i\in\N}\sum_{j>m_i} c_{\bsnu,j} L_{\bsnu_i}(\bsy)\eta_j}
      \\
      &\qquad\qquad\quad 
      +  \normlr[L^2(U,\cY)]{\sum_{i\in\N}\sum_{j\le m_i}
         c_{\bsnu,j}\eta_j |L_\bsnu(\bsy)-\tilde L_\bsnu(\bsy)|}\nonumber\\
      &\qquad\qquad
       \le \Lambda_\cY \left(\sum_{i\in\N}\sum_{j>m_i}c_{\bsnu,j}^2 \right)^{1/2}
         + \Lambda_\cY \gamma \left(\sum_{i\in\N}\sum_{j\le m_i}c_{\bsnu_i,j}^2\right)^{1/2},
    \end{align*}
    where $\Lambda_\cY$ denotes again the upper frame constant in
    \eqref{eq:FrBd} for the frame $\bsPsi_\cY=(\eta_j)_{j\in\N}$.
    By \eqref{eq:suminfl2} the first term is
    $O(N^{-\min\{s-1/2,t\}+\delta})$ and the second term is
    $O(\gamma)=O(N^{-\min\{s-1/2,t\}})$, which shows the error bound
    \eqref{eq:l2err}.

    The size of $\tilde \bG_N$ is bounded in the same way as in the first case.
\end{enumerate}
\end{proof}
\subsection{Proof of Theorem \ref{thm:main} in the general case}\label{sec:gen1}
We obtain Theorem~\ref{thm:main} from Theorem~\ref{thm:tg} by a
scaling argument, via the weight sequences which characterize the
admissible input data.

Introduce the scaling %
\begin{equation}\label{eq:scaling}
S=\times_{j\in\N}[-rw_j^s,rw_j^s]\to U:
(x_j)_{j\in\N}\mapsto \Big(\frac{x_j}{rw_j^s}\Big)_{j\in\N},
\end{equation}
where
$U=[-1,1]^\N$. 
Then (cp.~\eqref{eq:ED} and \eqref{eq:Crs})
    \begin{equation*}
      S\circ\cE(a)\in U\qquad\forall a\in C_r^s(\cX).
    \end{equation*}
  Let $\tilde\bG_N:\ell^2(\N)\to\ell^2(\N)$ be as in Theorem \ref{thm:tg}. 
  Since $\sigma_r^s:U\to \tilde C_r^s(\cX)$ is
  surjective and $\tilde C_r^s(\cX)\supseteq C_r^s(\cX)$ (see
  Rmk.~\ref{rmk:riesz}), \eqref{eq:linferr} in Theorem \ref{thm:tg}
  implies with $\hat \bG_N:= \tilde \bG_N \circ
  S$ %
$$
\begin{array}{l}
\displaystyle
      \sup_{a\in C_r^s(\cX)}\norm[\cY]{\cG(a)-\cD(\hat \bG_N(\cE(a)))}
      \\
\displaystyle
      =\sup_{\set{\bsy\in U}{\sigma_r^s(\bsy)\in C_r^s(\cX)}} 
          \norm[\cY]{\cG(\sigma_r^s(\bsy))-\cD(\tilde \bG_N(S(\cE(\sigma_r^s(\bsy))))} 
        \nonumber\\
      \le\sup_{\bsy\in U}\norm[\cY]{\cG(\sigma_r^s(\bsy))-\cD(\tilde \bG_N(\bsy))} 
      = O(N^{-\min\{s-1,t\}+\delta}).
  \end{array}
$$
    Since $S$ is an infinite linear diagonal transformation, $\hat\bG_N$
    is a network of the same size as $\tilde\bG_N$ and thus of size
    $O(N\log(N)^5)$ by Theorem \ref{thm:tg}
    (cp.~Rmk.~\ref{rmk:padding} and \eqref{eq:NN}).

    Setting $M=M(N):=N\log(N)^5$ we obtain a network of size
    $O(M)$ that achieves error
    $O(N^{-\min\{s-1,t\}+\delta})=O(M^{-\min\{s-1,t\}+2\delta})$.
    Since $\delta>0$ is arbitrary here, we obtain \eqref{eq:mainlinf}. 
    The calculation for \eqref{eq:mainl2} is similar.

\subsection{Proof of Theorem \ref{thm:main_riesz} in the general case}
The argument is similar as in Sec.~\ref{sec:gen1}. Let $S$ be as in
  \eqref{eq:scaling}.  Since $\bsPsi_\cX$ is assumed to be a Riesz
  basis, by Rmk.~\ref{rmk:riesz} it holds (cp.~\eqref{eq:sigma} and
  \eqref{eq:scaling})
  \begin{equation*}
    C_r^s(\cX)=\set{\sigma_r^s(\bsy)}{\bsy\in U}\qquad
    \text{and}\qquad
    \cE(\sigma_r^s(\bsy))=S^{-1}(\bsy)\quad \forall\bsy\in U.
  \end{equation*}

  With $\tilde\bG_N$ as in Thm.~\ref{thm:tg} and $\hat\bG_N:=\tilde\bG_N\circ S$
  we find with \eqref{eq:l2err}
    \begin{align*}
      \norm[L^2(C_r^s(\cX),(\sigma_r^s)_\sharp\mu)]{\cG(a)-\cD(\hat \bG_N(\cE(a)))}
      &=
        \norm[L^2(U,\mu)]{\cG(\sigma_r^s(\bsy))-\cD(\hat \bG_N(\cE(\sigma_r^s(\bsy))))}\nonumber\\
      &=\norm[L^2(U,\mu)]{\cG(\sigma_r^s(\bsy))-\cD(\tilde \bG_N(\bsy))}\nonumber\\
      &=O(N^{-\min\{s-\frac12,t\}+\delta}).
    \end{align*}
  
    The size on the bound of $\hat\bG_N$ follows by the same argument as in Sec.~\ref{sec:gen1}.
  \section{Sparse-grid interpolation}\label{sec:int}
  In this section we discuss operator approximation using sparse-grid
  interpolation instead of NNs. In contrast to neural network
  approximation, the construction of surrogate operators via
  sparse-grid gpc interpolation is, in the current setting, an
  entirely deterministic algorithm of essentially linear complexity,
  which in particular does \emph{not} rely on solving a (nonconvex)
  optimization problem. Further, for the case of uniform
  approximation we will prove the same convergence rate as in Theorem
  \ref{thm:main}.  Thus, from a theoretical viewpoint, sparse-grid
  interpolation has significant advantages over NN training in the
  construction of surrogate operators.
  
  To recall the construction of the Smolyak (sparse-grid) interpolant (e.g. \cite{CCS13_783})
  fix a sequence of distinct points
  $(\chi_j)_{j\in\N_0}\subseteq [-1,1]$. For a multiindex $\bsnu\in\cF$
  and a function $u:U\to\R$ we define for $\bsy=(y_j)_{j\in\N}\in U$
  \begin{equation}\label{eq:Inu}
    (I_\bsnu u)(\bsy) = \sum_{\set{\bsmu\in\cF}{\bsmu\le\bsnu}}u((\chi_{\mu_j})_{j\in\N})
    \prod_{j\in\N}\prod_{\substack{i=0\\ i\neq\mu_j}}^{\nu_j}\frac{y_j-\chi_i}{\chi_{\mu_j}-\chi_i},
  \end{equation}
  where $\bsnu\le\bsmu$ is understood as $\mu_j\le\nu_j$ for all $j\in\N$. 
  The sum in \eqref{eq:Inu} is over
  $\prod_{j\in\N}(1+\nu_j)$ indices, 
  which is finite since $\bsnu\in\cF$.  
  We emphasize that $I_\bsnu$ maps from
  $C^0(U)$ to ${\rm span}\set{\bsy^\bsmu}{\bsmu\le\bsnu}$.  
  Throughout
  we assume that the $(\chi_j)_{j\in\N}$ are such that the Lebesgue
  constant $L((\chi_j)_{j=0}^n)$ of $(\chi_j)_{j=0}^n$ enjoys the
  property
  \begin{equation}\label{eq:lebesgue}
    L((\chi_j)_{j=0}^n)\le (1+n)^\tau\qquad\forall n\in\N_0
  \end{equation}
  for some fixed $\tau>0$.  
  One popular example for such a sequence
  are the so-called Leja points, see \cite{CH13} and the references there.

  For a finite downward closed set $\Lambda\subseteq\cF$
  denote 
  \begin{equation*}
    \bbP_\Lambda:={\rm span}\set{\bsy^\bsnu}{\bsnu\in\Lambda}.
  \end{equation*}
  The Smolyak interpolant is the map
  $I_\Lambda:C^0(U)\to\bbP_{\Lambda}$ defined via
  \begin{equation}\label{eq:ILambda}
    I_\Lambda := \sum_{\bsnu\in\Lambda}\varsigma_{\Lambda,\bsnu} I_\bsnu,\qquad
    \varsigma_{\Lambda,\bsnu}:=\sum_{\set{\bse\in\{0,1\}^\N}{\bsnu+\bse\in\Lambda}}(-1)^{|\bse|}.
  \end{equation}
  \begin{remark}
    It can be checked that the number of function evaluations of $u$
    required to compute $I_\Lambda u$ equals $|\Lambda|$.
  \end{remark}

  The Smolyak interpolant has the following well-known
  properties, see for example \cite[Lemma 1.3.3]{JZDiss}, \cite{CCS13_783}. 
  \begin{lemma}\label{lemma:prop}
    Let $\Lambda$ be finite and downward closed. 
    Then with $\tau$ as in \eqref{eq:lebesgue} and 
    $\omega_\bsnu:=\prod_{j\in\N}(1+2\nu_j)$
    \begin{enumerate}
      \item\label{item:int} $I_\Lambda:C^0(U)\to\bbP_\Lambda$ and $I_{\Lambda}p=p$ for
        all $p\in\bbP_\Lambda$,
      \item\label{item:lebesgue}
        $\norm[L^\infty(U)]{I_{\Lambda}L_\bsnu}\le
        \omega_\bsnu^{3/2+\tau}$ for all $\bsnu\in\cF$.
      \end{enumerate}
  \end{lemma}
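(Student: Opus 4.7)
The plan is to work with the standard hierarchical (or "detail") reformulation of the Smolyak operator, from which both assertions follow directly. For each univariate index $n\in\N_0$ set $\Delta_n := I_n - I_{n-1}$ with the convention $I_{-1}=0$, and for a multiindex $\bsmu\in\cF$ set $\Delta_\bsmu := \bigotimes_{j\in\N}\Delta_{\mu_j}$ (only finitely many factors differ from the identity-on-constants, since $\bsmu\in\cF$). The first step is to verify the classical identity
\begin{equation*}
I_\Lambda \;=\; \sum_{\bsnu\in\Lambda}\varsigma_{\Lambda,\bsnu}I_\bsnu \;=\; \sum_{\bsmu\in\Lambda}\Delta_\bsmu
\qquad\text{for downward closed }\Lambda,
\end{equation*}
which is a consequence of rearranging the signed sum defining $\varsigma_{\Lambda,\bsnu}$ using that $\Lambda$ is downward closed (each product $\bsmu+\bse\in\Lambda$ contributes in a telescoping manner). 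I would do this by induction on $|\Lambda|$, adding one maximal element of $\Lambda$ at a time.

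For part~\ref{item:int}, the inclusion $I_\Lambda(C^0(U))\subseteq \bbP_\Lambda$ is then immediate from the hierarchical form: $\Delta_\bsmu$ maps $C^0(U)$ into $\bbP_\bsmu\subseteq \bbP_\Lambda$ whenever $\bsmu\in\Lambda$. Polynomial reproduction $I_\Lambda p = p$ for $p\in\bbP_\Lambda$ is a short telescoping argument: it suffices by linearity to check $I_\Lambda \bsy^\bsnu = \bsy^\bsnu$ for $\bsnu\in\Lambda$, which follows from $\Delta_{\mu_j}y_j^{\nu_j}=0$ whenever $\mu_j>\nu_j$ and $\sum_{\mu_j=0}^{\nu_j}\Delta_{\mu_j}y_j^{\nu_j}=I_{\nu_j}y_j^{\nu_j}=y_j^{\nu_j}$ after summing over all $\bsmu\le\bsnu$ (which all belong to $\Lambda$ by downward closure).

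For part~\ref{item:lebesgue}, the key observation is that $\Delta_{\mu_j}L_{\nu_j}\equiv 0$ whenever $\mu_j>\nu_j$, since then both $I_{\mu_j}$ and $I_{\mu_j-1}$ are exact on $L_{\nu_j}\in\bbP_{\mu_j-1}$. Consequently
\begin{equation*}
I_\Lambda L_\bsnu \;=\; \sum_{\bsmu\in\Lambda,\,\bsmu\le\bsnu}\Delta_\bsmu L_\bsnu,
\end{equation*}
a sum containing at most $\prod_{j}(1+\nu_j)\le\omega_\bsnu$ nonzero terms. By the tensor-product structure and the bounds $\|I_n\|_{L^\infty([-1,1])\to L^\infty([-1,1])}\le(1+n)^\tau$ from \eqref{eq:lebesgue} combined with \eqref{eq:Lnubound}, each factor satisfies
\begin{equation*}
\|\Delta_{\mu_j}L_{\nu_j}\|_{L^\infty([-1,1])}
\;\lesssim\; (1+\mu_j)^\tau(1+2\nu_j)^{1/2}
\;\le\;(1+2\nu_j)^{1/2+\tau}
\end{equation*}
(using $\mu_j\le\nu_j$), while the factors with $j\notin\supp\bsnu$ contribute $1$. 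Taking the product over $j\in\supp\bsnu$ and then summing over $\bsmu\le\bsnu$ yields a bound of the form $\omega_\bsnu^{3/2+\tau}$, which is exactly the claim of part~\ref{item:lebesgue}.

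The only subtle point I anticipate is organizing the constants cleanly: the naive estimate produces a factor $2^{|\supp\bsnu|}$ coming from writing $\Delta_{\mu_j}=I_{\mu_j}-I_{\mu_j-1}$ and bounding via the triangle inequality. This harmless multiplicative overhead is absorbed into $\omega_\bsnu$ using that every $j\in\supp\bsnu$ contributes a factor $1+2\nu_j\ge 3$; alternatively one can sharpen each univariate bound using that $\Delta_{\mu_j}L_{\nu_j}$ is the rank-one correction $L_{\nu_j}-I_{\mu_j-1}L_{\nu_j}$ when $\mu_j=\nu_j$ and controlled directly by $L((\chi_i)_{i=0}^{\mu_j})$ otherwise. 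This is the main bookkeeping step; the rest is a direct application of the hierarchical identity established at the outset.
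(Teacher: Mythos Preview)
Your approach via the hierarchical decomposition $I_\Lambda=\sum_{\bsmu\in\Lambda}\Delta_\bsmu$ is exactly the standard argument; the paper does not give its own proof but simply cites \cite[Lemma~1.3.3]{JZDiss} and \cite{CCS13_783}, where precisely this telescoping argument appears. Part~\ref{item:int} is handled correctly.

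For part~\ref{item:lebesgue} the strategy is right, but your first bookkeeping route (bound each $\|\Delta_\bsmu L_\bsnu\|_\infty$ and then count $\prod_j(1+\nu_j)$ terms) does not yield the stated inequality without an extraneous constant: the factor $2^{|\supp\bsnu|}$ is \emph{not} absorbed by $\omega_\bsnu$ in the way you suggest, since $2(1+\nu_j)>1+2\nu_j$. The clean way---which you mention only as an alternative---is to sum univariately first:
\[
\sum_{\mu_j=0}^{\nu_j}\|\Delta_{\mu_j}\|_{L^\infty\to L^\infty}
\le 1+\sum_{\mu_j=1}^{\nu_j}\big((1+\mu_j)^\tau+\mu_j^\tau\big)
=2\sum_{k=1}^{\nu_j}k^\tau+(\nu_j+1)^\tau
\le (1+2\nu_j)^{1+\tau},
\]
the last step by a short induction on $\nu_j$ (equality at $\tau=0$, and the increment $(n+1)^\tau+(n+2)^\tau\le (2n+3)^{1+\tau}-(2n+1)^{1+\tau}$ follows from the mean value theorem). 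Multiplying by $\|L_{\nu_j}\|_\infty\le(1+2\nu_j)^{1/2}$ and taking the product over $j$ gives exactly $\omega_\bsnu^{3/2+\tau}$. So commit to the univariate summation rather than the $2^{|\supp\bsnu|}$-absorption.
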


  The following theorem shows the same convergence rate as established
  in Theorem \ref{thm:tg} for NNs, for Smolyak interpolation
  $p_N$ of the components of the parametric map $\bG$:
\begin{theorem}\label{thm:int}
  Let Assumption \ref{ass:1} be satisfied for some $s>1$ and $t>0$,
  and let the interpolation points $(\chi_j)_{j\in\N_0}$ be such that
  \eqref{eq:lebesgue} holds. 
  Fix $\delta>0$ (arbitrarily small).

  Then, there exists a constant $C>0$ (depending on $\delta$, $\tau$,
  $s$, $t$, $r$) such that, for every $N\in\N$, there exist downward
  closed index sets $(\Lambda_{N,j})_{j\le N}$ such that
  $\sum_{j=1}^N|\Lambda_{N,j}|\le N$ and with the interpolated
  coefficients
  ${p_N} (\bsy) =
  (I_{\Lambda_{N,j}}\dup{u(\bsy)}{\tilde{\eta}_j})_{j\le N}$ holds
  \begin{equation}\label{eq:l2errint}
    \sup_{\bsy\in U}\normlr[\cY]{\cG(\sigma_r^s(\bsy))-\cD(p_N(\bsy))}
       \le C N^{-\min\{s-1,t\}+\delta}.
  \end{equation}
\end{theorem}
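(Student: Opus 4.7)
The plan is to mirror the NN proof of Theorem~\ref{thm:tg}, replacing the ReLU emulation of Legendre polynomials by the exact polynomial reproduction property of the Smolyak operator $I_{\Lambda_{N,j}}$ on its index set (Lemma~\ref{lemma:prop}, item \ref{item:int}) combined with the Lebesgue-type bound $\|I_\Lambda L_\bsnu\|_{L^\infty(U)}\le\omega_\bsnu^{3/2+\tau}$ on its complement (Lemma~\ref{lemma:prop}, item \ref{item:lebesgue}). Here $\tau$ is the Lebesgue exponent of \eqref{eq:lebesgue}. The resulting factor $\omega_\bsnu^{3/2+\tau}$, in place of the uniform emulation error $\gamma$ and the bound $\|L_\bsnu\|_{L^\infty}\le\omega_\bsnu^{1/2}$ in the NN proof, is absorbed by invoking Theorem~\ref{thm:bpe} with its weight exponent equal to $3/2+\tau$ rather than $1/2$.

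To realize this, I apply Theorem~\ref{thm:bpe} with weight exponent $3/2+\tau$ and some $p\in(1/s,1]$ with $1/p\ge s-\delta/2$, obtaining $(a_\bsnu)_{\bsnu\in\cF}\in\ell^p(\cF)$ and an enumeration $(\bsnu_i)_{i\in\N}$ of $\cF$ with $a_{\bsnu_i}\lesssim i^{-s+\delta/2}$ monotonically decreasing and each initial segment $\{\bsnu_1,\dots,\bsnu_n\}$ downward closed. Writing $c_{\bsnu,j}:=\int_U L_\bsnu(\bsy)\dup{u(\bsy)}{\tilde\eta_j}\dd\mu(\bsy)$, the computation in Proposition~\ref{prop:weightsum} yields, for any $t'\in(t-\delta/2,t)$, the bound $\omega_{\bsnu_i}^{3/2+\tau}(\sum_j w_j^{-2t'}c_{\bsnu_i,j}^2)^{1/2}\lesssim a_{\bsnu_i}$. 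Applying Lemma~\ref{lemma:ml}(i) with $c_{i,j}:=\omega_{\bsnu_i}^{3/2+\tau}|c_{\bsnu_i,j}|$, $a_i:=a_{\bsnu_i}$, $d_j:=w_j^{t'}\lesssim j^{-t+\delta/2}$, $\alpha:=s-\delta/2$, $\beta:=t-\delta/2$ then produces a monotonically decreasing $(m_i)\subset\N_0$ with $\sum_i m_i\le N$ and
\[
\sum_i \omega_{\bsnu_i}^{3/2+\tau}\Big(\sum_{j>m_i}c_{\bsnu_i,j}^2\Big)^{1/2}\lesssim N^{-\min\{s-1,t\}+\delta}.
\]
I set $\Lambda_{N,j}:=\{\bsnu_i:m_i\ge j\}$ for $j\le N$; by monotonicity of $(m_i)$ this is an initial segment of the enumeration, hence downward closed, and $\sum_{j\le N}|\Lambda_{N,j}|=\sum_i m_i\le N$.

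For the error estimate, for $\bsy\in U$ I decompose
\[
\cG(\sigma_r^s(\bsy))-\cD(p_N(\bsy))=\sum_{j>N}\dup{u(\bsy)}{\tilde\eta_j}\eta_j+\sum_{j\le N}d_j(\bsy)\eta_j,\quad d_j(\bsy):=\dup{u(\bsy)}{\tilde\eta_j}-I_{\Lambda_{N,j}}\dup{u(\bsy)}{\tilde\eta_j}.
\]
Lemma~\ref{lemma:Gt} and Assumption~\ref{ass:1} give $\norm[\cY^{t'}]{u(\bsy)}\le M$, so $\dup{u(\bsy)}{\tilde\eta_j}^2\le M^2 w_j^{2t'}$ for $j>N$; the frame synthesis bound then controls the truncation term in $\cY$ by $\Lambda_\cY M w_N^{t'}\lesssim N^{-t+\delta}$. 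For the interpolation term, Lemma~\ref{lemma:prop} (together with \eqref{eq:Lnubound}) gives $\|L_\bsnu-I_{\Lambda_{N,j}}L_\bsnu\|_{L^\infty(U)}\le 2\omega_\bsnu^{3/2+\tau}$ for $\bsnu\notin\Lambda_{N,j}$ and vanishing error for $\bsnu\in\Lambda_{N,j}$, so uniformly in $\bsy$, $|d_j(\bsy)|\le 2\sum_{i:\,m_i<j}\omega_{\bsnu_i}^{3/2+\tau}|c_{\bsnu_i,j}|$. The frame bound and Minkowski's inequality (exchanging the $i$ and $j$ summations) then give $\|\sum_{j\le N}d_j\eta_j\|_\cY\le 2\Lambda_\cY\sum_i \omega_{\bsnu_i}^{3/2+\tau}(\sum_{j>m_i}c_{\bsnu_i,j}^2)^{1/2}\lesssim N^{-\min\{s-1,t\}+\delta}$.

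The main obstacle is the double-indexed bookkeeping: the NN argument naturally groups contributions by the Legendre index $i$, whereas the sparse-grid construction is indexed by the output coordinate $j$ (each component has its own $\Lambda_{N,j}$). The Minkowski exchange together with the choice $\Lambda_{N,j}=\{\bsnu_i:m_i\ge j\}$ couples the two indices symmetrically through the identity $\sum_i m_i=\sum_j\#\{i:m_i\ge j\}$, and this is precisely what makes the sparse-grid analysis deliver the same rate as the NN one.
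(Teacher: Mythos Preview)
Your proof is correct and follows essentially the same route as the paper: invoke Theorem~\ref{thm:bpe} with weight exponent $3/2+\tau$, feed the resulting coefficient bounds into Lemma~\ref{lemma:ml}(i) to produce the sequence $(m_i)$, define $\Lambda_{N,j}=\{\bsnu_i:m_i\ge j\}$, and exploit Lemma~\ref{lemma:prop} to bound $L_\bsnu-I_{\Lambda_{N,j}}L_\bsnu$ by $2\omega_\bsnu^{3/2+\tau}$ outside $\Lambda_{N,j}$. The only organizational difference is that you split off the tail $j>N$ and bound it separately via $\|u(\bsy)\|_{\cY^{t'}}\le M$ and monotonicity of $(w_j)$, whereas the paper treats all $j$ at once (implicitly $\Lambda_{N,j}=\emptyset$ for $j>N$, so those terms are absorbed into the same sum $\sum_i\omega_{\bsnu_i}^{3/2+\tau}(\sum_{j>m_i}c_{\bsnu_i,j}^2)^{1/2}$); both are valid and yield the same rate.
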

\begin{remark}
  The convergence rate is in terms of
  $N\ge \sum_{j=1}^N|\Lambda_{N,j}|$, which is an upper bound of the
  number of required evaluations of $\dup{u(\bsy)}{\tilde{\eta}_j}$ for all
  $j\in\N$ (here $u$ is as in \eqref{eq:u}).
\end{remark}

{
Before giving the proof of the theorem, we first show a variant of
Lemma \ref{lemma:ml} required for the proof.

\begin{lemma}\label{lemma:ml2}
  Let $\alpha > 1$, $\beta>0$ and assume given three
  sequences $(a_i)_{i\in\N}$, $(d_j)_{j\in\N}$,
  $(e_k)_{k\in\N} \subset (0,\infty)^\N$
  with $\sum_{i\in\N}a_ie_i<\infty$
  and $d_j\lesssim j^{-\beta}$, $e_k^{-1}\lesssim k^{1-\alpha}$, 
  for all $i$, $k\in\N$.
  Assume additionally
  that $(d_j)_{j\in\N}$, $(e_k)_{k\in\N}$ are monotonically decreasing.
  Suppose that
  there exists $C<\infty$ such that the sequence
  $(c_{i,j})_{j,i\in\N}$ satisfies 
  \begin{equation*}
\forall i\in \N:\quad \sum_{j\in\N}c_{i,j}^2d_j^{-2}\le C^2 a_i^2.
\end{equation*}

Fix $\delta>0$. Then for every $N\in\N$ exists $(m_i)_{i\in\N}\subseteq\N_0$
    monotonically decreasing such that $\sum_{i\in\N}m_i\le N$ and
  \begin{equation*}
    \sum_{i\in\N}\left(\sum_{j> m_i}c_{i,j}^2\right)^{1/2}\lesssim N^{-\min\{\alpha-1,\beta\}+\delta},
  \end{equation*}
\end{lemma}
\begin{proof}
  Fix $n\in\N$. Set
  $m_i:=\lceil(\frac{n}{i})^{(\alpha-1)/\beta}\rceil$ for $i\le n$ and
  $m_i:=0$ otherwise. For $i\le n$, since $(d_j)_{j\in\N}$ was assumed
  monotonically decreasing,
  \begin{equation*}
    \left(\sum_{j>m_i}c_{i,j}^2\right)^{1/2}
    =
    \left(\sum_{j>m_i}c_{i,j}^2d_{j}^2 d_j^{-2}\right)^{1/2}
    \le C d_{m_i} a_i
    \lesssim
    m_i^{-\beta} a_i%
  \end{equation*}
  and for $i>n$ we have
  $(\sum_{j>m_i}c_{i,j}^2)^{1/2}=(\sum_{j\ge 1}c_{i,j}^2)^{1/2}\lesssim a_i$.
  Note that for $n\in\N$ due to the monotonicity of $(e_k)_{k\in\N}$
  and because $\sum_{i\in\N}a_ie_i<\infty$
  \begin{equation*}
    \sum_{i>n}a_i = \sum_{i>n}a_ie_i e_i^{-1}\lesssim e_n^{-1} \lesssim n^{1-\alpha}.
  \end{equation*}
  Similarly,
  \begin{equation*}
    \sum_{i\le n} m_i^{-\beta}a_i = \sum_{i\le n} m_i^{-\beta}a_ie_ie_i^{-1}
    \lesssim \max_{1\le i\le n}m_i^{-\beta}e_i^{-1}\lesssim
    \max_{1\le i\le n}\Big(\frac{n}{i}\Big)^{1-\alpha}i^{1-\alpha}=n^{1-\alpha}.
  \end{equation*}
  Thus
  \begin{align*}
    \sum_{i\in\N}\left(\sum_{j> m_i}c_{i,j}^2\right)^{1/2}
    \lesssim \sum_{i\le n} m_i^{-\beta}a_i+\sum_{i>n}a_i
    \lesssim n^{-\alpha+1}.
  \end{align*}
In addition,
  \begin{equation*}
    \sum_{j\in\N}m_j\lesssim n+n^{\frac{\alpha-1}{\beta}}\int_1^{n+1} x^{-\frac{\alpha-1}{\beta}}\dd x
    \lesssim
    n+n^{\frac{\alpha-1}{\beta}}
    \begin{cases}
      1&\text{if }\frac{\alpha-1}{\beta}>1\\
      \log(n) &\text{if }\frac{\alpha-1}{\beta}=1\\
      n^{1-\frac{\alpha-1}{\beta}}&\text{if }\frac{\alpha-1}{\beta}<1
    \end{cases}
    \lesssim
    \begin{cases}
      n^{\frac{\alpha-1}{\beta}}&\text{if }\frac{\alpha-1}{\beta}>1\\
      n\log(n) &\text{if }\frac{\alpha-1}{\beta}=1\\
      n&\text{if }\frac{\alpha-1}{\beta}<1.
    \end{cases}    
  \end{equation*}
  With $M:=\sum_{j\in\N}m_j$ we get
  \begin{equation*}
    \sum_{i\in\N}\left(\sum_{j> m_i}c_{i,j}^2\right)^{1/2}\lesssim
    \begin{cases}
      M^{-\beta+\delta}&\text{if }\alpha-1\ge \beta\\
      M^{-\alpha+1+\delta}&\text{if }\alpha-1<\beta.
    \end{cases}        
  \end{equation*}
  Choosing $n(N)$ appropriately we can guarantee $M(n)\sim N$.
\end{proof}
}

\begin{proof}[Proof of Theorem \ref{thm:int}]
  { Fix $p\in (\frac{1}{s},1]$ such that $\frac{1}{p}\ge
    s-\delta/2$.  Consider Theorem \ref{thm:bpe} (with
    ``$\tau$'' being
    $\frac{3}{2}+\tau$ for the value in \eqref{eq:lebesgue}), and let
    $(a_\bsnu)_{\bsnu\in\cF}\in\ell^p(\cF)$ and
    $(e_\bsnu)_{\bsnu\in\cF}$ be as in this theorem.  Moreover, let
    $(\bsnu_i)_{i\in\N}$ be an arbitrary enumeration such that
    $(e_{\bsnu_i}^{-1})_{i\in\N}$ is monotonically decreasing.  According
    to Theorem \ref{thm:bpe} \ref{item:cprop} it holds in particular
    \begin{subequations}\label{eq:lemmaassint}    
    \begin{equation}
      \sum_{i\in\N}a_{\bsnu_i}e_{\bsnu_i}<\infty
    \end{equation}
    and $(e_\bsnu^{-1})_{\bsnu\in\cF}\in\ell^{p/(1-p)}(\cF)$.
    Due to
    $i e_{\bsnu_i}^{-p/(1-p)}\le \sum_{j\in\N}e_{\bsnu_j}^{-p/(1-p)}<\infty$ this
    implies
    \begin{equation}
      e_{\bsnu_i}^{-1}\lesssim i^{-\frac{1-p}{p}} \le i^{1-s+\frac{\delta}{2}}.
    \end{equation}

    Next fix $t'\in [0,t)$ such that $t'>t-\delta/2$. By
    Proposition \ref{prop:weightsum}
    with $\omega_{\bsnu}:=\prod_{j\in\supp\bsnu}(1+2\nu_j)\ge 1$
    we have for every $i\in\N$
    \begin{equation}
      \omega_{\bsnu_i}^{3/2+\tau} \left(\sum_{j\in\N}w_j^{-2t'} c_{\bsnu_i,j}^2\right)^{1/2}\lesssim a_{\bsnu_i}
      \lesssim i^{-s+\frac{\delta}{2}}.
    \end{equation}
    In addition, due to
    $(w_j^{t'})_{j\in\N}\in\ell^{1/(t-\delta/2)}(\N)$, by the same
    argument as above (using that $(w_j)_{j\in\N}$ was assumed
    monotonically decreasing) it holds
    \begin{equation}
      w_j^{t'}\lesssim j^{-t+\frac{\delta}{2}}.
    \end{equation}
    \end{subequations}
  }
    
    Due to \eqref{eq:lemmaassint} with $\alpha:=s-\delta/2$ and
    $\beta:=t-\delta/2$, by Lemma \ref{lemma:ml2} we can find
    $(m_i)_{i\in\N}\subset\N_0$ such that $\sum_{i\in\N}m_i\le N$ and
    \begin{equation}\label{eq:intrate}
      \sum_{i\in\N}\omega_{\bsnu_i}^{3/2+\tau}\left(\sum_{j> m_i}c_{\bsnu_i,j}^2\right)^{1/2} \le N^{-\min\{s-1,t\}+\delta}.
    \end{equation}
    Now define for $j\le N$
    \begin{equation}\label{eq:LambdaNj}
      \Lambda_{N,j}:=\set{\bsnu_i}{m_i\ge j}=\set{\bsnu_i}{i\le \max\set{r}{m_r\ge j}},
    \end{equation}
    where the equality follows by the fact that
    $(m_i)_{i\in\N}$ is monotonically decreasing
    according to Lemma \ref{lemma:ml}.
    Thus each $\Lambda_{N,j}$ is downward closed by Theorem \ref{thm:bpe}. 
    In addition,
    \begin{equation*}
      \sum_{j\in\N}|\Lambda_{N,j}|=\sum_{j\in\N}\sum_{\set{i}{m_i\ge j}}1
      =\sum_{i\in\N}\sum_{\set{j}{j\le m_i}}1=\sum_{i\in\N}m_i \le N.
    \end{equation*}

    With \eqref{eq:expansion} it holds
    \begin{equation*}
      I_{\Lambda_{N,j}}\dup{u}{\eta_j}
      =I_{\Lambda_{N,j}}\sum_{\bsnu\in\cF}c_{\bsnu,j}L_\bsnu
      =\sum_{\bsnu\in\Lambda_{N,j}}c_{\bsnu,j}L_\bsnu
      +\sum_{\bsnu\in\cF\backslash\Lambda_{N,j}}c_{\bsnu,j}I_{\Lambda_{N,j}}L_\bsnu,
    \end{equation*}
    where we used that $I_{\Lambda_{N,j}}L_{\bsnu}=L_\bsnu$ for all
    $\bsnu\in\Lambda_{N,j}$ by Lemma \ref{lemma:prop} \ref{item:int}.
    
    Thus for all $\bsy\in U$ with
    $p_N(\bsy)=(I_{\Lambda_{N,j}}\dup{u}{\eta_j})_{j=1}^N$
    \begin{align*}
      & \norm[\cY]{\cG(\sigma_r^s(\bsy))-\cD(p_N(\bsy))}
\\
      &=\normlr[\cY]{\sum_{j\in\N}\sum_{\bsnu\in\cF}c_{\bsnu,j}L_{\bsnu}(\bsy)\eta_j
        - \sum_{j\in\N}\sum_{\bsnu\in\Lambda_{N,j}}c_{\bsnu,j}L_{\bsnu}(\bsy)\eta_j
        - \sum_{j\in\N}\sum_{\bsnu\in\cF\backslash\Lambda_{N,j}}c_{\bsnu,j}I_{\Lambda_{N,j}}L_{\bsnu}(\bsy)\eta_j}\nonumber\\
      &\le
        \normlr[\cY]{\sum_{j\in\N}\sum_{\set{\bsnu_i}{m_i<j}}
        c_{\bsnu_i,j}L_{\bsnu_i}(\bsy)\eta_j}
        +\normlr[\cY]{\sum_{j\in\N}\sum_{\set{\bsnu_i}{m_i<j}}
        c_{\bsnu_i,j}(L_{\bsnu_i}(\bsy) - I_{\Lambda_{N,j}} L_{\bsnu_i}(\bsy))\eta_j }\nonumber\\
      &=
        \normlr[\cY]{\sum_{i\in\N}\sum_{j>m_i}
        c_{\bsnu,j}L_{\bsnu}(\bsy)\eta_j}
        +\normlr[\cY]{\sum_{i\in\N}\sum_{j>m_i}
        c_{\bsnu_i,j}(L_{\bsnu_i}(\bsy) - I_{\Lambda_{N,j}} L_{\bsnu_i}(\bsy))\eta_j }\nonumber\\
      &\le \Lambda_\cY
        \sum_{i\in\N} 
          \left(\sum_{j>m_i}c_{\bsnu_i,j}^2\norm[L^\infty(U)]{L_{\bsnu_i}}^2\right)^{1/2}
        + \left(\sum_{j>m_i}c_{\bsnu_i,j}^2(\norm[L^\infty(U)]{L_{\bsnu_i}}+\norm[L^\infty(U)]{I_{\Lambda_{N,j}}L_{\bsnu_i}})^2\right)^{1/2},
    \end{align*}
    where $\Lambda_\cY$ denotes again the upper frame constant in
      \eqref{eq:FrBd} for the frame $\bsPsi_\cY=(\eta_j)_{j\in\N}$,
      cp.~Rmk.~\ref{rmk:FrBd}. 
    By Lemma \ref{lemma:prop} \ref{item:lebesgue} and \eqref{eq:Lnubound} we have
    \begin{equation*}
      \norm[L^\infty(U)]{L_{\bsnu_i}}+\norm[L^\infty(U)]{I_{\Lambda_{N,j}}L_{\bsnu_i}}\le 2\omega_{\bsnu}^{3/2+\tau}.
    \end{equation*}
    Thus, using \eqref{eq:intrate} we find
    \begin{equation*}
      \sup_{\bsy\in U}\norm[\cY]{\cG(\sigma_r^s(\bsy))-\cD(p_N(\bsy))}
      \le 3 \Lambda_\cY N^{-\min\{s-1,t\}+\delta}
    \end{equation*}
    which concludes the proof.
  \end{proof}

  Theorem \ref{thm:main2} is now a direct consequence of Theorem
  \ref{thm:int}; specifically the statement of Theorem \ref{thm:main2}
  follows after introducing a scaling to map
  $\times_{j\in\N}[-rw_j^s,rw_j^s]\to U$ as in the proof of Theorem
  \ref{thm:main}.

  {
    \section{Implementation of spectral operator surrogates}
    As mentioned above, sparse-grid interpolation is deterministic,
    and the proof of Theorem \ref{thm:int} is constructive. Thus the
    polynomial surrogate in Theorem \ref{thm:int} can be explicitly
    computed. For the convenience of the reader we summarize the
    procedure, and sketch out the algorithm in the following. In
    particular, this provides a ``training algorithm'' for the
    determination of surrogates. Its practicality and performance
    remains to be investigated, which we leave for future work.

    The %
    proof of Theorem \ref{thm:int}
    proceeds in three steps: For fixed $N\in\N$
    \begin{enumerate}
    \item\label{item:LambdaNj} Determine the sets $\Lambda_{N,j}\subseteq\cF$ for $j=1,\dots,N$.
    \item For $\bsy\in [-1,1]^\N$ let
      \begin{equation}
        f_j(\bsy):=\duplr{\cG\Big(\sum_{j\in\N}y_j\psi_j\Big)}{\tilde\eta_j}
      \end{equation}
      and compute the interpolants
      \begin{equation}\label{eq:pNj}
        p_{N,j}(\bsy):=I_{\Lambda_{N,j}}[f_j](\bsy)
      \end{equation}
      using \eqref{eq:Inu} and \eqref{eq:ILambda} for all $j=1,\dots,N$.
    \item Compute the operator surrogate
      \begin{equation*}
        \cD\circ p_N\circ\cE(a) = \sum_{j=1}^N \eta_j p_{N,j}(\cE(a)).
      \end{equation*}
    \end{enumerate}
    While the interpolation polynomials $p_{N,j}$ in \eqref{eq:pNj}
    formally are functions of $\bsy\in [-1,1]^\N$, due to each
    multiindex $\bsnu\in\Lambda_{N,j}$ having finite support, in
    practice these are polynomials depending only on the finitely many
    variables
    $\set{y_j}{\exists\bsnu\in\Lambda_{N,j}~\text{s.t.}~\nu_j\neq 0}$.
    Hence they are computable.

    The critical step is the determination of the index sets
    $\Lambda_{N,j}$ in \ref{item:LambdaNj}. These sets are constructed
    in the proofs of Lemma \ref{lemma:ml2} and Theorem
    \ref{thm:int}. Simplifying a bit by ignoring logarithmic terms and
    the (positive but arbitrarily small) constant $\delta>0$, the
    procedure reads: For fixed $N\in\N$
    \begin{enumerate}
    \item Set $m_{N,j}:=(n(N)/j)^{(s-1)/t}$, $j=1,\dots,N$, where
      \begin{equation*}
        n(N):= \begin{cases}
          N^{\frac{t}{s-1}} &\text{if }s-1>t\\
          N &\text{otherwise.}
          \end{cases}
        \end{equation*}
      \item With $T>1$, $t>0$ as in Theorem \ref{thm:bpe}, define
        \begin{equation}\label{eq:Tt}
          e_\bsnu:=\prod_{j\in\supp\bsnu}\rho_j^{\nu_i}\qquad\text{where}\qquad
          \rho_j:=\max\{T,tw_j^{s(1-p)}\}.
        \end{equation}
        where an empty product equals $1$.
      \item\label{item:enumeration} Determine an enumeration $(\bsnu_i)_{i\in\N}$ of $\cF$
        such that $(e_{\bsnu_i}^{-1})_{i\in\N}$ is monotonically
        decreasing.
      \item For $j=1,\dots,N$ let (as in \eqref{eq:LambdaNj})
      \begin{equation*}
        \Lambda_{N,j} = \set{\bsnu_i}{i\le\max\set{r}{m_{N,r}\ge j}}
        \simeq \set{\bsnu_i}{i\le n(N) j^{t/(1-s)}}.
      \end{equation*}
    \end{enumerate}
 
    Again, all steps are straightforward, except for
    \ref{item:enumeration}. There exist different algorithms
    (recursive and non-recursive) capable of determining
    $\bsnu_1,\dots,\bsnu_N$ in linear complexity. We refer to
    \cite[Alg.~4.13]{MR2566594}, \cite[Alg.~2]{JZDiss} and
    \cite[Alg.~6]{2311.04172}.

    Finally, we emphasize that the constants $T>1$ and $t>0$ in
    \eqref{eq:Tt} are unknown. 
    In practice they may be set to $1$, or
    determined by trial and error. While the precise tuning of the
    constants can make a difference in the performance,
    different constructions seem to yield asymptotically similar
    results, as long as the correct behaviour with regards to the
    importance of each dimension is captured. We refer for instance to
    the numerical experiments in \cite[Sec.~5.4]{MR4113052} and \cite[Chapter 5]{JZDiss}.
  }
\section{Examples}
\label{sec:Expl}
\subsection{Diffusion equation on the torus}
\label{sec:DiffEqTorus}
Denote by $\bbT^d\simeq [0,1]^d$ the $d$-dimensional torus, $d\in\N$.
In the following all function spaces on $\bbT^d$ are assumed to be
$1$-periodic with respect to each variable.

\subsubsection{Operator $\cG$}\label{sec:torusG}
Given a nominal
coefficient $\bar{a}\in L^\infty(\bbT^d)$, a diffusion coefficient
$a\in L^\infty(\bbT^d)$, and a source $f\in H^{-1}(\bbT^d)/\R$, we
wish to find $u\in H^1(\bbT^d)$ such that
\begin{equation} \label{eq:DiffPeriodic}
 - \nabla \cdot ((\bar{a} + a )\nabla u) = f 
\text{ on } \bbT^d\qquad
\text{ and }\qquad \int_{\bbT^d}u(x)\dd x  =0
\end{equation}
in a weak sense. 
Assuming
\begin{equation}\label{eq:u_bar}
    \essinf_{x\in \bbT^d} (\bar{a}(x)+a(x)) > a_{\min},
\end{equation}
for some constant $a_{\min}>0$, it follows by the Lax-Milgram Lemma
that \eqref{eq:DiffPeriodic} has a unique solution
$u\in H^1(\bbT^d)/\R$ that we denote by $\cG(a):=u$. Thus $\cG$ is a
well-defined map from
$\set{a\in L^\infty(\bbT^d)}{\text{\eqref{eq:u_bar} holds}}\to
H^1(\bbT^d)/\R\hookrightarrow H^1(\bbT^d)$.

\subsubsection{$\cX^s$ and $\cY^t$}\label{sec:cXscYt_torus}
We first construct the usual Fourier basis on $L^2(\bbT^d)$: Set for $j\in\N$
$$ 
\xi_0 = 1,\qquad
\xi_{2j}(x)   = (2\pi)^{-1/2}\cos(2\pi j x),\qquad
\xi_{2j-1}(x) = (2\pi)^{-1/2}\sin(2\pi jx),
$$
and for $d\ge 2$ and $\bsj\in\N_0^d$
\begin{equation}\label{eq:xibsj}
\xi_\bsj(x_1,\dots,x_d):= \prod_{k=1}^d \xi_{j_k}(x_k).
\end{equation}
Then $\set{\xi_\bsj}{\bsj\in\N_0^d}$ is an ONB of
$L^2(\bbT^d)$. 
Recall that for $s\ge 0$,
holds
\begin{equation}\label{eq:HsChar}
  H^s(\bbT^d)=\setlr{u\in L^2(\bbT^d)}{\sum_{\bsj\in\N_0^d}\dup{u}{\xi_\bsj}^2\max\{1,|\bsj|\}^{2s}<\infty},
\end{equation}
where throughout we consider $H^s(\bbT^d)$
equipped with the inner product
\begin{equation}\label{eq:inpHs}
  \dup{u}{v}_{H^s}:=\sum_{\bsj\in\N_0^d}\dup{u}{\xi_\bsj}_{L^2}\dup{v}{\xi_\bsj}_{L^2}\max\{1,|\bsj|\}^{2s}.
\end{equation}

Fixing $s_0$, $t_0\ge 0$ (to be chosen later) we let
\begin{equation}\label{eq:cXscYt_torus}
  \begin{aligned}
  \cX&:=H^{s_0}(\bbT^d),\qquad\psi_\bsj:=\max\{1,|\bsj|\}^{-s_0}\xi_\bsj,\\
  \cY&:=H^{t_0}(\bbT^d),\qquad\eta_\bsj:=\max\{1,|\bsj|\}^{-t_0}\xi_\bsj,
  \end{aligned}
\end{equation}
so that $\bsPsi_\cX:=(\psi_{\bsj})_{\bsj\in\N_0^d}$,
$\bsPsi_\cY:=(\eta_{\bsj})_{\bsj\in\N_0^d}$ form ONBs of $\cX$, $\cY$
respectively. Next, introduce the weight sequence
\begin{equation*}
  w_\bsj:=\max\{1,|\bsj|\}^{-d}\qquad\bsj\in\N_0^d,
\end{equation*}
so that $(w_\bsj^{1+\eps})_{\bsj\in\N_0^d}\in\ell^1(\N_0^d)$ for any $\eps>0$ as
required in Sec.~\ref{sec:Scales}. Then, by \eqref{eq:HsChar} and the
definition of $\cX^s$ in \eqref{eq:XsYt}, it holds for $s\ge 0$
\begin{align*}
  \cX^s&=\setlr{u\in H^{s_0}(\bbT^d)}{\sum_{\bsj\in\N_0^d} \dup{u}{\psi_\bsj}_{H^{s_0}}^2w_\bsj^{-2s}<\infty}\nonumber\\
       &=\setlr{u\in L^{2}(\bbT^d)}{\sum_{\bsj\in\N_0^d} \dup{u}{\xi_\bsj}_{L^2}^2\max\{1,|\bsj|\}^{2s_0+2sd}<\infty} = H^{s_0+sd}(\bbT^d).
\end{align*}
Here we used that for $u=\sum_{\bsj\in\N_0}c_\bsj\xi_\bsj$ holds
by \eqref{eq:inpHs}
\begin{equation*}
  \dup{u}{\psi_\bsj}_{H^{s_0}}=
  \dup{u}{\xi_\bsj\max\{1,|\bsj|\}^{-s_0}}_{H^{s_0}}=
  c_\bsj\max\{1,|\bsj|\}^{s_0}=\dup{u}{\xi_\bsj}_{L^2}\max\{1,|\bsj|\}^{s_0}.
\end{equation*}
By the same argument $\cY^t=H^{t_0+td}(\bbT^d)$ for any $t\ge 0$.

\subsubsection{Coefficient-to-solution surrogate rates}
\label{sec:OpSurrRates}

We now give a convergence result for the approximation of the solution
operator $\cG$ (corresponding to the PDE \eqref{eq:DiffPeriodic}) on a
Sobolev ball. The encoder $\cE$ and decoder $\cD$ are as in
\eqref{eq:ED}, w.r.t.\ the spaces and ONBs in \eqref{eq:cXscYt_torus},
which depend on the constants $s_0$, $t_0\ge 0$ that are still at our
disposal. The parameter $s_0$ controls the regularity of the input
space and thus determines the encoder $\cE$. It will have to be chosen
suitably in order to achieve possibly fast convergence. On the other
hand, $t_0$ controls the regularity of the output space. It may be
chosen freely and determines the norm in which the error is
measured---smaller $t_0$ amounts to a weaker norm in the output space
and thus yields larger convergence rates.

\begin{proposition}\label{prop:torus}
  Assume $f\in C^\infty(\bbT^d)/\R$.  Let $\alpha>\frac{3d}{2}$,
  $r>0$, $a_{\min}>0$ and let $\delta>0$ (arbitrarily small). Suppose
  that $\bar a+a$ satisfies \eqref{eq:u_bar} for all
  $a\in B_r(H^{\alpha}(\bbT^d))$.

  Then for every $t_0\in [0,1]$ there exists a
  constant $C>0$ and for all $N\in\N$ there exists a ReLU NN
  $\tilde\bG_N$ of size $O(N)$ such that
  \begin{subequations}\label{eq:toshow}
  \begin{equation}
    \sup_{a\in B_r(H^\alpha(\bbT^d))}\norm[H^{t_0}(\bbT^d)]{\cG(a)-\cD\circ\tilde\bG_N\circ\cE(a)}
    \le C N^{-R+\delta}
  \end{equation}
  where
  \begin{equation}
    R = \begin{cases}
      \frac{\alpha}{d}-\frac{3}{2} &\text{if }\alpha\in (\frac{3d}{2},\frac{3d}{2}+1-t_0]\\
      \frac{\alpha+1-t_0}{2d}-\frac{3}{4} &\text{if }\alpha > \frac{3d}{2}+1-t_0,
    \end{cases}
  \end{equation}
\end{subequations}
and where $\cE$, $\cD$ are as in \eqref{eq:ED} with the spaces/ONBs in \eqref{eq:cXscYt_torus} with $t_0$ from above and
\begin{equation*}
  s_0 =
  \begin{cases}
        \frac{d}{2}+\frac{\delta}{2} &\text{if }\alpha\in (\frac{3d}{2},\frac{3d}{2}+1-t_0]\\
        \frac{\alpha+t_0-\frac{d}{2}-1}{2} &\text{if }\alpha > \frac{3d}{2}+1-t_0.
        \end{cases}
    \end{equation*}
\end{proposition}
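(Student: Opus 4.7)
The plan is to apply Theorem~\ref{thm:main} to $\cG$ in the setup of Section~\ref{sec:cXscYt_torus}, where $\cX^s=H^{s_0+sd}(\bbT^d)$, $\cY^t=H^{t_0+td}(\bbT^d)$, and $w_\bsj=\max\{1,|\bsj|\}^{-d}$. The first step is to embed the input ball into a hypercube: for $a\in B_r(H^\alpha(\bbT^d))$, the Fourier coefficients $c_\bsj:=\langle a,\xi_\bsj\rangle_{L^2}$ satisfy the pointwise bound $|c_\bsj|\le r\max\{1,|\bsj|\}^{-\alpha}$, so that
\[
|\cE(a)_\bsj|=|c_\bsj|\max\{1,|\bsj|\}^{s_0}\le r\max\{1,|\bsj|\}^{-(\alpha-s_0)}=rw_\bsj^{s},\qquad s:=\frac{\alpha-s_0}{d},
\]
and hence $B_r(H^\alpha(\bbT^d))\subseteq C_r^s(\cX)$.

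Next I would verify Assumption~\ref{ass:1} on $C_r^s(\cX)$ with this $s$ and $t:=(s_0+1-t_0)/d$. In both cases $s_0>d/2$, so $H^{s_0}(\bbT^d)\hookrightarrow L^\infty(\bbT^d)$ and $\|a\|_{L^\infty}$ is uniformly bounded on $C_r^s(\cX)$. Combined with \eqref{eq:u_bar}, this lets us choose a bounded open neighborhood $O_\C\subseteq\cX_\C$ of $C_r^s(\cX)$ on which $|\bar a+\tilde a|\ge a_{\min}/2$ uniformly. On $O_\C$ the coefficient-to-solution map extends holomorphically to $\cG:O_\C\to H^{s_0+1}(\bbT^d;\bbC)=\cY_\C^t$ with a uniform norm bound: invertibility of the complex divergence-form operator follows from a Lax--Milgram argument for the sesquilinear form $\int_{\bbT^d}(\bar a+\tilde a)\nabla u\cdot\overline{\nabla v}$, and the $H^{s_0+1}$ regularity gain from a bootstrap exploiting that $H^{s_0}$ is a pointwise multiplier algebra for $s_0>d/2$ together with $f\in C^\infty$. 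Theorem~\ref{thm:main} then produces a ReLU NN $\tilde\bG_N$ of size $O(N)$ with sup-norm error $O(N^{-\min(s-1,t)+\delta})$ on $C_r^s(\cX)$, hence on $B_r(H^\alpha(\bbT^d))$.

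It remains to choose $s_0$ so that $\min(s-1,t)=R$ in each case, keeping in mind the constraints $s_0>d/2$ (holomorphy) and $s>1$ (Theorem~\ref{thm:main}). For Case 1 ($\alpha\le 3d/2+1-t_0$) take $s_0=d/2+\delta/2$; then $s-1=\alpha/d-3/2-O(\delta)$ and $t=1/2+(1-t_0)/d+O(\delta)$, and $\alpha\le 3d/2+1-t_0$ forces $s-1\le t$, so $\min=s-1\approx R=\alpha/d-3/2$. For Case 2 ($\alpha>3d/2+1-t_0$) take $s_0=(\alpha+t_0-d/2-1)/2$; a direct computation gives $s-1=(\alpha+1-t_0)/(2d)-3/4=R$ and $t=(\alpha+1-t_0)/(2d)-1/4$, so again $\min=s-1=R$. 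In both cases the constraint $s_0>d/2$ is equivalent to the given range of $\alpha$, and $s>1$ is easily checked. The main obstacle will be the simultaneous holomorphy and uniform $H^{s_0+1}_\C$-regularity of $\cG$ on $O_\C$ for fractional $s_0$ just above $d/2$: this needs complex-coefficient elliptic regularity on fractional Sobolev spaces together with a careful use of the multiplier/algebra property of $H^{s_0}$ to pass from the basic energy estimate in $H^1$ to the target regularity $H^{s_0+1}$, uniformly in the complex parameter.
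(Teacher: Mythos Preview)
Your proposal is correct and follows the same overall strategy as the paper: verify Assumption~\ref{ass:1} in the Fourier setup of Section~\ref{sec:cXscYt_torus}, apply Theorem~\ref{thm:main} (the paper invokes Corollary~\ref{cor:ball}, which amounts to the same thing via the inclusion $B_r(\cX^s)\subseteq C_r^s(\cX)$ that you spell out explicitly), and then insert the stated $s_0$ and check the constraints $s_0>d/2$, $s>1$.

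The one substantive difference is in the regularity step establishing the admissible range of $t$. The paper passes through H\"older spaces: it uses the Sobolev embedding $H^{s_0}\hookrightarrow C^\gamma$ for $\gamma<s_0-\tfrac{d}{2}$, then Schauder estimates $C^\gamma\to C^{1+\gamma}\hookrightarrow H^{1+\gamma}$, arriving at $t<(1+s_0-\tfrac{d}{2}-t_0)/d$. You instead invoke directly that $H^{s_0}(\bbT^d)$ is a multiplicative algebra for $s_0>d/2$ and bootstrap the divergence-form equation to get the full shift $\cG:H^{s_0}\to H^{s_0+1}$, i.e.\ $t=(s_0+1-t_0)/d$, which is larger by exactly $1/2$. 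With the prescribed $s_0$ in either case one finds $s-1\le t$, so $\min\{s-1,t\}=s-1=R$ and the two routes yield the same stated rate; your argument is tighter and would in fact allow a slightly larger $R$ if $s_0$ were re-optimised, but that is not what the proposition claims. The obstacle you flag at the end---uniform complex-coefficient elliptic regularity in fractional Sobolev scales on $O_\C$---is genuine but standard; the paper dispatches it by citing the Agmon--Douglis--Nirenberg a-priori estimates, and your bootstrap via the algebra property of $H^{s_0}$ is an equally valid route.
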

\begin{proof}
  {\bf Step 1.} We verify Assumption \ref{ass:1}.
  By
  classical elliptic regularity (Schauder estimates, for
  second order divergence-form linear elliptic equations, also with
  complex-valued coefficients,
  e.g., \cite[pg. 625]{ADNI}, \cite[Sec. 2]{ADNII}) %
  it holds for
  all $\gamma>0$
\begin{equation*}
 \cG:\set{a\in C^{\gamma}(\bbT^d)}{\bar a + a\text{ satisfies \eqref{eq:u_bar}}}
  \to C^{1+\gamma}(\bbT^d)\hookrightarrow H^{1+\gamma}(\bbT^d),
\end{equation*}
and that $\cG(a)$ is bounded on bounded subsets of
$\set{a\in C^{\gamma}(\bbT^d)}{\bar a + a\text{ satisfies
    \eqref{eq:u_bar}}}$.

Thus, if
\begin{equation}\label{eq:const1}
  s_0>\frac{d}{2},
\end{equation}
{using that for any
$\gamma\in [0,s_0-\frac{d}{2})$ holds the Sobolev embedding}
$H^{s_0}\hookrightarrow C^{\gamma}$, we find
\begin{equation*}
  \cG:\set{a\in H^{s_0}}{\bar a+a\text{ satisfies \eqref{eq:u_bar}}}\to C^{1+\gamma}\qquad \forall \gamma\in \Big[0,s_0-\frac{d}{2}\Big).
\end{equation*}
We require \eqref{eq:const1} to ensure
$H^{s_0}(\bbT^d)\hookrightarrow L^\infty(\bbT^d)$, which is
necessary in order for $\cG$ to be well-defined, see Section
\ref{sec:torusG}.
In addition,
\begin{equation*}
  C^{1+\gamma}(\bbT^d)\hookrightarrow H^{1+\gamma}(\bbT^d) =\cY^t
\end{equation*}
with $t\ge 0$ such that $t_0+td=1+\gamma$, i.e.\
$t=\frac{1+\gamma-t_0}{d}$ ($t\ge 0$ holds since by assumption
$t_0\le 1 \le 1+\gamma$). With $\cX=H^{s_0}(\bbT^d)$ this shows
\begin{equation*}
  \cG:\set{a\in \cX}{\bar a+a\text{ satisfies \eqref{eq:u_bar}}}\to \cY^t\qquad
  \forall t\in \Big[0,\frac{1+s_0-\frac{d}{2}-t_0}{d}\Big)
\end{equation*}
and for fixed $t$ the map is bounded on bounded subsets of
$\set{a\in \cX}{\bar a+a\text{ satisfies \eqref{eq:u_bar}}}$.

Next, if $s>1$, $\tilde C_r^s(\cX)$ (which is equal to $C_r^s(\cX)$ by
Rmk.~\ref{rmk:riesz}) is in particular a bounded subset of $\cX\hookrightarrow L^\infty(\bbT^d)$
(cp.~Rmk.~\ref{rmk:Cs_in_Xs_prime}). 
Hence, for example by
\cite[Proposition 1.2.33 and Example 1.2.38]{JZDiss} 
there exists an open complex set $O_\bbC\subset\cX_\bbC$ containing
$\tilde C_r^s(\cX)$ such that due to $t_0\in [0,1]$,
\begin{equation}
\cG:O_\bbC\to H^1(\bbT^d,\C)\hookrightarrow\cY=H^{t_0}(\bbT^d)
\end{equation}
is holomorphic. Furthermore, it follows from the a-priori estimate
\cite[Theorem 9.3]{ADNII}, which is also valid for
\eqref{eq:DiffPeriodic} with periodic boundary conditions, by
combining the $\bbT^d$-periodicity of solutions w.r. to ${\rm Re}x$
with the ``hemisphere'' a-priori bounds in \cite[Theorem 9.2]{ADNII}
on each face of $\bbT^d$, that

\begin{equation}
  \cG:O_\bbC\to \cY^t\qquad\forall t\in \Big[0,\frac{1+s_0-\frac{d}{2}-t_0}{d}\Big)
\end{equation}
is bounded. 

{\bf Step 2.} We conclude the proof. 
According to
Cor.~\ref{cor:ball}, for $s>1$ and
$t\in (0,\frac{1+s_0-\frac{d}{2}-t_0}{d})$ we have with
$\cX^s=H^{s_0+sd}(\bbT^d)$, $\cY=H^{t_0}(\bbT^d)$ and for all $\delta>0$
(arbitrarily small)
\begin{equation*}
  \sup_{a\in B_r(H^{s_0+sd}(\bbT^d))}\norm[H^{t_0}]{\cG(a)-\cD(\tilde\bG_N(\cE(a)))}\le C N^{-\min\{s-1,t\}+\delta}.
\end{equation*}
Substituting $\alpha=s_0+sd$, i.e.\ $s=\frac{\alpha-s_0}{d}$,
and taking the maximal $t$ this reads
\begin{equation*}
  \sup_{a\in B_r(H^{\alpha}(\bbT^d))}\norm[H^{t_0}]{\cG(a)-\cD(\tilde\bG_N(\cE(a)))}\le C N^{-\min\{\frac{\alpha-s_0}{d}-1,\frac{1+s_0-\frac{d}{2}-t_0}{d}\}+\delta}.
\end{equation*}
The constraint $s>1$ implies the constraint $\alpha>d+s_0$ on
$\alpha$. We are still free to choose $s_0>\frac{d}{2}$, and wish to
do so in order to maximize the resulting convergence rate.
Solving
\begin{equation*}
  \frac{\alpha-s_0}{d}-1 = \frac{1+s_0-\frac{d}{2}-t_0}{d}
\end{equation*}
for $s_0$ we have
\begin{equation}\label{eq:s0}
  s_0 = \frac{\alpha+t_0-\frac{d}{2}-1}{2}.
\end{equation}
The constraint $s_0>\frac{d}{2}$ implies the constraint
$\alpha>\frac{3d}{2}+1-t_0$.

We therefore now distinguish between two cases. First, if
$\alpha\in (\frac{3d}{2},\frac{3d}{2}+1-t_0]$, then we let
$s_0:=\frac{d}{2}+\eps$ for some small $\eps>0$ ($\eps>0$ small enough
implies in particular that $\alpha>d+s_0$). In this case we obtain, up
to some arbitrarily small $\delta>0$, the convergence rate
\begin{equation*}
  \min\Big\{\frac{\alpha-\frac{d}{2}}{d}-1,\frac{1+\frac{d}{2}-\frac{d}{2}-t_0}{d}\Big\}
  =\min\Big\{\frac{\alpha}{d}-\frac{3}{2},\frac{1-t_0}{d}\Big\}=\frac{\alpha}{d}-\frac{3}{2},
\end{equation*}
where we used that $\alpha\le \frac{3d}{2}+1-t_0$ for the last equality.

Next assume $\alpha>\frac{3d}{2}+1-t_0$
and define $s_0$ as in \eqref{eq:s0}.
The constraint $\alpha>d+s_0$ is then equivalent to
\begin{equation*}
  \alpha>d+\frac{\alpha+t_0-\frac{d}{2}-1}{2}
  \qquad\Leftrightarrow\qquad
  \alpha>\frac{3d}{2}+t_0-1,
\end{equation*}
which already holds since $\alpha>\frac{3d}{2}+1-t_0
\ge \frac{3d}{2}+t_0-1$ for all $t_0\in [0,1]$.
The convergence rate amounts in this case to
\begin{equation*}
  \frac{\alpha-s_0}{d}-1 = \frac{\alpha+1-t_0}{2d}-\frac{3}{4}.
\end{equation*}
This shows \eqref{eq:toshow}.
\end{proof}

The above proposition is based on Cor.~\ref{cor:ball}. Applying
instead Thm.~\ref{thm:main}, one obtains for example that
for all $s>1$, $s_0>\frac{d}{2}$, $t_0\in [0,1]$,
with $\cX=H^{s_0}(\bbT^d)$ %
and for $\delta>0$ fixed but arbitrarily small
(cp.~Step 2 in the proof of Prop.~\ref{prop:torus})
\begin{equation*}
  \sup_{a\in C_r^s(\cX)}\norm[H^{t_0}(\bbT^d)]{\cG(a)-\cD(\tilde\bG_N(\cE(a)))}\le C N^{-\min\{s-1,\frac{1+s_0-\frac{d}{2}-t_0}{d}\}+\delta}.
\end{equation*}
Similarly, Thm.~\ref{thm:main_riesz} gives an improved $L^2$-type
error estimate, and Thm.~\ref{thm:main2} gives a convergence
{rate bound for spectral} operator surrogates.

{
\subsection{Diffusion equation on a polygonal domain}
Denote in the following by $D \subset \bbT^2\simeq [0,1]^2$
a convex polygonal domain with positive distance
from the boundary $\partial [0,1]^2$ of the unit cube.

\subsubsection{Operator $\cG$}
Similar to Section \ref{sec:torusG}, given a nominal
coefficient $\bar{a}\in L^\infty(\bbT^2)$, a diffusion coefficient
$a\in L^\infty(\bbT^2)$, and a source $f\in H^{-1}(D)$, we
wish to find $u\in H^1(D)$ such that 
\begin{equation} \label{eq:DiffPeriodic2}
 - \nabla \cdot ((\bar{a} + a )\nabla u) = f 
 \text{ on } D\qquad\text{and}\qquad
 u|_{\partial D}\equiv 0
\end{equation}
in a weak sense. Assuming
\begin{equation}\label{eq:u_bar2}
    \essinf_{x\in D} (\bar{a}(x)+a(x)) > a_{\min},
\end{equation}
for some constant $a_{\min}>0$, it follows by the Lax-Milgram Lemma
that \eqref{eq:DiffPeriodic} has a unique solution
$u\in H_0^1(D)$ that we denote by $\cG(a):=u$. Thus $\cG$ is a
well-defined map from
$\set{a\in L^\infty(\bbT^2)}{\text{\eqref{eq:u_bar2} holds}}\to
H_0^1(D)\hookrightarrow H^1(D)$.

\subsubsection{$\cX^s$ and $\cY^t$}\label{sec:cXcYpolygon}
For the input space, we use the same representation as in Section \ref{sec:cXscYt_torus}. 
That is, for some $s_0\ge 0$ (to be chosen later) we let 
with \eqref{eq:xibsj} for all $\bsj\in\N_0^2$
\begin{equation}\label{eq:cXpolygon}
  \cX:=H^{s_0}(\bbT^2),\qquad\psi_\bsj:=\max\{1,|\bsj|\}^{-s_0}\xi_\bsj,
\end{equation}
so that $\bsPsi_\cX:=(\psi_{\bsj})_{\bsj\in\N_0^2}$
forms an ONB of $\cX$. 
With the weight sequence
\begin{equation*}
  w_\bsj^{\cX}:=\max\{1,|\bsj|\}^{-d}\qquad\bsj\in\N_0^2,
\end{equation*}
this then amounts to
\begin{equation*}
  \cX^{s} = H^{s_0+2s}(\bbT^2)
\end{equation*}
as explained in Section \ref{sec:cXscYt_torus}.

For the output space, we cannot resort to Fourier representations of
the solution $u$ of \eqref{eq:DiffPeriodic2}, since the underlying
domain is not the torus. Therefore we employ a frame
representation. Specifically, the authors in
\cite{DavydovStevenson2006} provide an explicit construction of
functions $\hat\eta_{n,j}$, $j\in J_n$, $n\in\N$, 
for certain index sets $J_n\subseteq\N$; 
their cardinality is bounded according to 
\begin{equation}\label{eq:9n}
  |J_n|\lesssim 9^n\qquad\forall n\in\N,
\end{equation}
as a consequence of the particular basis construction in
\cite{DavydovStevenson2006}, which is based on the successive
subdivision of quadrilaterals into nine sub-quadrilaterals (see
\cite[Section 2]{DavydovStevenson2006}). For all $t\in (0,3/2)$,
it is shown in \cite{DavydovStevenson2006} that the sequence
$(3^{-tn}\hat\eta_{n,j})_{n,j}$ constitutes a Riesz basis of
$H^{1+t}(D)$.  In particular, every element of $H^{1+t}$ can be
expanded (uniquely) in terms of the $\hat\eta_{n,j}$ and
\begin{equation}\label{eq:H1t}
  \normlr[H^{1+t}(D)]{\sum_{n,j}c_{n,j}\hat\eta_{n,j}}^2\simeq \sum_{n,j}3^{2tn}c_{n,j}^2.
\end{equation}
For details see \cite{DavydovStevenson2006}, and in particular
the top of page 389 in that reference.

Introducing the weight sequence
\begin{equation*}
  w_{n,j}^{\cY}:=3^{-2n}\qquad j\in J_n,~n\in\N,
\end{equation*}
we note that for any $\eps>0$ due to \eqref{eq:9n}
\begin{equation*}
  \sum_{n,j}(w_{n,j}^{\cY})^{1+\eps}
  = \sum_{n\in\N} \sum_{j\in J_n} 3^{-2n(1+\eps)}
  \le \sum_{n\in\N} 3^{-2n(1+\eps)+2n}
  =\sum_{n\in\N} 3^{-2\eps n}
  <\infty
\end{equation*}
as required.

We fix in the following $\delta>0$ and let
\begin{equation}\label{eq:basispolygon}
  \eta_{n,j}:=\hat\eta_{n,j} (w_{n,j}^{\cY})^{\delta/2}
\end{equation}
and
\begin{equation*}
  \cY = \setlr{\sum_{n,j}c_{n,j}\eta_{n,j}}{{\normlr[\cY]{\sum_{n,j}c_{n,j}\eta_{n,j}}^2}:=\sum_{n,j}c_{n,j}^2<\infty}.
\end{equation*}
Due to \eqref{eq:H1t} it then holds
\begin{align}\label{eq:cYpolygon}
  \cY &= \setlr{\sum_{n,j}c_{n,j} (w_{n,j}^{\cY})^{\delta/2}\hat\eta_{n,j}}{\sum_{n,j}c_{n,j}^2<\infty}\nonumber\\
  &= \setlr{\sum_{n,j}d_{n,j} \hat\eta_{n,j}}{\sum_{n,j}d_{n,j}^2(w_{n,j}^{\cY})^{-\delta}<\infty}
  = H^{1+\delta}(D).
\end{align}
Similarly, for $t\in (0,3/4-\delta/2)$ we have due to \eqref{eq:H1t}
\begin{align}\label{eq:cYt}
  \cY^t&=\setlr{\sum_{n,j}c_{n,j}\eta_{n,j}}{\sum_{n,j}c_{n,j}^2 (w_{n,j}^{\cY})^{-2t}<\infty}\nonumber\\
  &=\setlr{\sum_{n,j}d_{n,j}\hat\eta_{n,j}}{\sum_{n,j}c_{n,j}^2 (w_{n,j}^{\cY})^{-2t-\delta}<\infty}
  = H^{1+2t+\delta}(D).
\end{align}

\subsubsection{Coefficient-to-solution surrogate rates}

Analogous to Proposition \ref{prop:torus}, we 
now discuss a convergence result for the approximation of the solution
operator $\cG$ (corresponding to the PDE \eqref{eq:DiffPeriodic2} on
the convex polygonal domain $D\subseteq [0,1]^2$), for all diffusion
coefficients in a Sobolev ball. The encoder $\cE$ is as in
\eqref{eq:ED}, w.r.t.\ the space and orthonormal
bases in \eqref{eq:cXpolygon}. Here $s_0\ge 0$ is still at our disposal.
The decoder $\cD$ is also as in \eqref{eq:ED}, with respect to the
Riesz basis $(\eta_{n,j})_{n,j}$ of $\cY$.

The next theorem gives essentially the same convergence rates as
Proposition \ref{prop:torus}, with two restrictions: 
\begin{itemize}
\item[(i)] 
Since $D$
is (convex) polygonal, the solution $\cG(a)=u$ of
\eqref{eq:DiffPeriodic2} belongs in general at best to $H^2(D)$, so
unlike for the PDE in Section \ref{sec:DiffEqTorus} posed on the
torus, we cannot get arbitrarily high algebraic operator emulation
rates. Therefore we assume $\alpha\le 5$ in the following.
\item[(ii)] 
Since the Riesz basis from
Section \ref{sec:cXcYpolygon} is \emph{not} stable in $H^{t_0}$ for
$t_0\in [0,1]$, we only measure the error in $\cY\hookrightarrow H^1(D)$,
cp.~\eqref{eq:cYpolygon}.
\end{itemize}

\begin{proposition}\label{prop:polygon}
  Assume $f\in C^\infty(D)$. Let $\alpha\in (3,5]$,
  $r>0$, $a_{\min}>0$ and let $\delta>0$ (arbitrarily small). Suppose
  that $\bar a+a$ satisfies \eqref{eq:u_bar} for all
  $a\in B_r(H^{\alpha}(D))$.

  Then there exists a constant $C>0$ and for all $N\in\N$ there exists
  a ReLU NN $\tilde\bG_N$ of size $O(N)$ such that
  \begin{subequations}\label{eq:toshowpolygon}
  \begin{equation}
    \sup_{a\in B_r(H^\alpha(\bbT^2))}\norm[H^{1}(D)]{\cG(a)-\cD\circ\tilde\bG_N\circ\cE(a)}
    \le C N^{-R+\delta}
  \end{equation}
  where the expression rate $R$ is given by
  \begin{equation}
    R = \frac{\alpha-3}{4},
  \end{equation}
\end{subequations}
and the encoder $\cE$ is as in \eqref{eq:ED} and \eqref{eq:cXpolygon} with
$s_0 =\frac{\alpha-1}{2}$, 
and the decoder $\cD$ is as in \eqref{eq:ED} and \eqref{eq:basispolygon}.
\end{proposition}
\begin{proof}
  {\bf Step 1.} We verify Assumption \ref{ass:1}. The argument
  is analogous to Step 1 of the proof of Prop.~\ref{prop:torus}.

  By
    {\cite[Lemma 5.2]{TSGU2013} (see also \cite[Theorem  9.1.12]{hackbusch2010Elldifequ})
   for all $\bar\gamma, \gamma\in (0,1]$ such that $\bar\gamma>\gamma$}
\begin{equation*}
 \cG:\set{a\in C^{\bar\gamma}(\bbT^2)}{\bar a + a\text{ satisfies \eqref{eq:u_bar2}}}
  \to H^{1+\gamma}(\bbT^2),
\end{equation*}
and $\cG(a)$ is bounded on bounded subsets of
$\set{a\in C^{\bar\gamma}(\bbT^2)}{\bar a + a\text{ satisfies
    \eqref{eq:u_bar2}}}$.

Thus, if
\begin{equation}\label{eq:const12}
  s_0\in (1,2],
\end{equation}
  using that for any
$\bar\gamma\in [0,s_0-1)$ holds
$H^{s_0}(\bbT^2)\hookrightarrow C^{\bar\gamma}(\bbT^2)$, we find
\begin{equation*}
  \cG:\set{a\in H^{s_0}(\bbT^2)}{\bar a+a\text{ satisfies \eqref{eq:u_bar}}}\to H^{1+\gamma}(D)\qquad \forall \gamma\in [0,s_0-1).
\end{equation*}
In addition, for $\gamma\in(\delta,1]$ by \eqref{eq:cYt}
\begin{equation*}
  H^{1+\gamma}(D) =\cY^{t-\delta/2}
\end{equation*}
with $t=\gamma/2$. With $\cX=H^{s_0}(\bbT^2)$ this shows for $s_0\in (1,2]$
\begin{equation*}
  \cG:\set{a\in \cX}{\bar a+a\text{ satisfies \eqref{eq:u_bar2}}}\to \cY^{t-\delta/2}\qquad
  \forall t\in \Big (\frac{\delta}{2},\frac{s_0-1}{2}\Big)
\end{equation*}
and for fixed $t$ the map is bounded on bounded subsets of
$\set{a\in \cX}{\bar a+a\text{ satisfies \eqref{eq:u_bar2}}}$.

Next, if $s>1$, $\tilde C_r^s(\cX)$ (which is equal to $C_r^s(\cX)$ by
Rmk.~\ref{rmk:riesz}) is in particular a bounded subset of
$\cX\hookrightarrow L^\infty(\bbT^2)$
(cp.~Rmk.~\ref{rmk:Cs_in_Xs_prime}).  Hence by
\cite[Proposition 1.2.33 and Example 1.2.38]{JZDiss} there exists an
open complex set $O_\bbC\subset\cX_\bbC$ containing
$\tilde C_r^s(\cX)$ such that
\begin{equation*}
\cG:O_\bbC\to H^1(D,\C)
\end{equation*}
is holomorphic. Furthermore, it follows from the a-priori 
estimates in \cite[Chap.~III]{ADN1}
that
\begin{equation}\label{eq:regularityt2}
  \cG:O_\bbC\to \cY^{t-\delta/2}\qquad\forall t\in \Big(\frac{\delta}{2},\frac{s_0-1}{2}\Big)
\end{equation}
is bounded. 

{\bf Step 2.} We conclude the proof. According to Cor.~\ref{cor:ball},
\eqref{eq:cYpolygon} and \eqref{eq:regularityt2}, for $s>1$ and
$t\in (\frac{\delta}{2},\frac{s_0-1}{2})$ we have with
$\cX^s=H^{s_0+2s}(\bbT^2)$
\begin{align*}
  \sup_{a\in B_r(H^{s_0+2s}(\bbT^2))}\norm[H^{1}(D)]{\cG(a)-\cD(\tilde\bG_N(\cE(a)))}&\le  
                                                                                    \sup_{a\in B_r(H^{s_0+2s}(\bbT^2))}\norm[\cY]{\cG(a)-\cD(\tilde\bG_N(\cE(a)))}\nonumber\\
                                                                                     &\le C N^{-\min\{s-1,t-\delta/2\}+\delta/2}\nonumber\\
                                                                                     &\le CN^{-\min\{s-1,t\}+\delta}.
\end{align*}
Substituting $\alpha=s_0+2s$, i.e.\ $s=\frac{\alpha-s_0}{2}$,
and taking the maximal $t$ in \eqref{eq:regularityt2} this reads
\begin{equation*}
  \sup_{a\in B_r(H^{\alpha}(\bbT^2))}\norm[H^{1}]{\cG(a)-\cD(\tilde\bG_N(\cE(a)))}\le C N^{-\min\{\frac{\alpha-s_0}{2}-1,\frac{s_0-1}{2}\}+\delta}.
\end{equation*}
We are still free to choose $s_0\in (1,2]$, and wish to do so in
order to maximize the resulting convergence rate. Solving
\begin{equation*}
  \frac{\alpha-s_0}{2}-1 = \frac{s_0-1}{2}
\end{equation*}
for $s_0$ we have
\begin{equation*}
  s_0 = \frac{\alpha-1}{2}.
\end{equation*}
The constraint \eqref{eq:const12} on $s_0$ implies the constraint
$\alpha\in (3,5]$. The constraint $s>1$ implies the constraint
$\alpha>s_0+2$ which is automatically satisfied with this
choice of $s_0$. The convergence rate then amounts to
\begin{equation*}
  \frac{\alpha-s_0}{2}-1 = \frac{\alpha-3}{4}.
\end{equation*}
This shows \eqref{eq:toshowpolygon}.
\end{proof}

}
\section{Concluding Remarks and further developments}
\label{sec:Concl}
We established expression rate bounds for %
finite-parametric approximations to
nonlinear, holomorphic maps between scales of infinite-dimensional,
separable function spaces endowed with suitable stable, affine
representation systems such as frames. Our approximations are based
on combining a linear input encoder with suitable, finite-parametric
surrogates $\{\tilde{\bG}_N\}_N$ of a countably-parametric map
transforming coefficient sequences from the input encoder into
corresponding sequences for the output decoder.

While being of independent, mathematical interest, 
the present results open a perspective of `refactoring' 
key parts of widely used scientific computing methods.
We mention only Schur-complement (or Dirichlet-to-Neumann) maps 
for elliptic PDEs with variable coefficients which constitute,
in discretized form, a key component in many algorithms of
scientific computation. 

A further, broad range of applications for the considered operator 
surrogates is 
\emph{efficient numerical realization of domain-to-solution maps} 
for elliptic and parabolic PDEs. 
Upon pullback onto one common, canonical 
reference domain, physical domain shapes are encoded in variable
coefficients of the transformed PDE, and the domain-to-solution
map is equivalent to the coefficient-to-solution map.
Such maps feature the holomorphy required 
for the presently developed theory 
(e.g. 
\cite{CSZ18_2319} for Navier-Stokes equations,
\cite{HS21_2779} for nonlocal (boundary) integrodifferential operators,
\cite{JSZ17_2339} for time-harmonic Maxwell equations
). 
We mention \cite{RegaPagaAQ22} for a recent application to
deep NNs in computational physiology.
 
The main results, Theorems~\ref{thm:main} and \ref{thm:main2}, 
considered in detail the emulation of holomorphic maps $\cG$
by either {ReLU activated} NNs 
or 
by novel generalized polynomial chaos operator surrogates. 
The latter class of surrogate operators allows, in
particular, for \emph{efficient deterministic construction 
     w.r.\ to the number of the encoded input parameters}.

{As observed in \cite[Prop.~3.7]{SchwabZechAA}, 
the presently proved approximation rates by strict ReLU NNs
can also be expected for 
other neural architectures: 
non-ReLU activations satisfying the `usual' assumptions will also suffice.
As shown in \cite{stanojevic2022exact} 
strict ReLU operator expression rate bounds as shown herein
will imply the same rates for so-called spiking neural networks,
which are prototypical neuromorphic computing models.
}

The
presently developed technical tools also accomodate other
approximation architectures for the high-dimensional, 
parametric surrogate map
$\tilde{\bG}_N$ in \eqref{eq:GenForm}, e.g.  tensor-networks or
multipole operators (e.g. \cite{jin2022mionet}).

While the present results are limited to the case of bounded parameter
ranges in the basis representations of admissible input data from the spaces $\cX^s$, 
expression rates for inputs subject to a Gaussian measure on the input spaces $\cX^s$ 
will 
require admitting unbounded parameter ranges of encoded input data. 
{
Here, similar results are conceivable, but will require 
\emph{ReLU DNN emulations of Wiener polynomial chaos expansions} 
as in \cite{SZ21_982}, \cite{DNSZ23_2957}.
}

{
The surrogates $\tilde{\cG}_N$ in \eqref{eq:GenForm} were based on
\emph{linear en- and decoders}. Significant quantitative improvements
are achieveable by \emph{nonlinear encoding and decoding}. 
For example, transformer-based emulators $\cE$ 
as proposed e.g. in \cite{li2022transformer,cao2021choose} 
or manifold-decoders $\cD$ such as NOMAD in \cite{seidman2022nomad}.
}
 
Our analysis exploited the quantified holomorphy of the
function space map $\calG$ (or its countably-parametric version $\bG$)
in an essential way; while at first
sight, this may seem restrictive, in recent years large classes of
maps of engineering interest have been identified which admit this
property.  We only mention \cite{CSZ18_2319} for the stationary
Navier-Stokes equations, \cite{JSZ17_2339} for time-harmonic Maxwell
equations and \cite{HS21_2779} for shape to boundary integral operator
maps. Both, generalization error bounds and the work bounds do not
incur the curse of dimensionality, which enters in straightforward
application of classical approximation results.

The discussed gpc surrogate operator constructions assumed
availability of noise-{less} evaluations of
$\dup{\cG(a)}{\tilde{\eta}_j}$ in at most $N$ pairs of
input-output ``snapshots'' in $C_r^s(\cX)\times \tilde{\bsPsi}_\cY$. 
Accounting for effects of
``noisy'' evaluations of these functionals, 
e.g.\ through numerical discretizations, 
will be considered elsewhere.

\begin{acknowledgement}
  This work was completed while JZ visited MIT in April and May 2022,
  and while the authors attended the Erwin Schr\"odinger Institute,
  Vienna, Austria, during the ``ESI Thematic Period on Uncertainty
  Quantification'' in May and June 2022.  Excellent working conditions
  at these institutions are gratefully acknowledged.
\end{acknowledgement}

\bibliographystyle{abbrv}
\bibliography{mybibfile}
\end{document}